\newtheorem{theorem}{Theorem}
\newtheorem{lemma}[theorem]{Lemma}
\newtheorem{remark}[theorem]{Remark}
\newtheorem{corollary}[theorem]{Corollary}
\newtheorem{proposition}[theorem]{Proposition}
\newtheorem{example}[theorem]{Example}
\begin{document}
\newcommand{\mC}{\mathbb{C}}
\newcommand{\tto}{\twoheadrightarrow}
\newcommand{\mathttD}{R}
\newcommand{\la}{\lambda}
\newcommand{\ov}{\overline}
\newcommand{\mZ}{\mathbb{Z}}
\newcommand{\cO}{\mathcal{O}}
\newcommand{\HOM}{\operatorname{Hom}}
\newcommand{\END}{\operatorname{End}}

\title{Cuspidal $\mathfrak{sl}_n$-modules and deformations
of certain Brauer tree algebras}
\author{Volodymyr Mazorchuk and Catharina Stroppel}
\date{\today}

\maketitle

\begin{abstract}
We show that the algebras describing blocks of the category of cuspidal
weight (respectively generalized weight) $\mathfrak{sl}_n$-modules
are one-parameter (respectively multi-parameter) deformations of
certain Brauer tree algebras. We explicitly determine these 
deformations both graded and ungraded. The 
algebras we deform also appear as special centralizer subalgebras
of Temperley-Lieb algebras or as generalized Khovanov algebras.
They show up in the context of highest weight representations of 
the Virasoro algebra, in the context of rational representations of 
the general linear group and  Schur algebras and in the study of 
the Milnor fiber of Kleinian singularities. 
\end{abstract}

\tableofcontents

\section{Introduction and description of the results}\label{s0}

Weight modules, or, more generally, generalized weight modules, 
play an important role in the representation theory of semisimple 
complex Lie algebras. One reason for their importance is the fact 
that if we further assume the (generalized) weight spaces to be
finite dimensional, then simple modules can be classified. No doubt, 
this classification theorem (finally completed by O.~Mathieu in \cite{Mat}) 
marks a major breakthrough in the field.  Already from the results of
S.~Fernando (\cite{Fe}) and V.~Futorny (\cite{Fu}) it was known that
simple weight modules with finite dimensional weight spaces
fall into two types:
\begin{itemize}
\item the so-called {\em cuspidal}
modules, that is the ones which are not parabolically induced modules or equivalently on which all root vectors of the Lie algebra
act bijectively (\cite[Cor 1.4, Cor 1.5]{Mat}); and
\item the simple quotients of generalized Verma modules,
parabolically induced from cuspidal modules.
\end{itemize}
The second type forms the bulk of simple weight modules (and also of the 
literature on weight modules); they are easy to classify, and their structure 
and Kazhdan-Lusztig type combinatorics is now relatively well understood, see 
\cite{MS}, \cite{BFL}, \cite{Maz3} and references therein.
From \cite{Fe} (see also \cite[Prop. 1.6]{Mat}) it is known that cuspidal
modules only exist for the Lie algebras $\mathfrak{sl}_n$ (type $A$)
and  $\mathfrak{sp}_{2n}$ (type $C$), and it is the classification of
simple cuspidal modules for these two series of Lie algebras, which was
completed by Mathieu in \cite{Mat}.

The next natural step is thus to describe and understand the category 
of cuspidal (generalized) weight modules. In \cite{BKLM} it was shown 
that for the algebra $\mathfrak{sp}_{2n}$ the category of cuspidal 
weight modules is semi-simple, hence completely understood. The very 
interesting rather recent paper \cite{GS} deals with the remaining cases 
of weight modules and describes blocks of the category of cuspidal 
weight modules for the algebra $\mathfrak{sl}_{n}$ in terms of modules 
over the path algebra of a quiver with relations.

This latter paper has been the inspiration and motivation for our work, 
but we want to go much further. We first reprove the main result from  
\cite{GS} by completely different methods, which also allow us to
extend it to the category of all generalized weight modules. Moreover,
we relate the associative algebras appearing in our description
to (multi-parameter) deformations of certain self-injective symmetric algebras. 
In the case of weight modules we obtain the universal one-parameter deformation 
of the centralizer subalgebra $A^{n-1}$ for a basic projective-injective 
module in a block of parabolic category $\mathcal{O}$ for $\mathfrak{sl}_n$.

We would like to emphasize that these algebras $A^{n}$ show up in various 
other contexts, for instance as subalgebras of Temperley-Lieb algebras, 
as special examples of generalized Khovanov algebras (\cite{BS1}), as 
special instances of Brauer tree algebras (\cite{Ho1}), in the context 
of highest weight representations of the Virasoro algebra (\cite{BNW}),
in the context of rational representations of the general linear group and 
Schur algebras (\cite{Xi}), in the study of the Milnor fiber of Kleinian 
singularities (\cite{KhSe}), as a convolution algebra (\cite{SW})  etc. 
It would be interesting to know which role our deformations play in these different contexts.

Our main results are then the following: let $\mathcal{C}$,  $\hat{\mathcal{C}}$ be the category of finitely generated, cuspidal, weight (respectively generalized weight) $\mathfrak{sl}_{n}$-modules  (for $n\geq 2$ fixed).

\begin{theorem}\label{thmmain}
\begin{enumerate}[(i)]
\item\label{thmmain.1} Every non-integral or singular block of 
$\mathcal{C}$ is equivalent
to the category of finite dimensional $\mathbb{C}[[x]]$-modules.
\item\label{thmmain.2} Every non-integral or singular block of $\hat{\mathcal{C}}$
is equivalent to the category of finite dimensional
$\mathbb{C}[[x_1,x_2,\dots,x_n]]$-modules.
\item\label{thmmain.3} For $n>2$ every integral regular block of 
$\mathcal{C}$ is equivalent to the category of finite dimensional 
modules over a flat one-parameter deformation of $A^{n-1}$ which is 
non-trivial as infinitesimal deformation, where $A^{n-1}$ is the path 
algebra of the following quiver with  $n-1$ vertices
\begin{displaymath}
\xymatrix{
\bullet\ar@/^/[r]^{a_1}&\ar@/^/[l]^{b_1}\bullet
\ar@/^/[r]^{a_2}&\ar@/^/[l]^{b_2}\ar@/^/[r]^{a_3}\bullet
&\cdots\ar@/^/[l]^{b_3}\ar@/^/[r]^{a_{n-2}}
&\ar@/^/[l]^{b_{n-2}}\bullet
}
\end{displaymath}
modulo the relations $a_{i+1}a_i=0=b_{i}b_{i+1}$ and $b_ia_i=a_{i-1}b_{i-1}$
(whenever the expression makes sense) in the case $n>3$ and
$a_1b_1a_1=0=b_1a_1b_1$ in the case $n=3$.
The path length induces a non-negative $\mZ$-grading on $A^{n-1}$. Then the deformation in question is the unique (up to rescaling of the deformation
parameter) non-trivial graded one-parameter deformation of $A^{n-1}$.
This deformation is the completion of a Koszul algebra with respect 
to the graded radical.
\item\label{thmmain.4} For $n>2$ every integral regular block of 
$\hat{\mathcal{C}}$ is equivalent to the category of finite dimensional 
modules over a flat $n$-parameter deformation of $A^{n-1}$. The associative
algebra of this deformation is isomorphic to the tensor product of the
deformation described in the previous claim \eqref{thmmain.3} and 
the algebra $\mathbb{C}[[x_2,x_3,\dots,x_n]]$.
\end{enumerate}
\end{theorem}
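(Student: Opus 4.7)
The plan is to reduce all four claims to computations in (deformed) parabolic category $\cO$ via Mathieu's twisted localization functor, and then to identify the resulting endomorphism algebras.

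For parts \eqref{thmmain.1} and \eqref{thmmain.2}, a non-integral or singular block of $\mathcal{C}$ contains, modulo the action of the coherent family, essentially one simple object $L$, so the block is equivalent to finitely generated modules over $\END(P)^{\mathrm{op}}$ where $P$ is the projective cover of $L$ in the appropriate pro-category. The algebra $\END(P)$ is controlled by self-extensions of $L$, i.e.\ by the directions along which the (generalized) central character of $L$ can be deformed while preserving cuspidality. For genuine weight modules there is a single such direction, yielding $\END(P)\cong \mC[[x]]$; for generalized weight modules the Cartan subalgebra contributes $n-1$ further unipotent directions, yielding $\END(P)\cong \mC[[x_1,\dots,x_n]]$.

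For \eqref{thmmain.3} and \eqref{thmmain.4}, I would fix the maximal parabolic $\mathfrak{p}\subset \mathfrak{sl}_n$ whose Levi is $\mathfrak{gl}_{n-1}$ and recall, following Mathieu, that every simple integral regular cuspidal module arises as the image under the twisted localization functor $T_\mu$ (for generic $\mu$) of a simple object in a regular block $\cO_0^{\mathfrak{p}}$ of parabolic category $\cO$. The key step is to show that $T_\mu$, after the source is replaced by a suitable pro-completion, induces an equivalence of abelian categories with the integral regular block of $\mathcal{C}$; this reproves the Grantcharov--Serganov description. The endomorphism algebra of the basic projective-injective in $\cO_0^{\mathfrak{p}}$ is known to be $A^{n-1}$ by the work of Stroppel and related sources, so the statement reduces to deforming this algebra correctly. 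One then replaces $\cO_0^{\mathfrak{p}}$ by its deformation over $\mC[[x]]$, with $x$ parameterizing the central character transverse to the relevant singular wall; the endomorphism algebra of the basic projective-injective in this deformed category is a flat $\mC[[x]]$-deformation of $A^{n-1}$, and $T_\mu$ transports it to the deformation claimed in \eqref{thmmain.3}. For \eqref{thmmain.4}, one additionally deforms along the remaining Cartan directions; these contribute a tensor factor $\mC[[x_2,\dots,x_n]]$ which is trivial on the $\mathfrak{sl}_n$-side but records the generalized weight information.

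Non-triviality, uniqueness and Koszulity of the one-parameter deformation in \eqref{thmmain.3} should be extracted from a direct Hochschild cohomology computation using the explicit Koszul resolution of $A^{n-1}$: one checks that $HH^2$ of $A^{n-1}$ has a one-dimensional space of non-trivial graded infinitesimal deformations in the appropriate internal degree, and that this class integrates to an honest deformation. The hardest point, and the technical heart of the argument, is establishing that the twisted localization really extends to an equivalence of \emph{deformed} abelian categories: essential surjectivity, full faithfulness on $\HOM$ and $\mathrm{Ext}$, and compatibility with the deformation base all have to be controlled simultaneously, and the abstract deformation appearing on the cuspidal side then has to be matched with the explicit universal graded deformation produced from the Hochschild computation on the $A^{n-1}$ side.
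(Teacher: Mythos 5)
Your outline identifies the right objects but leaves the actual mathematical content of all four parts unproven, and in two places the strategy as stated would run into concrete trouble. For \eqref{thmmain.1} and \eqref{thmmain.2}, knowing that the block has one simple object $L$ and that the ``deformation directions'' have dimension $1$ (resp.\ $n$) only shows that $\END(P)$ is a quotient of $\mC[[x]]$ (resp.\ topologically generated by $n$ elements); it does not show that it is commutative, nor that there are no relations, which is exactly what has to be proved. The paper spends all of Section~\ref{s2} on this: an explicit functor $\mathrm{F}$ built from the Britten--Lemire realization $N(\mathbf{a})$, fullness via the $\mathfrak{sl}_2$-Casimir, commutativity of the block algebra for $\mathfrak{sl}_3$ via the Gelfand--Zetlin subalgebra (Proposition~\ref{prop9}), and a delicate density/unique-extension argument by induction on $n$ using the Serre relations (Proposition~\ref{prop1001}, Lemmata~\ref{lem1002}, \ref{lem1003}). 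None of this is replaced by your Ext-counting remark; indeed in the paper the Ext computation (Corollary~\ref{cor16}) is a \emph{consequence} of Theorem~\ref{thm6}, not an input.

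For \eqref{thmmain.3} and \eqref{thmmain.4} you yourself flag the missing step: that twisted localization extends to an equivalence between a deformed/pro-completed parabolic category $\cO$ and the regular cuspidal block is asserted, not proved, and it is the technical heart of your route. Beyond being unproven, the identification ``endomorphism algebra of the basic projective--injective in the $\mC[[x]]$-deformed $\cO^{\mathfrak{p}}_0$ $=$ the deformation occurring here'' is genuinely dangerous: Remark~\ref{rem30} of the paper shows that such deformations behave badly under passing to centralizer subalgebras (already for $A^1$ one gets $\mathbb{C}[X]\otimes_{\mathbb{C}[X^2]}\mathbb{C}[X]$ rather than $\mathbb{C}[X]$), so you would have to verify which deformation you actually obtain. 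The paper instead works entirely on the cuspidal side: translation functors onto and out of the walls exhibit $\hat D_{\la,\xi}$ as a free rank-$(4n-2)$ module over the central subalgebra coming from the singular block (Proposition~\ref{freeness}, Lemma~\ref{Rcentral}), giving flatness and the identification of the special fibre with $A^{n-1}$ directly. Two further points: (a) non-triviality of the cuspidal deformation modulo $\mathfrak{m}^2$ cannot come out of a Hochschild computation on $A^{n-1}$ alone --- it needs a representation-theoretic input, supplied in the paper by the uniserial Loewy-length-four module of Lemma~\ref{lem092} and the computation $\alpha_1\beta_1\alpha_1\beta_1\neq 0$ in Lemma~\ref{lem091}; your sketch omits this entirely. (b) There is no ``Koszul resolution of $A^{n-1}$'': $A^{k}$ is not Koszul for $k>1$ (Remark~\ref{rem29}; for $k=2$ it is not even quadratic) --- it is the deformation $\hat B^{k}$ that is Koszul, and the paper obtains $\dim\mathbf{HH}^2(A^k)=1$ via derived equivalence with Brauer tree algebras rather than from a linear resolution. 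So while the Hochschild-uniqueness part of your plan matches the paper in spirit, the reduction of the Lie-theoretic blocks to that deformation problem --- the substance of Sections~\ref{s2} and~\ref{s3} --- is missing.
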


The paper is organized as follows: Section~\ref{s1} contains 
preliminaries on cuspidal modules. In Section~\ref{s2} we
prove the first two assertions of Theorem~\ref{thmmain}, that is
we describe all singular and non-integral blocks of the category of
cuspidal (generalized) weight modules. In Section~\ref{s3} we show that
regular blocks are described by deformations of the algebra $A^{n-1}$
over the algebra describing the singular blocks. Finally, in Section~\ref{s4} 
we describe the (graded and ungraded) deformation theory of these 
associative algebras in detail. We first give a summary of basic 
results on multi-parameter deformations of associative algebras 
(Subsections~\ref{s4.1}-\ref{s4.5}, this part is completely independent 
from the Lie theory) and then use it in Subsections~\ref{s4.6} to 
complete the proof of the main theorem.
\vspace{0.5cm}

\noindent
{\bf Acknowledgments.} Most of the research was carried during visits of 
the first author to Bonn University and of the second author to Uppsala 
University. We thank the Royal Swedish Academy of Sciences, the Swedish 
Research Council and the Hausdorff Center of Mathematics in Bonn for 
financial support. We would like to thank Carl Mautner for useful 
discussions and especially Thorsten Holm for very useful 
discussions and numerous explanations.

\section{Preliminaries on cuspidal modules}\label{s1}

All algebras and categories in this paper are assumed to be
over the field $\mathbb{C}$ of complex numbers. All functors
are additive and $\mathbb{C}$-linear. All unspecified homomorphisms and
tensor products are assumed to be over $\mathbb{C}$.
We denote by $\mathbb{N}$
and $\mathbb{Z}_+$ the set of positive and nonnegative integers,
respectively.

\subsection{Weight and generalized weight
$\mathfrak{sl}_n$-modules}\label{s1.1}

Fix $n\in\{2,3,\dots\}$ and consider the complex Lie algebra $\mathfrak{gl}_n$ spanned by the matrix units $e_{i,j}$, $i,j=1,2,\dots,n$. Let $\mathfrak{g}:=\mathfrak{sl}_n$ be the Lie subalgebra of matrices with 
trace zero. We have the standard triangular decomposition $\mathfrak{g}=\mathfrak{n}_-\oplus \mathfrak{h}\oplus \mathfrak{n}_+$
of $\mathfrak{g}$, i.e. the decomposition into a sum of strictly lower 
triangular, diagonal and strictly upper triangular matrices.
Let $\Delta\subset \mathfrak{h}^*$ be the set of roots of $\mathfrak{g}$ and $\rho$ the half-sum of all positive roots. If $\alpha\in \Delta$ then $\mathfrak{g}_{\alpha}$ denotes the root space of
$\mathfrak{g}$ corresponding to $\alpha$. Let $W$ denote the Weyl group
of $\Delta$. In the following $W$will act on $\mathfrak{h}^*$ via the
{\em dot action} defined by $w\cdot \lambda=w(\lambda+\rho)-\rho$.

For $i=1,\dots,n-1$ let $\alpha_i$ be the simple (positive) root 
with the coroot $h_i=e_{i,i}-e_{i+1,i+1}\in\mathfrak{h}^*$,
and let $s_i$ be the corresponding  simple reflection in $W$. For $\lambda\in \mathfrak{h}^*$ and $i\in\{1,2,\dots,n-1\}$ set
$\lambda_{i}=\lambda(h_i)$.  An element $\lambda\in \mathfrak{h}^*$ is
called a {\it weight}. It is {\em integral} provided that all $\lambda_{i}$'s
are integers. It is {\it singular}, if the cardinality of the stabilizer
$W_\lambda$ of $\lambda$ is at least two; it is {\it regular} otherwise.

A $\mathfrak{g}$-module $M$ is called a {\em weight module} or a
{\em generalized weight module} if
$M=\oplus_{\lambda\in\mathfrak{h}^*}M_{\lambda}$ or
$M=\oplus_{\lambda\in\mathfrak{h}^*}M^{\lambda}$, respectively, where
\begin{gather*}
M_{\lambda}:=\{v\in M\mid hv=\lambda(h)v\text{ for all }h\in\mathfrak{h}\},\\
M^{\lambda}:=\{v\in M\mid(h-\lambda(h))^kv=0\text{ for all }h\in\mathfrak{h}
\text{ and }k\gg 0\}.
\end{gather*}
Here $\lambda$ is called a {\em weight of $M$} with {\em weight space} $M_{\lambda}$ and {\em generalized weight space} $M^{\lambda}$. Obviously, every weight module is a generalized
weight module, whereas the converse need not to be true. However, from the defining relations for $\mathfrak{g}$ it follows that every simple generalized  weight module is in fact a weight module.
Note that $M_{\lambda}\subset M^{\lambda}$, moreover,
$M_{\lambda}\neq 0$ if and only if $M^{\lambda}\neq 0$.
The set $\mathrm{supp}(M):=\{\lambda\in
\mathfrak{h}^*\mid M_{\lambda}\neq 0\}$ is called the {\em support} of
$M$.  A weight module $M$ is called {\em pointed} provided that
$\dim M_{\lambda}= 1$ for some $\lambda$ (for indecomposable cuspidal 
modules the latter is equivalent to the requirement $\dim M_{\lambda}\leq 
1$ for all $\lambda$).  In the following we want to
assume (following \cite{Mat}) that all (generalized) weight spaces are 
finite dimensional. For examples of simple weight modules with
infinite dimensional weight spaces we refer to \cite{DFO}.
\begin{center}
\vspace{2mm}
\it{From now on: all generalized weight spaces are assumed to be finite dimensional.}
\vspace{2mm}
\end{center}
If $M$ is a simple weight $\mathfrak{g}$-module, then there exists
$\lambda\in \mathfrak{h}^*$ such that
$\mathrm{supp}(M)\subset \lambda+\mathbb{Z}\Delta$ (in fact, for
the latter it is enough to assume that $M$ is indecomposable). If
$\mathrm{supp}(M)=\lambda+\mathbb{Z}\Delta$, the $M$ is called
a {\em dense} module. By \cite{DMP}, if $M$ is a simple dense module,
then each nonzero element of any $\mathfrak{g}_{\alpha}$, $\alpha\in\Delta$,
acts bijectively on $M$. Hence any simple dense module is cuspidal (in the sense as defined in the introduction). Obviously, every simple cuspidal $\mathfrak{g}$-module is dense.

Both categories $\mathcal{C}$ and $\hat{\mathcal{C}}$, defined in the
introduction, are abelian categories and $\mathcal{C}$ is a full 
subcategory of $\hat{\mathcal{C}}$. Moreover, $\mathcal{C}$ and 
$\hat{\mathcal{C}}$ have the same simple objects, namely simple cuspidal 
modules, which we are going to describe in the next subsection.

\subsection{Classification of simple cuspidal (dense) $\mathfrak{sl}_n$-modules}\label{s1.2}

Let $U=U(\mathfrak{g})$ denote the universal enveloping algebra 
of $\mathfrak{g}$ with center $Z$. For $\lambda\in\mathfrak{h}^*$ 
let $L(\lambda)$ be the simple highest weight module with highest 
weight $\lambda$, and $\chi_{\lambda}:Z\to\mathbb{C}$ the algebra
homomorphism (i.e. the {\em central character}), which defines the
(scalar) action of $Z$ on $L(\lambda)$ (see e.g. \cite[Chapter~7]{Di}). 
Denote by $C_{\operatorname{dom}}$ the closure of the dominant Weyl
chamber with respect to the dot-action. Then there is a natural
bijection between elements in $C_{\operatorname{dom}}$ and algebra 
homomorphisms $\chi:Z\to\mathbb{C}$, given by 
$C_{\operatorname{dom}}\ni\lambda\mapsto \chi_{\lambda}$. 
The action of $Z$ preserves all (generalized) weight spaces and hence 
is locally finite on all modules $M\in\hat{\mathcal{C}}$. This gives a 
natural decomposition
\begin{displaymath}
\hat{\mathcal{C}}=\displaystyle{\bigoplus_{\lambda\in C_{\operatorname{dom}}}\hat{\mathcal{C}}_{\lambda}}, 
\end{displaymath}
where $\hat{\mathcal{C}}_{\lambda}$ denotes the full subcategory of
$\hat{\mathcal{C}}$, consisting of all modules $M$ such that
$(z-\chi_{\lambda}(z))^km=0$ for all $m\in M$, $z\in Z$ and all $k\gg 0$. 
This decomposition induces the decomposition
$\mathcal{C}=\oplus_{\lambda\in C_{\operatorname{dom}}}\mathcal{C}_{\lambda}$, 
where
$\mathcal{C}_{\lambda}=\mathcal{C}\cap \hat{\mathcal{C}}_{\lambda}$.
By \cite[Section~8]{Mat}, the category $\hat{\mathcal{C}}_{\lambda}$ 
is nonzero in precisely the following cases:

\begin{enumerate}[(I)]
\item\label{case1} $\lambda$ is regular, non-integral, and the set
$\{j\mid \lambda_j\not\in\mathbb{Z}_+\}$ coincides either with $\{1\}$
or $\{n-1\}$ or with $\{i,i+1\}$ for some $i\in\{1,2,\dots,n-2\}$, moreover
$\la_i+\la_{i+1}\in\{-1,0,1,2,\dots\}$ in the latter case;
\item\label{case2} $\lambda$ is integral and singular and its stabilizer
with respect to the dot action is $\langle s_i\rangle$ for some
$i\in\{1,2,\dots,n-1\}$;
\item\label{case3} $\lambda$ is regular and integral.
\end{enumerate}

To describe simple cuspidal modules in each case we need an additional 
tool from \cite{Mat}. Let $\mathttD\subset\Delta$ be a nonempty set 
of roots such that
\begin{displaymath}
\alpha,\beta\in \mathttD\Rightarrow\alpha+\beta\not\in \Delta\cup \{0\}.
\end{displaymath}
For every $\alpha\in \mathttD$ we fix a non-zero root
vector $X_{\alpha}\in\mathfrak{g}_{\alpha}$.
Note that $X_{\alpha}X_{\beta}=X_{\beta}X_{\alpha}$ for all
$\alpha,\beta\in \mathttD$. Consider the multiplicative
subset $S(\mathttD)$ of $U$, consisting of all
elements of the form $\prod_{\alpha\in \mathttD}X_{\alpha}^{m_{\alpha}}$,
where all $m_{\alpha}\in \mathbb{Z}_+$. The set $S(\mathttD)$ is an Ore
subset of $U$ and hence we have the corresponding Ore localization
$U_{S(\mathttD)}$ (note that the localization is independent, up to 
isomorphism, of the
choice of the root vectors). By \cite[Lemma 4.3]{Mat} there is a unique
polynomial map on $\mC^{|\mathttD|}$ with values in algebra automorphisms of
$U_{S(\mathttD)}$, $\mathbf{x}=(x_{\alpha})_{\alpha\in\mathttD} \mapsto \Phi^{\mathttD}_{\mathbf{x}}$,
such that in case $\mathbf{x}$ is a tuple of integers we have
\begin{displaymath}
\Phi^{\mathttD}_{\mathbf{x}}(u)=
\prod_{\alpha\in \mathttD}X_{\alpha}^{-x_{\alpha}}
\cdot u \cdot \prod_{\alpha\in \mathttD}X_{\alpha}^{x_{\alpha}}\quad
\text{ for all }\quad u\in U_{S(\mathttD)}.
\end{displaymath}
If $\varphi$ is an automorphism of some Ring $\mathcal{R}$ and $M$
an $\mathcal{R}$-module we denote by $M^\varphi$ the module $M$ with
the $\mathcal{R}$-module action twisted by $\varphi$. Note that every
cuspidal module is automatically a $U_{S(\mathttD)}$-module. Now the
main  classification result of \cite{Mat} is the following:

\begin{theorem}[Classification theorem]  \label{thm1}
In all cases \eqref{case1}-\eqref{case3}, any simple module in
$\hat{\mathcal{C}}_{\lambda}$ is isomorphic to a module of the form
$\big( U_{S(\mathttD)}\otimes_U L(\mu)\big)^{\Phi^{\mathttD}_{\mathbf{x}}}$ 
for appropriate $\mu$, $\mathbf{x}$, $\mathttD$.
In cases \eqref{case1} and \eqref{case2} it is $\mu=\la$, whereas 
in case \eqref{case3} we have $\mu=w\cdot\la$ with  $w\in\{s_1,s_1s_2,\dots,s_{1}s_{2}\cdots s_{n-2}s_{n-1}\}=:W^{\operatorname{short}}$, the set of shortest coset representatives in $S_{n-1}\backslash S_{n}$ different from the identity.
\end{theorem}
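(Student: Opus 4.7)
The plan is to follow Mathieu's twisted localization strategy from \cite{Mat}, handling the theorem in two directions: first, that every module of the prescribed form $(U_{S(\mathttD)}\otimes_U L(\mu))^{\Phi^{\mathttD}_{\mathbf{x}}}$ with $\mu,\mathttD,\mathbf{x}$ admissible lies in $\hat{\mathcal{C}}_\la$ and is simple; and second, that every simple object of $\hat{\mathcal{C}}_\la$ is isomorphic to such a module.

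For the forward direction, I would fix $\mathttD$ as a maximal set of mutually non-summing positive roots so that $S(\mathttD)$ is a commutative Ore set. Localizing $L(\mu)$ at $S(\mathttD)$ renders every $X_\alpha$ ($\alpha\in\mathttD$) invertible, and twisting by $\Phi^{\mathttD}_{\mathbf{x}}$ shifts the support by $\sum_\alpha x_\alpha\alpha$. For a Zariski open subset of $\mathbf{x}\in\mC^{|\mathttD|}$ the support becomes dense and the polynomial family lemma \cite[Lem.~4.3]{Mat} guarantees that the remaining root vectors also act bijectively, yielding a cuspidal module. Simplicity is inherited from $L(\mu)$ because both localization at an Ore set and twisting by an algebra automorphism are exact functors preserving irreducibility, and the central character is preserved by twisting, so it agrees with $\chi_\la$ precisely when $\mu\in W\cdot\la$.

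For the converse, let $M\in\hat{\mathcal{C}}_\la$ be simple. Since $M$ is cuspidal, every nonzero $X_\alpha$ acts bijectively, so $M$ extends canonically to a $U_{S(\mathttD)}$-module. I would then apply the polynomial family $\Phi^{\mathttD}_{-t\mathbf{x}}$ with $t\in\mC$ to construct a one-parameter deformation $M^{\Phi^{\mathttD}_{-t\mathbf{x}}}$; this preserves the central character while continuously shifting the support. The crucial step is to choose $\mathbf{x}$ so that at a suitable specialization of $t$ the shifted support acquires a maximal weight $\mu$ with $\mathfrak{n}_+\cdot v=0$ for some $0\neq v\in M^{\Phi^{\mathttD}_{-t\mathbf{x}}}_\mu$. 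The submodule generated by $v$ is a simple quotient of a Verma module, hence some $L(\mu)$, and its localization at $S(\mathttD)$ coincides with the specialized twist of $M$. Untwisting recovers $M\cong (U_{S(\mathttD)}\otimes_U L(\mu))^{\Phi^{\mathttD}_{\mathbf{x}}}$.

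It remains to identify the admissible $\mu$. Equality of central characters forces $\mu\in W\cdot\la$, and the additional requirement that $U_{S(\mathttD)}\otimes_U L(\mu)$ be nonzero and cuspidal selects specific orbit representatives. In case \eqref{case1} the non-integrality and the specified support of non-integral coordinates rigidify the choice to $\mu=\la$; in case \eqref{case2} the singular stabilizer collapses the orbit so that only $\mu=\la$ yields a nonzero cuspidal localization; in case \eqref{case3} regularity allows multiple representatives, and a direct verification shows exactly the elements of $W^{\operatorname{short}}$ produce a cuspidal localization---these correspond to simple highest weight modules whose single parabolic ``direction of infinite dimension'' matches the localization datum $\mathttD$. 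The hard part of the argument is the construction and analysis of the polynomial deformation $M^{\Phi^{\mathttD}_{-t\mathbf{x}}}$ and the proof that a suitable specialization produces a highest weight vector; this is the technical heart of Mathieu's paper, requiring control of composition factors and the rigidity of coherent families under the one-parameter deformation.
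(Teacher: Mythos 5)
The paper itself offers no proof of Theorem~\ref{thm1}: it is imported wholesale from Mathieu's classification \cite{Mat} (see the references to \cite[Section~8]{Mat} around the statement), so the only meaningful comparison is with Mathieu's original argument, of which your proposal is a rough outline rather than a proof. Within that outline there is one outright error and two places where the real content is asserted rather than established. The error is the claim that localization at $S(\mathttD)$ preserves irreducibility: it does not. For an infinite-dimensional simple highest weight module $L(\mu)$, the localized module $U_{S(\mathttD)}\otimes_U L(\mu)$ contains $L(\mu)$ as a proper submodule and is essentially never simple (already for $\mathfrak{sl}_2$, localizing at the lowering operator strictly enlarges the module); only the twists $\big(U_{S(\mathttD)}\otimes_U L(\mu)\big)^{\Phi^{\mathttD}_{\mathbf{x}}}$ for suitable $\mathbf{x}$ are simple, which is exactly what the paper records right after the theorem (the module is only ``generically simple''). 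So simplicity cannot be inherited formally from exactness of localization and twisting; it is part of what has to be proved, and in Mathieu's treatment it comes out of the analysis of coherent families, not out of general nonsense about Ore localization.

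The two asserted steps are the technical heart and remain unproved in your text. In the converse direction, the existence of a specialization of the twist at which some nonzero vector is annihilated by $\mathfrak{n}_+$ is precisely what Mathieu's machinery of semisimple irreducible coherent families delivers: one attaches to the simple cuspidal module $M$ a coherent family with full support, proves its uniqueness and rigidity, and shows that it must contain an infinite-dimensional simple highest weight module with bounded weight multiplicities; none of this follows from the polynomiality of $\Phi^{\mathttD}_{\mathbf{x}}$ alone, and your sketch acknowledges but does not bridge this gap. Likewise the determination of the admissible $\mu$ --- $\mu=\la$ in cases \eqref{case1} and \eqref{case2}, and $\mu=w\cdot\la$ with $w\in W^{\operatorname{short}}$ in case \eqref{case3} --- is the combinatorial classification of bounded simple highest weight modules for $\mathfrak{sl}_n$, a substantial chapter of \cite{Mat} which your phrase ``a direct verification shows'' does not supply. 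As a map of where Mathieu's proof lives, your proposal is serviceable; as a proof it is not, and the simplicity claim in the forward direction would have to be deleted or corrected in any repaired version.
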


In each case the category $\hat{\mathcal{C}}_{\lambda}$
has infinitely many isomorphism classes of simple objects
(as the module $L(\mu)^{\Phi^{\mathttD}_{\mathbf{x}}}$ is generically
simple and there are infinitely many $\mathbf{x}$). To get rid of this 
problem one can decompose
further: for $\xi\in \mathfrak{h}^*/\mathbb{Z}\Delta$ let
$\hat{\mathcal{C}}_{\lambda,\xi}$ denote the full
subcategory of $\hat{\mathcal{C}}_{\lambda}$, which consists of all
$M$ such that $\mathrm{supp}(M)=\xi$. Then we have the
decomposition
$\hat{\mathcal{C}}_{\lambda}=\oplus_{\xi}\hat{\mathcal{C}}_{\lambda,\xi}$
and the induced decomposition
$\mathcal{C}_{\lambda}=\oplus_{\xi}\mathcal{C}_{\lambda,\xi}$.
From \cite[Section 11]{Mat} and Theorem~\ref{thm1}
one obtains the following:

\begin{corollary}\label{cor2}
\begin{enumerate}[(i)]
\item In cases \eqref{case1} and \eqref{case2}, the category $\hat{\mathcal{C}}_{\lambda,\xi}$ has at most one isomorphism class
of simple modules.
\item\label{cor2.3} In case \eqref{case3} every nonzero
$\hat{\mathcal{C}}_{\lambda,\xi}$ (and $\mathcal{C}_{\lambda,\xi}$)
is indecomposable and has exactly $n-1$ isomorphism classes of
simple modules, naturally in bijection with $W^{\operatorname{short}}$.
\end{enumerate}
\end{corollary}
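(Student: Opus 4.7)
The plan is to read off both statements from the Classification Theorem (Theorem~\ref{thm1}), combined with a careful analysis of how the support $\xi\in\mathfrak{h}^*/\mathbb{Z}\Delta$ constrains the parameters, plus an indecomposability argument built on Mathieu's coherent family construction.

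First I would compute, for fixed choices of $\mathtt{R}$ and $\mu$, the support of $M(\mathbf{x}):=\bigl(U_{S(\mathtt{R})}\otimes_U L(\mu)\bigr)^{\Phi^{\mathtt{R}}_{\mathbf{x}}}$. The untwisted module $U_{S(\mathtt{R})}\otimes_U L(\mu)$ has support contained in $\mu-\mathbb{Z}_+\Pi+\mathbb{Z}\mathtt{R}\subset\mu+\mathbb{Z}\Delta$, and the twist by $\Phi^{\mathtt{R}}_{\mathbf{x}}$ shifts every weight by $-\sum_{\alpha\in\mathtt{R}}x_\alpha\alpha$; this is the defining property of $\Phi^{\mathtt{R}}_{\mathbf{x}}$ for integral $\mathbf{x}$ and extends polynomially. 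Consequently the support-coset of $M(\mathbf{x})$ in $\mathfrak{h}^*/\mathbb{Z}\Delta$ equals $\mu-\sum_\alpha x_\alpha\alpha+\mathbb{Z}\Delta$, which determines $\mathbf{x}$ modulo the lattice $\mathbb{Z}^{\mathtt{R}}+\{\mathbf{x}:\sum x_\alpha\alpha\in\mathbb{Z}\Delta\}$. Finally I would check that shifting $\mathbf{x}$ by an element of $\mathbb{Z}^{\mathtt{R}}$ changes $\Phi^{\mathtt{R}}_{\mathbf{x}}$ by an inner automorphism of $U_{S(\mathtt{R})}$ (conjugation by a product of the $X_\alpha$), so $M(\mathbf{x})\cong M(\mathbf{x}')$ as $U$-modules whenever $\mathbf{x}-\mathbf{x}'\in\mathbb{Z}^{\mathtt{R}}$. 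Combined with \cite[\S11]{Mat}, which identifies when two such twisted modules with the same support coincide, this yields a bijection between simple classes in $\hat{\mathcal{C}}_{\lambda,\xi}$ and the data $\{\mu$ appearing in Theorem~\ref{thm1} for the given $\la\}$ (with $\xi$ admissible for $\mu$).

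For part (i) the set of admissible $\mu$ has exactly one element, namely $\mu=\lambda$, so $\hat{\mathcal{C}}_{\lambda,\xi}$ has at most one isomorphism class of simples (zero if $\xi$ is not in the relevant coset). For part (ii) this set is exactly $W^{\operatorname{short}}\cdot\lambda$, which has cardinality $n-1$, and the shifts by $\mathbb{Z}\Delta$ collapse all these into a common support coset; hence for each $w\in W^{\operatorname{short}}$ there is exactly one simple in $\hat{\mathcal{C}}_{\lambda,\xi}$ coming from $\mu=w\cdot\lambda$, giving $n-1$ classes in bijection with $W^{\operatorname{short}}$.

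The main obstacle is then indecomposability of $\hat{\mathcal{C}}_{\lambda,\xi}$ (and of $\mathcal{C}_{\lambda,\xi}$) in case (\ref{case3}), since mere counting of simples does not force the block to be connected. Here I would invoke Mathieu's coherent families from \cite[\S8--11]{Mat}: the semisimple part of the coherent family attached to a cuspidal $L(\mu)$ realises all $n-1$ simple cuspidals (one per $w\cdot\lambda$) as essentially images under twists of a single irreducible generic module, and there exist specializations producing non-semisimple members whose Jordan--H\"older series connect neighbouring simples. Equivalently, one exhibits non-split extensions by taking submodules/quotients of $U_{S(\mathtt{R})}\otimes_U L(\mu)$ at non-generic $\mathbf{x}$ where $L(\mu)$ acquires a submodule structure; this is again part of Mathieu's analysis. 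Thus the Ext-quiver of $\hat{\mathcal{C}}_{\lambda,\xi}$ is connected and the block is indecomposable, which completes the proof. The same argument applies to $\mathcal{C}_{\lambda,\xi}$, since it shares the simples with $\hat{\mathcal{C}}_{\lambda,\xi}$ and the constructed extensions lie in $\mathcal{C}_{\lambda,\xi}$.
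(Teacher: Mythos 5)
Your proposal is correct and follows essentially the same route as the paper: the paper derives Corollary~\ref{cor2} directly from Theorem~\ref{thm1} together with \cite[Section~11]{Mat} (and attributes the block/indecomposability statement to the same source), and your argument is just this reduction with the support/twist bookkeeping spelled out (the inner-automorphism argument for integral shifts of $\mathbf{x}$, the support coset determining the twist). The points you delegate to Mathieu -- when two twisted localizations with equal support coincide, the pairwise non-isomorphy of the $n-1$ simples attached to $W^{\operatorname{short}}$, and the existence of non-split extensions linking neighbouring simples -- are exactly the points the paper also takes from \cite[Section~11]{Mat} without further argument.
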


\subsection{Reduction to special blocks}\label{s1.3}

As shown in \cite[Section11]{Mat}, Corollary~\ref{cor2} describes 
and indexes {\em blocks} (i.e. indecomposable
direct summands) of the categories $\hat{\mathcal{C}}$ and $\mathcal{C}$.
The goal of this paper is to describe associative algebras, whose
categories of finite dimensional modules are equivalent to
$\hat{\mathcal{C}}_{\lambda,\xi}$ or $\mathcal{C}_{\lambda,\xi}$.
In this subsection we reduce this problem to some very special blocks.
We would like to start with the following easy but very important
observation, which gets rid of the parameter $\xi$:

\begin{proposition}\label{prop3}
Assume we are in one of the cases \eqref{case1}--\eqref{case3} and $\xi_1,\xi_2\in \mathfrak{h}^*/\mathbb{Z}\Delta$.
If $\hat{\mathcal{C}}_{\lambda,\xi_k}$ is nonzero for $k=1,2$ then we have equivalences of categories
\begin{eqnarray*}
\hat{\mathcal{C}}_{\lambda,\xi_1}\cong
\hat{\mathcal{C}}_{\lambda,\xi_2},&&\mathcal{C}_{\lambda,\xi_1}\cong
\mathcal{C}_{\lambda,\xi_2}.
\end{eqnarray*}
\end{proposition}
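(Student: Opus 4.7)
The plan is to construct an explicit equivalence $\hat{\mathcal{C}}_{\lambda,\xi_1}\simeq \hat{\mathcal{C}}_{\lambda,\xi_2}$ via a twist functor determined by the polynomial family of automorphisms $\Phi^{\mathttD}_{\mathbf{x}}$ introduced before Theorem~\ref{thm1}. First I would fix a subset $\mathttD\subset\Delta$ realizing simples in both blocks via Theorem~\ref{thm1}, and for each $\mathbf{x}\in \mC^{|\mathttD|}$ define
\begin{displaymath}
F_{\mathbf{x}}:\hat{\mathcal{C}}_\lambda\to \hat{\mathcal{C}}_\lambda,\qquad M\longmapsto M^{\Phi^{\mathttD}_{\mathbf{x}}}.
\end{displaymath}
Every module in $\hat{\mathcal{C}}$ is automatically a $U_{S(\mathttD)}$-module, so the twist is well-defined, and the identity $\Phi^{\mathttD}_{\mathbf{x}}\circ\Phi^{\mathttD}_{-\mathbf{x}}=\mathrm{id}$ (a polynomial identity in $\mathbf{x}$ which manifestly holds on the integer lattice, hence everywhere) shows that $F_{\mathbf{x}}$ is an autoequivalence, with obvious restriction to an autoequivalence of $\mathcal{C}_\lambda$.

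Next I would establish two compatibilities by the same polynomial-extension principle: a polynomial identity in $\mathbf{x}$ holding on $\mZ^{|\mathttD|}$ holds on all of $\mC^{|\mathttD|}$. For integer $\mathbf{x}$ the map $\Phi^{\mathttD}_{\mathbf{x}}$ is inner conjugation inside $U_{S(\mathttD)}$ and so fixes the center $Z$ pointwise; hence $\Phi^{\mathttD}_{\mathbf{x}}(z)=z$ for every $z\in Z$ and every $\mathbf{x}$, and $F_{\mathbf{x}}$ preserves the central character block $\hat{\mathcal{C}}_\lambda$. For $h\in\mathfrak{h}$ the relation $[h,X_\alpha]=\alpha(h)X_\alpha$ gives $\Phi^{\mathttD}_{\mathbf{x}}(h)=h+\sum_{\alpha\in\mathttD} x_\alpha\alpha(h)$ for integer $\mathbf{x}$, and hence for all $\mathbf{x}$; the analogous identity for powers of $h-\lambda(h)$ then shows that the generalized weight decomposition of $F_{\mathbf{x}}(M)$ is that of $M$ with every (generalized) weight shifted by $\sum_\alpha x_\alpha\alpha$. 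In particular $F_{\mathbf{x}}$ restricts to an equivalence
\begin{displaymath}
\hat{\mathcal{C}}_{\lambda,\xi}\xrightarrow{\ \sim\ }\hat{\mathcal{C}}_{\lambda,\,\xi+\sum_\alpha x_\alpha\alpha},
\end{displaymath}
and the twisted module is still cuspidal since its support is again a full $\mZ\Delta$-coset.

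It remains to choose $\mathbf{x}$ realizing the shift $\xi_1\mapsto\xi_2$. Since $\hat{\mathcal{C}}_{\lambda,\xi_k}$ is nonzero, Theorem~\ref{thm1} supplies a simple module $(U_{S(\mathttD)}\otimes_U L(\mu_k))^{\Phi^{\mathttD}_{\mathbf{x}_k}}$ in it, whose support equals $\mu_k+\sum_\alpha x_{k,\alpha}\alpha$ modulo $\mZ\Delta$, i.e.\ equals $\xi_k$. In cases~\eqref{case1} and \eqref{case2} we have $\mu_1=\mu_2=\lambda$, and in case~\eqref{case3} the $\mu_k$'s all lie in the single coset $\lambda+\mZ\Delta$ because $\lambda$ is integral, so that $w\cdot\lambda-\lambda\in\mZ\Delta$ for every $w\in W^{\operatorname{short}}$. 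Therefore $\xi_2-\xi_1\equiv \sum_\alpha(x_{2,\alpha}-x_{1,\alpha})\alpha\pmod{\mZ\Delta}$, and $\mathbf{x}:=\mathbf{x}_2-\mathbf{x}_1$ yields the desired equivalence; the same functor restricts to the equivalence of $\mathcal{C}_{\lambda,\xi_k}$.

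The delicate step underlying everything is the polynomial-extension argument: one must genuinely invoke Mathieu's \cite[Lemma~4.3]{Mat} that $\Phi^{\mathttD}_{\mathbf{x}}$ really is a polynomial family of automorphisms of $U_{S(\mathttD)}$, and then verify that the closed integer-case formulas for its action on $Z$ and on $\mathfrak{h}$ are genuine polynomial identities in the ambient algebra; granting this, the argument is essentially formal.
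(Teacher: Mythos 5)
Your proposal is correct and follows essentially the same route as the paper: both prove the equivalence by twisting with Mathieu's polynomial family of automorphisms $\Phi^{\mathttD}_{\mathbf{x}}$, using the identity $\Phi^{\mathttD}_{\mathbf{x}}(h)=h+\sum_{\alpha\in\mathttD}x_\alpha\alpha(h)$ (extended from integer $\mathbf{x}$ by polynomiality) to shift supports, with $\Phi^{\mathttD}_{-\mathbf{x}}$ giving the inverse and the linearity on $\mathfrak{h}$ giving the restriction to $\mathcal{C}_{\lambda,\xi_k}$. The only minor difference is how the parameter is chosen: the paper picks representatives $\nu_i\in\xi_i$ and solves $\sum_\alpha x_\alpha\alpha=\nu_2-\nu_1$ using that $\mathttD$ spans $\mathfrak{h}^*$, whereas you take $\mathbf{x}=\mathbf{x}_2-\mathbf{x}_1$ from the classification in Theorem~\ref{thm1} together with the observation that $\mu_2-\mu_1\in\mathbb{Z}\Delta$; both choices work.
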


\begin{proof}
Let $\mathttD$ be as given by Theorem~\ref{thm1}, then $R$ 
generates $\mathfrak{h}^*$. Choose arbitrary $\nu_i\in\xi_i$, $i=1,2$.
From the polynomiality
of $\Phi_{\mathbf{x}}^{\mathttD}$ it follows that there exists
$\mathbf{x}$ such that every $h\in\mathfrak{h}$ is mapped by
$\Phi_{\mathbf{x}}^{\mathttD}$ to $h+(\nu_2-\nu_1)(h)$. Indeed, one 
directly verifies the formula 
\begin{displaymath}
\Phi_{\mathbf{x}}^{\mathttD}(h)=
\prod_{\alpha\in R}X_\alpha^{-x_{\alpha}}\, h\,
\prod_{\alpha\in R}X_{\alpha}^{x_{\alpha}}=
h+\sum_{\alpha\in R}x_{\alpha}\alpha(h) 
\end{displaymath}
and then chooses $\mathbf{x}$ accordingly (which is possible as
$R$ generates $\mathfrak{h}^*$). 
Such $\Phi_{\mathbf{x}}^{\mathttD}$ thus maps
$\hat{\mathcal{C}}_{\lambda,\xi_1}$ to $\hat{\mathcal{C}}_{\lambda,\xi_2}$ 
and is an equivalence with inverse $\Phi_{-\mathbf{x}}^{\mathttD}$.
As $\Phi_{\mathbf{x}}^{\mathttD}$ maps linear polynomials over
$\mathfrak{h}$ to linear polynomials, it restricts to an
equivalence from $\mathcal{C}_{\lambda,\xi_1}$ to
$\mathcal{C}_{\lambda,\xi_2}$.
\end{proof}

Further reduction is given by the following:

\begin{proposition}\label{prop4}
Suppose we are given $\lambda_k$, $k=1,2$, in the situation of either 
of the cases \eqref{case1}-\eqref{case3}. If $\lambda_1-\lambda_2$ 
is integral,  then there are equivalences of  categories
\begin{eqnarray*}
\hat{\mathcal{C}}_{\lambda_1}\cong\hat{\mathcal{C}}_{\la_2},
&&\mathcal{C}_{\lambda_1}\cong\mathcal{C}_{\la_2}.
\end{eqnarray*}
\end{proposition}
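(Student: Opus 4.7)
The plan is to use translation (projective) functors. Set $\nu:=\lambda_2-\lambda_1$, which is integral by hypothesis, and let $V$ be the finite-dimensional simple $\mathfrak{g}$-module whose highest weight is the unique dominant integral element in the $W$-orbit of $\nu$; then $\nu$ is an extremal weight of $V$ of multiplicity one. Define
\[
\theta\colon\hat{\mathcal{C}}_{\lambda_1}\to\hat{\mathcal{C}}_{\lambda_2},\qquad M\longmapsto \operatorname{pr}_{\chi_{\lambda_2}}(V\otimes M),
\]
where $\operatorname{pr}_{\chi_{\lambda_2}}$ denotes projection onto the generalized $\chi_{\lambda_2}$-eigenspace for the $Z$-action, and define the candidate quasi-inverse $\theta'$ analogously, with $V$ replaced by its dual $V^*$.

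First I would verify that $\theta$ is well defined, i.e.\ that tensoring with a finite-dimensional module preserves $\hat{\mathcal{C}}$ (and $\mathcal{C}$). Finite-dimensionality of (generalized) weight spaces is immediate from $(V\otimes M)^{\kappa}=\bigoplus_{\eta}V_{\eta}\otimes M^{\kappa-\eta}$. For cuspidality of $V\otimes M$, for every root $\alpha$ the operator $X_\alpha$ acts on $V\otimes M$ as $X_\alpha\otimes 1+1\otimes X_\alpha$: the two summands commute, the first is nilpotent (since $V$ is finite-dimensional) and the second is bijective (since $M$ is cuspidal), and their sum factors as $(1\otimes X_\alpha)\bigl(\mathrm{id}+(1\otimes X_\alpha)^{-1}(X_\alpha\otimes 1)\bigr)$, hence is bijective. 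Since $Z$ acts locally finitely on every object of $\hat{\mathcal{C}}$, the projection $\operatorname{pr}_{\chi_{\lambda_2}}$ is an exact additive functor, so $\theta$ is a well-defined exact functor landing in $\hat{\mathcal{C}}_{\lambda_2}$ and restricting to $\mathcal{C}_{\lambda_1}\to\mathcal{C}_{\lambda_2}$.

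The main step is to show that $\theta$ and $\theta'$ are mutually quasi-inverse. They form a biadjoint pair via the canonical isomorphism $V\otimes V^*\cong\operatorname{End}(V)$, which contains the trivial $\mathfrak{g}$-module with multiplicity one. By exactness it suffices to check the adjunction unit and counit on simple objects. Using Theorem~\ref{thm1}, any simple $M\in\hat{\mathcal{C}}_{\lambda_1}$ has the form $(U_{S(R)}\otimes_U L(\mu))^{\Phi^R_{\mathbf{x}}}$, and since $V$ is finite-dimensional the twist commutes with $V\otimes(-)$, giving
\[
V\otimes M\;\cong\;\bigl(U_{S(R)}\otimes_U(V\otimes L(\mu))\bigr)^{\Phi^R_{\mathbf{x}}}.
\]
This reduces the central-character decomposition of $V\otimes M$ to the classical translation-functor picture in category $\mathcal{O}$ applied to $L(\mu)$.

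The main obstacle is the final combinatorial step: one must show that exactly one weight $\eta$ of $V$, namely $\eta=\nu$, satisfies $\chi_{\lambda_1+\eta}=\chi_{\lambda_2}$, and does so with multiplicity one. This forces $\theta$ to restrict to a bijection on isoclasses of simples, and combined with biadjointness this yields the equivalence. The hypothesis that both $\lambda_1$ and $\lambda_2$ lie in one of the cases \eqref{case1}--\eqref{case3}, together with the integrality of $\nu$, ensures that the singularity type is preserved under the shift and rules out the spurious $\eta$'s; the resulting count of simples is consistent with Corollary~\ref{cor2}.
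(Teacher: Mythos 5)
Your overall strategy (translation/projective functors) is the same as the paper's, and for case \eqref{case1}, case \eqref{case3}, and case \eqref{case2} with equal stabilizers it amounts to the classical equivalence the paper simply quotes. The genuine gap is the remaining part of case \eqref{case2}: since any two integral weights differ by an integral weight, the proposition must also cover $\lambda_1,\lambda_2$ both integral singular but with \emph{different} stabilizers $\langle s_{j_1}\rangle\neq\langle s_{j_2}\rangle$, and there your decisive combinatorial claim is false. It is not true that $\nu=\lambda_2-\lambda_1$ is the only weight $\eta$ of $V$ with $\chi_{\lambda_1+\eta}=\chi_{\lambda_2}$: already for $\mathfrak{sl}_3$, take $\lambda_1+\rho=2\omega_1$ (stabilizer $\langle s_2\rangle$) and $\lambda_2+\rho=2\omega_2$ (stabilizer $\langle s_1\rangle$); then $V\cong\mathrm{Sym}^2(\mathbb{C}^3)$ with highest weight $2\varepsilon_1$, and writing $\lambda_1+\rho=(2,0,0)$, $\lambda_2+\rho=(2,2,0)$ in $\varepsilon$-coordinates, both $\eta=2\varepsilon_2=\nu$ and $\eta=2\varepsilon_3$ satisfy $\lambda_1+\rho+\eta\in W(\lambda_2+\rho)$, each with multiplicity one. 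The phrase ``the singularity type is preserved under the shift'' is therefore not available: the two stabilizers are only conjugate, not equal, so the same-facet translation lemma you are implicitly invoking does not apply, and your argument does not show that $\mathrm{pr}_{\chi_{\lambda_2}}(V\otimes-)$ induces a bijection on simples, let alone an equivalence. This is exactly the point where the paper departs from the naive translation functor: for $j_2=j_1\pm1$ it uses the composite of translation out of the $s_{j_1}$-wall followed by translation onto the $s_{j_2}$-wall, which is an equivalence by \cite[5.9]{Ja} (see Subsection~\ref{s3.3}), and then composes $|j_2-j_1|$ such wall-to-adjacent-wall equivalences.

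A secondary, fixable issue: the assertion that ``the twist commutes with $V\otimes(-)$'' is not automatic. For non-integral $\mathbf{x}$ the automorphism $\Phi^{\mathttD}_{\mathbf{x}}$ is not inner, and the isomorphism $V\otimes\bigl(U_{S(\mathttD)}\otimes_U L(\mu)\bigr)^{\Phi^{\mathttD}_{\mathbf{x}}}\cong\bigl(U_{S(\mathttD)}\otimes_U(V\otimes L(\mu))\bigr)^{\Phi^{\mathttD}_{\mathbf{x}}}$ is a nontrivial property of twisted localization (part of Mathieu's coherent family machinery) that should be cited or proved rather than asserted. With that reference added, the same-stabilizer part of your argument is essentially the standard proof the paper summarizes by invoking \cite{BG} and \cite{Ja}; the cross-wall part of case \eqref{case2} needs the paper's composite-functor argument or some genuinely new input.
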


\begin{proof}
Assume that we are in case \eqref{case3} or in case \eqref{case1}. 
Then the desired equivalence is well-known: it is given by the 
projective functor (see \cite{BG}) (=translation functor in the sense 
of \cite{Ja}) $\mathrm{T}_{\lambda_1}^{\la_2}$, viewed as a functor 
from $\mathcal{C}_{\la_1}$ to $\mathcal{C}_{\la_2}$ or as a functor 
from $\hat{\mathcal{C}}_{\la_1}$ to $\hat{\mathcal{C}}_{\la_2}$,
respectively. The same argument applies in case \eqref{case2} 
provided that $\lambda_1$ has the same stabilizer as $\la_2$.
In case the stabilizers are different, we may assume that they are 
generated by $s_{j_1}$ and $s_{j_2}$ for some $j_1$ and $j_2$ from
$\{1,2,\dots,n-1\}$. In case $j_2=j_1\pm 1$, the equivalence is given 
by projective functors as in \cite[5.9]{Ja} (see Subsection~\ref{s3.3} 
for more details). The general case follows by composing $|j_2-j_1|$ 
such equivalences.
\end{proof}

\begin{corollary}\label{cor5}
Assume we are given $\mathcal{C}_{\lambda_1}$ of type as in case 
\eqref{case1} or \eqref{case2}. Then one can find $\mathcal{C}_{\lambda_2}$ 
of the same type such that
\begin{enumerate}[(a)]
\item $\hat{\mathcal{C}}_{\lambda_1}\cong\hat{\mathcal{C}}_{\la_2}$ and $\mathcal{C}_{\lambda_1}\cong\mathcal{C}_{\la_2}$, and
\item all simple modules in $\hat{\mathcal{C}}_{\la_2}$ are pointed.
\end{enumerate}
\end{corollary}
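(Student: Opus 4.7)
The plan is to apply Proposition~\ref{prop4} and pass from $\lambda_1$ to an integral shift $\lambda_2$, chosen so that the (unique, by Corollary~\ref{cor2}(i)) simple module in each non-zero block $\hat{\mathcal{C}}_{\lambda_2,\xi}$ is pointed. Since in cases \eqref{case1} and \eqref{case2} the category $\hat{\mathcal{C}}_{\lambda_2,\xi}$ contains at most one isomorphism class of simple modules, only a single pointedness condition per block must be arranged, so it is enough to realise $\lambda_2$ as a single element in the integral $\mZ\Delta$-shift class of $\lambda_1$.

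By Theorem~\ref{thm1} the simple module in $\hat{\mathcal{C}}_{\lambda_2,\xi}$ has the shape $N=\big(U_{S(\mathttD)}\otimes_U L(\lambda_2)\big)^{\Phi^{\mathttD}_{\mathbf{x}}}$ for suitable $\mathttD$ and $\mathbf{x}$. The key observation is that the explicit formula for $\Phi^{\mathttD}_{\mathbf{x}}$ on $\mathfrak{h}$ derived in the proof of Proposition~\ref{prop3} shows that $\Phi^{\mathttD}_{\mathbf{x}}$ acts on every $h\in\mathfrak{h}$ by adding a scalar. Hence twisting by $\Phi^{\mathttD}_{\mathbf{x}}$ merely translates supports while preserving the dimension of every (generalized) weight space. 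Consequently $N$ is pointed if and only if the localisation $M:=U_{S(\mathttD)}\otimes_U L(\lambda_2)$ is pointed, and the problem reduces to a judicious choice of $\lambda_2$.

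To exhibit such $\lambda_2$, I would shift the integer coordinates of $\lambda_1$ so that $L(\lambda_2)$ becomes a highest weight module whose $S(\mathttD)$-localisation contains a one-dimensional weight space. Concretely: the weight multiplicities of $M$ on its support $\lambda_2+\mZ\mathttD$ are given by polynomial functions in the integer part of $\lambda_2$ (cf.\ \cite[Sects.~4, 8--11]{Mat}); one chooses $\lambda_2$ in the integral shift class where these polynomials take the value $1$ on some weight. Equivalently, one invokes the fact that every Mathieu coherent family contains a pointed simple cuspidal module, and takes $\lambda_2$ to be the central character of this module. In case \eqref{case1} the non-integer coordinates of $\lambda_1$ are fixed under integral shifts, so $\lambda_2$ automatically remains of case \eqref{case1} type; in case \eqref{case2} the shifted $\lambda_2$ is still integral and singular, possibly with a different stabiliser $\langle s_{j}\rangle$, but Proposition~\ref{prop4} still provides the desired equivalence by composing equivalences between neighbouring singular stabilisers.

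The main obstacle is the existence step of the previous paragraph: producing an integral shift that simultaneously preserves the case type and achieves pointedness of the localised module. This is essentially a weight-multiplicity calculation embedded in Mathieu's classification theorem. Once the right $\lambda_2$ has been exhibited, assertion (a) of the corollary is immediate from Proposition~\ref{prop4}, and assertion (b) follows from the construction together with the uniqueness clause of Corollary~\ref{cor2}(i).
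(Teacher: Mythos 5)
Your skeleton---find an integral shift $\lambda_2$ of $\lambda_1$ whose (by Corollary~\ref{cor2} unique) simple cuspidal module is pointed, then invoke Proposition~\ref{prop4}---is the same as the paper's, and your observation that twisting by $\Phi^{\mathttD}_{\mathbf{x}}$ only translates supports and therefore preserves weight multiplicities is correct but beside the main point. The crux is precisely the step you yourself flag as ``the main obstacle'': producing, inside the integral shift class of $\lambda_1$ and of the same type, a central character whose simple cuspidal module is pointed. That existence statement \emph{is} the content of the corollary, so deferring it means the proposal is not yet a proof.

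Neither of your suggested routes closes this gap. The claim that every Mathieu coherent family contains a pointed simple cuspidal module is false whenever the family has degree $d>1$: by definition all its weight spaces, and hence all weight spaces of its cuspidal members, have dimension $d$. Worse, the coherent family attached to the given simple module consists of its twisted localizations, and since $\Phi^{\mathttD}_{\mathbf{x}}$ fixes the center pointwise (by polynomiality, as it does for integral $\mathbf{x}$), every member has the \emph{same} central character $\chi_{\lambda_1}$; so this route could at best show that the original block already has pointed simples, which is false in general and would make the corollary vacuous---it cannot produce a genuinely shifted $\lambda_2$. The polynomiality-of-multiplicities route likewise begs the question: one must still prove that the value $1$ is attained at some integral shift of $\lambda_1$ of the same type, and no argument is given. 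The paper settles exactly this point by citing the tensor product realization theorem of Britten and Lemire \cite[Theorem~4.2]{BL2}: every simple cuspidal $\mathfrak{g}$-module is a submodule of the tensor product of a pointed cuspidal module with a finite-dimensional module. Since the finite-dimensional factor moves supports and central characters only by integral weights, this immediately yields $\lambda_2$ with $\lambda_1-\lambda_2$ integral such that the simple modules in $\hat{\mathcal{C}}_{\lambda_2}$ are pointed, and Proposition~\ref{prop4} then gives the equivalences. Some input of this kind is what has to replace your existence step.
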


\begin{proof}
By \cite[Theorem~4.2]{BL2}, every simple cuspidal $\mathfrak{g}$-module
is a submodule of a tensor product of a pointed cuspidal module and a
finite dimensional module. This implies the existence of some $\la_2$
such that simple modules in ${\mathcal{C}}_{\lambda_2}$ are pointed and
the difference $\lambda_1-\la_2$ is integral. Then the claim follows from
Proposition~\ref{prop4}.
\end{proof}

\subsection{Realization of pointed cuspidal modules}\label{s1.4}

Here we present a realization of pointed cuspidal modules from \cite{BL1}.
For $\mathbf{a}=(a_1,a_2,\dots,a_n)\in\mathbb{C}^n$ consider the
complex vector space $N(\mathbf{a})$ with the formal basis
\begin{displaymath}
\{\mathbf{v}_{\mathbf{b}}=x_1^{a_1+b_1}x_2^{a_2+b_2}\cdots x_n^{a_n+b_n}\mid
b_i\in\mathbb{Z} \text{ for all }i, \text{ and }
b_1+b_2+\dots+b_n=0\}.
\end{displaymath}
Then $N(\mathbf{a})$ can be turned into a $\mathfrak{g}$-module by 
restricting the natural $\mathfrak{gl}_n$-action in which the matrix 
unit $e_{i,j}$ acts as the differential operator
$x_i\frac{\partial}{\partial x_j}$. More precisely, if $\{\varepsilon_{i}:i=1,2,\dots,n\}$ denotes
the standard basis of $\mathbb{C}^n$, then clearly $\frac{\partial}{\partial x_j}\mathbf{v}_{\mathbf{b}}= (a_j+b_j)
\mathbf{v}_{\mathbf{b}-\varepsilon_{j}}$, and
\begin{eqnarray}\label{eq1}
e_{i,j}\cdot \mathbf{v}_{\mathbf{b}}= (a_j+b_j)
\mathbf{v}_{\mathbf{b}+\varepsilon_{i}-\varepsilon_{j}}.
\end{eqnarray}
These modules exhaust the simple pointed cuspidal ones:

\begin{theorem}[\cite{BL1}] \label{thm5}
Every simple pointed cuspidal $\mathfrak{g}$-module is isomorphic
to $N(\mathbf{a})$ for some $\mathbf{a}$ as above.
\end{theorem}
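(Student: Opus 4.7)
The plan is to construct an explicit isomorphism $M\cong N(\mathbf{a})$ for a suitable $\mathbf{a}\in\mathbb{C}^n$ by matching structure constants in a normalized weight basis of $M$.

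First I would exploit both hypotheses on a simple pointed cuspidal $M$: since $M$ is cuspidal, every nonzero root vector acts bijectively on $M$, so all weight spaces share a common dimension, and the pointed assumption forces this common dimension to be $1$. Consequently $\mathrm{supp}(M)=\mu_0+\mathbb{Z}\Delta$ is a full coset, and each weight space $M_\mu$ admits a unique-up-to-scalar generator $v_\mu$. Writing $e_{i,j}v_\mu=c_{i,j}(\mu)\,v_{\mu+\varepsilon_i-\varepsilon_j}$ with all $c_{i,j}(\mu)\in\mathbb{C}^{\times}$ encodes the entire $\mathfrak{g}$-action on $M$ in terms of the scalar functions $c_{i,j}$.

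Next I would choose $\mathbf{a}\in\mathbb{C}^n$ with $a_j-a_{j+1}=\mu_0(h_j)$ for $j=1,\dots,n-1$ and $a_j\notin\mathbb{Z}$ for all $j$; the defining equations determine $\mathbf{a}$ only up to a common additive constant, which can be adjusted generically so that the non-integrality condition is met. For such $\mathbf{a}$, formula \eqref{eq1} shows that $N(\mathbf{a})$ is itself pointed cuspidal with support $\mu_0+\mathbb{Z}\Delta$. The task then becomes to rescale the basis $\{v_\mu\}$ of $M$ so that $c_{i,j}(\mu)=a_j+b_j$ whenever $\mathbf{b}=\mu-\mu_0$, since with this normalization the assignment $\mathbf{v}_{\mathbf{b}}\mapsto v_{\mu_0+\mathbf{b}}$ immediately becomes a $\mathfrak{g}$-module isomorphism between two simple modules.

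The rescaling is performed inductively by propagating from a fixed $v_{\mu_0}$ along simple root directions and adjusting the target vector at each step so that the structure constant of the corresponding Chevalley generator matches its prescribed value. The main obstacle is proving this normalization is \emph{consistent}: different paths in the root lattice must yield the same vector, and the remaining non-simple structure constants $c_{i,j}$ must then automatically fall into the prescribed form. Consistency reduces to verifying a closed system of functional equations for the $c_{i,j}$, coming from the commutator identities $[e_{i,j},e_{j,k}]=e_{i,k}$ (which expresses $c_{i,k}$ as a product of $c_{i,j}$ and $c_{j,k}$), $[e_{i,j},e_{j,i}]=e_{i,i}-e_{j,j}$ (a first-order difference equation in $\mu$), and the Serre relations. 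A direct substitution confirms that $c_{i,j}(\mu)=a_j+b_j$ solves this system, and standard uniqueness---given the prescribed boundary data along the simple root axes---forces every bijective solution to agree with it, completing the identification $M\cong N(\mathbf{a})$.
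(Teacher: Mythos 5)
There is no proof to compare against in the paper itself: Theorem~\ref{thm5} is imported from \cite{BL1} without proof, so your proposal has to stand on its own, and as written it has two genuine gaps. First, the common additive constant in $\mathbf{a}$ cannot be ``adjusted generically'': it is an isomorphism invariant of $M$, not a free parameter. For the copy of $\mathfrak{sl}_2$ spanned by $e_{1,2},e_{2,1},h_1$, the Casimir $(h_1+1)^2+4e_{2,1}e_{1,2}$ acts on $N(\mathbf{a})$ by the scalar $(a_1+a_2+1)^2$, which changes when all $a_i$ are shifted by the same constant, while the support does not. A generic shift therefore produces a module with the correct support but the wrong central character, and no rescaling of the weight basis of $M$ can repair this, because quantities such as the scalar by which $e_{i,j}e_{j,i}$ acts on $M_\mu$ (equivalently $c_{i,j}(\mu)c_{j,i}(\mu+\varepsilon_i-\varepsilon_j)$) are invariant under rescaling. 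The constant must be extracted from $M$ itself, e.g.\ via a square root of the Casimir eigenvalue exactly as the paper does when it builds the operators $X_1,X_2$ in the proof of Lemma~\ref{lem12}, and the non-integrality of the resulting $a_i$ then has to be deduced from cuspidality of $M$, not arranged by genericity.

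Second, the concluding appeal to ``standard uniqueness'' begs the question. After gauge-fixing (rescaling one basis vector per weight, i.e.\ normalizing the $c_{i,j}$ only along a spanning tree of the weight lattice), the remaining gauge-invariant functions satisfy a nonlinear system of first-order difference equations coming from the commutator and Serre relations; checking that $c_{i,j}(\mu)=a_j+b_j$ solves it merely re-verifies that \eqref{eq1} defines a module, which is already stated in the paper. The actual content of the theorem is that this system admits \emph{no other} bijective solutions, and there is no off-the-shelf uniqueness principle that delivers this: the one-root difference equations have free integration constants in each lattice direction, and tying them all together into the single product form is the substantial computation carried out in \cite{BL1}. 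The paper's own uniqueness arguments in a much easier situation (Proposition~\ref{prop1001} and Lemmas~\ref{lem1002}, \ref{lem1003}, where most of the action is already prescribed and the extra hypothesis \eqref{assumption} is imposed) illustrate how much explicit work such a step requires; your proposal would need an argument of at least that level of detail at precisely the point where it currently has only an assertion.
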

Note that \eqref{eq1} defines a cuspidal module if and only if
$a_i\not\in\mathbb{Z}$ for all $i$.

\subsection{Connection to associative algebras}\label{s1.5}

The categories $\hat{\mathcal{C}}_{\lambda,\xi}$ and 
$\mathcal{C}_{\lambda,\xi}$ are length categories containing 
finitely many isomorphism classes of simple objects. 
Hence by abstract nonsense (see for example
\cite[Section~7]{Ga}), these categories are equivalent
to categories of finite length modules over some complete associative
algebras $\hat{D}_{\lambda,\xi}$ and $D_{\lambda,\xi}$, respectively.
The latter algebra is a quotient of the former, since $\mathcal{C}_{\lambda,\xi}$
is a subcategory of $\hat{\mathcal{C}}_{\lambda,\xi}$.
We will show that these categories are Ext-finite in the sense that $\operatorname{Ext}^1(M,N)$ is finite dimensional for any objects $M$ and $N$. For $k\in\mathbb{N}$ we denote by $\hat{\mathcal{C}}^k_{\lambda,\xi}$
the full subcategory of $\hat{\mathcal{C}}_{\lambda,\xi}$ given by all objects with Loewy lengths at most $k$. Analogously we define $\mathcal{C}^k_{\lambda,\xi}$. These categories contain then enough 
projectives and so are equivalent to module categories over some 
finite dimensional algebras (the opposite of the endomorphism algebras 
of a minimal projective generator).
The algebras $\hat{D}_{\lambda,\xi}$ and $D_{\lambda,\xi}$ are then limits of
these finite-dimensional algebras corresponding to the directed systems
given by the natural inclusions $\hat{\mathcal{C}}^k_{\lambda,\xi}\subset
\hat{\mathcal{C}}^{k+1}_{\lambda,\xi}$ and $\mathcal{C}^k_{\lambda,\xi}\subset
\mathcal{C}^{k+1}_{\lambda,\xi}$, respectively.

\subsection{Gelfand-Zetlin realization}\label{s1.6}

In this subsection we recall the realization of simple cuspidal modules
from \cite{Maz}, which uses the Gelfand-Zetlin approach.
For $i\in\{2,3,\dots\}$ set $\mathfrak{g}_i=\mathfrak{sl}_i$,
$U_i=U(\mathfrak{g}_i)$ and let $Z_i$ denote the center
of $U_i$. We consider $\mathfrak{g}_i$ as a subalgebra
of $\mathfrak{g}_{i+1}$ with respect to the embedding into the
left upper corner. Let $\Gamma=\Gamma_n$ denote the subalgebra of
$U=U_n$, generated by $\mathfrak{h}$
and all $Z_i$, $i\leq n$. The algebra $\Gamma$ is called the
{\em Gelfand-Zetlin} subalgebra of $U$ (\cite{DFO}). The algebra $\Gamma$ 
is a maximal commutative subalgebra of $U$ and is 
isomorphic to the polynomial algebra in $\frac{n(n+1)}{2}-1$ variables
(as generators one could use any basis of $\mathfrak{h}$ and any set of
generators for each $Z_i$, which, in turn, is a polynomial algebra in
$i-1$ variables). By \cite[Lemma~3.2]{Ov}, $U$ is free both as left and
as right $\Gamma$-module.

For a $\mathfrak{g}$-module $M$ and a homomorphism
$\chi:\Gamma\to\mathbb{C}$ set
\begin{displaymath}
M_{\chi}=\{v\in M\vert (g-\chi(g))^kv=0\text{ for all }g\in\Gamma
\text{ and }k\gg 0\}.
\end{displaymath}
The module $M$ is called a {\em Gelfand-Zetlin} module provided
that $M=\oplus_{\chi}M_{\chi}$ and all $M_{\chi}$ are finite dimensional.
As $\Gamma$ is commutative and contains $\mathfrak{h}$,
any generalized weight module with finite-dimensional weight spaces,
in particular, any module from $\hat{\mathcal{C}}$,
is obviously a Gelfand-Zetlin module. Conversely, 
any  Gelfand-Zetlin  module is weight, but with
infinite dimensional weight spaces in general (see \cite{DFO} 
for details).

A character $\chi:\Gamma\to\mathbb{C}$ is usually described by the
corresponding {\em Gelfand-Zetlin tableau}, that means an equivalence 
class of patterns of the form
\begin{equation}\label{gztableau}
\begin{picture}(160.00,140.00)
\drawline(10.00,130.00)(150.00,130.00)
\drawline(10.00,110.00)(150.00,110.00)
\drawline(20.00,90.00)(140.00,90.00)
\drawline(30.00,70.00)(130.00,70.00)
\drawline(50.00,50.00)(110.00,50.00)
\drawline(60.00,30.00)(100.00,30.00)
\drawline(70.00,10.00)(90.00,10.00)
\drawline(10.00,130.00)(10.00,110.00)
\drawline(30.00,130.00)(30.00,110.00)
\drawline(50.00,130.00)(50.00,110.00)
\drawline(70.00,130.00)(70.00,110.00)
\drawline(90.00,130.00)(90.00,110.00)
\drawline(150.00,130.00)(150.00,110.00)
\drawline(130.00,130.00)(130.00,110.00)
\drawline(20.00,90.00)(20.00,110.00)
\drawline(40.00,90.00)(40.00,110.00)
\drawline(60.00,90.00)(60.00,110.00)
\drawline(80.00,90.00)(80.00,110.00)
\drawline(120.00,90.00)(120.00,110.00)
\drawline(140.00,90.00)(140.00,110.00)
\drawline(30.00,70.00)(30.00,90.00)
\drawline(50.00,70.00)(50.00,90.00)
\drawline(70.00,70.00)(70.00,90.00)
\drawline(130.00,70.00)(130.00,90.00)
\drawline(110.00,70.00)(110.00,90.00)
\drawline(70.00,10.00)(70.00,30.00)
\drawline(90.00,10.00)(90.00,30.00)
\drawline(60.00,50.00)(60.00,30.00)
\drawline(80.00,50.00)(80.00,30.00)
\drawline(100.00,50.00)(100.00,30.00)
\drawline(50.00,50.00)(50.00,52.00)
\drawline(110.00,50.00)(110.00,52.00)
\drawline(120.00,70.00)(120.00,68.00)
\drawline(40.00,70.00)(40.00,68.00)
\put(70.00,60.00){\makebox(0,0)[cc]{$\cdots$}}
\put(90.00,60.00){\makebox(0,0)[cc]{$\cdots$}}
\put(110.00,120.00){\makebox(0,0)[cc]{$\cdots$}}
\put(100.00,100.00){\makebox(0,0)[cc]{$\cdots$}}
\put(90.00,80.00){\makebox(0,0)[cc]{$\cdots$}}
\put(80.00,20.00){\makebox(0,0)[cc]{{\tiny $y_{1}$}}}
\put(70.00,40.00){\makebox(0,0)[cc]{{\tiny $y_{2}$}}}
\put(90.00,40.00){\makebox(0,0)[cc]{{\tiny $a_{2}^{2}$}}}
\put(20.00,120.00){\makebox(0,0)[cc]{{\tiny $m_{1}$}}}
\put(40.00,120.00){\makebox(0,0)[cc]{{\tiny $m_{2}$}}}
\put(60.00,120.00){\makebox(0,0)[cc]{{\tiny $m_{3}$}}}
\put(80.00,120.00){\makebox(0,0)[cc]{{\tiny $m_{4}$}}}
\put(140.00,120.00){\makebox(0,0)[cc]{{\tiny $m_{n}$}}}
\put(30.00,100.00){\makebox(0,0)[cc]{{\tiny $y_{n\text{-}1}$}}}
\put(40.00,80.00){\makebox(0,0)[cc]{{\tiny $y_{n\text{-}2}$}}}
\put(50.00,100.00){\makebox(0,0)[cc]{{\tiny $a^{n\text{-}1}_{2}$}}}
\put(70.00,100.00){\makebox(0,0)[cc]{{\tiny $a^{n\text{-}1}_{3}$}}}
\put(130.00,100.00){\makebox(0,0)[cc]{{\tiny $a^{n\text{-}1}_{n\text{-}1}$}}}
\put(60.00,80.00){\makebox(0,0)[cc]{{\tiny $a^{n\text{-}2}_{2}$}}}
\put(120.00,80.00){\makebox(0,0)[cc]{{\tiny $a^{n\text{-}2}_{n\text{-}2}$}}}
\end{picture} 
\end{equation}
where entries are complex numbers. Two patterns are in the same 
equivalence class if they differ by a permutation which only permutes 
entries inside rows (hence keeps the multiset of entries in each row fixed). 
Given such a tableau, the 
corresponding values of $\chi$ on standard generators of $\Gamma$ are computed 
as certain shifted symmetric functions in the entries of the tableau, 
see  \cite{DFO} for details. We will not need these explicit formulae.
The number of entries in a tableau exceeds the number of generators of
$\Gamma$ by one. This is due to the fact that the Gelfand-Zetlin
combinatorics is usually used for the Lie algebra $\mathfrak{gl}_n$
(insted of $\mathfrak{sl}_n$) where we have an extra central element.

One of the main advantages of Gelfand-Zetlin modules is that many 
Gelfand-Zetlin modules admit a so-called {\em tableau realization},
that is an explicit combinatorial construction in which a basis of the
module is indexed by a set of tableaux (defined by imposing some conditions
on the entries) and the action of the generators $e_{i,i+1}$ and
$e_{i+1,i}$ of $\mathfrak{g}$ is given by the so-called
{\em Gelfand-Zetlin formulae}, see \cite{DFO}, \cite{Maz0}, 
\cite{Maz} for details. In such a realization the tableaux 
represents a basis of common eigenvectors for $\Gamma$
(as mentioned above, the corresponding eigenvalues are computed as
certain symmetric functions), and the action of $e_{i,i+1}$ 
(resp. $e_{i+1,i}$) maps a basis vector corresponding to a tableau $t$ 
to a linear combination (with some coefficients) 
of basis vectors corresponding to 
tableaux obtained from $t$ by adding (resp. subtracting) the complex number 
$1$ to one of the entries in the $i$-th row from the bottom. The 
coefficients can be expressed as certain rational functions in the entries of 
the $i$-th and $(i+1)$-st (resp. the $i$-th and $(i-1)$-st) rows. 
Again, we will not need these explicit formulae and refer to \cite{DFO} 
and \cite{Maz} for details. 
The Gelfand-Zetlin formulae have denominators in which all
possible differences between entries in the same row (up to row $n-1$)
occur. Hence a necessary condition for the existence of a tableau 
realization is that we only use tableaux without multiple entries
in the rows $2$ to $n-1$.

Examples of modules that have such a realization 
include almost all simple cuspidal modules.
Choose arbitrary $\mathbf{m}=(m_1,m_2,\dots,m_n)\in\mathbb{C}^n$ 
and $\mathbf{x}=(x_1,x_2,\dots,x_{n-1})\in\mathbb{C}^{n-1}$
such that $m_i-m_{i+1}\in\mathbb{N}$ for all $i>1$, 
$x_i-x_{i+1}\not\in\mathbb{Z}$ for all $i$,
$x_{n-1}-m_1\not\in\mathbb{Z}$ and  $x_{i}-m_2\not\in\mathbb{Z}$ 
for all $i$. Let ${T}(\mathbf{m},\mathbf{x})$ consist 
of all tableaux of the form \eqref{gztableau} such that the 
following conditions are satisfied: $y_i-x_i\in\mathbb{Z}$, 
$a^{i}_{j}-a^{i-1}_{j}\in\mathbb{Z}_+$, 
$a^{i-1}_{j}-a^{i}_{j+1}\in \mathbb{N}$, 
$m_i-a^{n-1}_{i}\in\mathbb{Z}_+$
and $a^{n-1}_{i}-m_{i+1}\in \mathbb{N}$
whenever the expression makes sense. Let 
$M(\mathbf{m},\mathbf{x})$ denote the vector space with 
basis $T(\mathbf{m},\mathbf{x})$. Then Gelfand-Zetlin formulae
define on $M(\mathbf{m},\mathbf{x})$ the structure of a cuspidal
$\mathfrak{g}$-module (\cite{Maz}). The vector
$\mathbf{m}$ corresponds to $\lambda$. If $\lambda$ is non-integral
(i.e. $m_1-m_2\not\in\mathbb{Z}$) or if $\lambda$ is singular
(i.e. $m_1=m_s$ for some $s>1$), then the module
$M(\mathbf{m},\mathbf{x})$ is simple. It is pinned provided that
$m_i=m_{i+1}+1$ for all $i>1$.

For integral regular $\lambda$ (i.e. $m_1-m_2\in\mathbb{Z}$,
$m_1\neq m_s$ for all $s>1$) the situation is slightly more complicated.
In this case the module $M(\mathbf{m},\mathbf{x})$ is always
indecomposable. It is simple if only if
$m_1-m_2\in\mathbb{N}$ or $m_n-m_1\in \mathbb{N}$. In all other cases
this module has length two. Its unique proper submodule is the linear
span of all tableaux satisfying the additional condition as follows:
Let $s\in\{2,3,\dots,n-1\}$ be such that $m_s-m_1\in\mathbb{N}$
and $m_1-m_{s+1}\in \mathbb{N}$. Set $k_1=m_s$,
$k_2=m_2$, $k_3=m_3$,\dots, $k_{s-1}=m_{s-1}$, 
$k_s=m_1$, $k_{s+1}=m_{s+1}$,\dots, $k_n=m_n$. 
In this notation the additional condition reads: 
$k_i-a^{n-1}_{i}\in\mathbb{Z}_+$ and 
$a^{n-1}_{i}-k_{i+1}\in \mathbb{N}$ whenever the expression 
makes sense. The quotient has a basis given by all other tableaux.

\section{Blocks in cases \eqref{case1} and \eqref{case2}}\label{s2}

Our main result in this section is the following statement:

\begin{theorem}\label{thm6}
Let $\lambda$ be as in case \eqref{case1} or \eqref{case2} and
$\xi$ be such that $\hat{\mathcal{C}}_{\lambda,\xi}$ is nonzero.
\begin{enumerate}[(i)]
\item\label{thm6.1} The category $\hat{\mathcal{C}}_{\lambda,\xi}$
is equivalent to the category of finite dimensional
$\mathbb{C}[[x_1,x_2,\dots,x_n]]$-modules.
\item\label{thm6.2} The category $\mathcal{C}_{\lambda,\xi}$
is equivalent to the category of finite dimensional
$\mathbb{C}[[x]]$-modules.
\end{enumerate}
\end{theorem}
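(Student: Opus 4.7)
The plan is to realise each block as the module category of a canonical complete local ring governing a flat deformation of the unique simple object. By Proposition~\ref{prop3} together with Corollary~\ref{cor5} and Proposition~\ref{prop4} we may assume that the unique simple $L\in\hat{\mathcal{C}}_{\lambda,\xi}$ (which exists by Corollary~\ref{cor2}(i)) is pointed, so by Theorem~\ref{thm5} it has the form $L=N(\mathbf{a}_0)$ with $a_{0,j}\notin\mathbb{Z}$ for every $j$. Set $\mathcal{R}:=\mathbb{C}[[t_1,\dots,t_n]]$ and let $\tilde{L}$ be the free $\mathcal{R}$-module with basis $\{\mathbf{v}_{\mathbf{b}}\}$ on which $\mathfrak{g}$ acts via formula~\eqref{eq1} after substituting $a_{0,j}+t_j$ for $a_j$. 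Since the $\mathfrak{sl}_n$-relations hold on $N(\mathbf{a})$ for every $\mathbf{a}$ and depend polynomially on $\mathbf{a}$, this defines a $(\mathfrak{g},\mathcal{R})$-bimodule structure on $\tilde{L}$; by construction $\tilde{L}$ is flat over $\mathcal{R}$ and has closed fibre $L$.

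The assignment $F(M):=\tilde{L}\otimes_{\mathcal{R}}M$ yields an exact functor $\mathcal{R}\textup{-mod}_{\mathrm{fd}}\to\hat{\mathcal{C}}_{\lambda,\xi}$: the generalised weight spaces of $F(M)$ are $\mathbb{C}$-isomorphic to $M$ (hence finite-dimensional), each root vector acts bijectively because its coefficient $a_{0,j}+t_j+b_j$ is a unit in $\mathcal{R}$, and the centre acts through $Z\to\mathcal{R}\to\operatorname{End}_{\mathbb{C}}(M)$ whose reduction modulo the maximal ideal recovers $\chi_\lambda$. For part~\eqref{thm6.2}, note that $h_i=e_{i,i}-e_{i+1,i+1}$ acts on $\mathbf{v}_{\mathbf{b}}\otimes m$ with nilpotent contribution $(t_i-t_{i+1})m$, so $F(M)\in\mathcal{C}_{\lambda,\xi}$ exactly when $M$ factors through the quotient $\mathcal{S}:=\mathcal{R}/(t_1-t_2,\dots,t_{n-1}-t_n)\cong\mathbb{C}[[t]]$, producing the restricted functor $F|_{\mathcal{S}}\colon\mathcal{S}\textup{-mod}_{\mathrm{fd}}\to\mathcal{C}_{\lambda,\xi}$.

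It then remains to prove that $F$ and $F|_{\mathcal{S}}$ are equivalences, which, via Section~\ref{s1.5}, amounts to showing that the induced maps between complete local algebras $\mathcal{R}\to\hat{D}_{\lambda,\xi}$ and $\mathcal{S}\to D_{\lambda,\xi}$ are isomorphisms. Full faithfulness follows from the identification $\operatorname{End}_{\mathfrak{g}}(\tilde{L})=\mathcal{R}$: an endomorphism preserves the weight decomposition and acts on each line $\mathcal{R}\mathbf{v}_{\mathbf{b}}$ by multiplication by some $r_{\mathbf{b}}\in\mathcal{R}$; compatibility with the $e_{i,j}$ together with the invertibility of the coefficients forces all $r_{\mathbf{b}}$ to agree.

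The main obstacle is the precise computation $\dim\operatorname{Ext}^1_{\hat{\mathcal{C}}_{\lambda,\xi}}(L,L)=n$ and $\dim\operatorname{Ext}^1_{\mathcal{C}_{\lambda,\xi}}(L,L)=1$: surjectivity of the maps above on Zariski tangent spaces reduces to these equalities, and then the flatness of $\tilde{L}$ automatically promotes tangent-space surjectivity to surjectivity of the ring map at every finite order. The lower bounds are built into the family $\tilde{L}$ via its first-order thickenings along $t_1,\dots,t_n$ (respectively along the surviving direction $(1,\dots,1)$ in the weight case). For the matching upper bounds I will analyse a general self-extension $0\to L\to E\to L\to 0$: choosing lifts $\mathbf{v}^{(2)}_{\mathbf{b}}\in E$ of a basis $\mathbf{v}^{(1)}_{\mathbf{b}}$ of the submodule $L$, the defect $e_{i,j}\mathbf{v}^{(2)}_{\mathbf{b}}-(a_{0,j}+b_j)\mathbf{v}^{(2)}_{\mathbf{b}+\varepsilon_i-\varepsilon_j}\in\mathbb{C}\mathbf{v}^{(1)}_{\mathbf{b}+\varepsilon_i-\varepsilon_j}$ assembles into a cocycle which, modulo coboundaries coming from rebasings $\mathbf{v}^{(2)}_{\mathbf{b}}\mapsto\mathbf{v}^{(2)}_{\mathbf{b}}+g(\mathbf{b})\mathbf{v}^{(1)}_{\mathbf{b}}$, is determined by $n$ scalars $f_1,\dots,f_n$ coming from the diagonal terms $i=j$; requiring semisimplicity of $\mathfrak{h}$ then imposes $f_i=f_{i+1}$ and singles out the one-dimensional subspace matching the weight case.
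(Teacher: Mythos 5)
Your construction of the family $\tilde{L}$ and of the functor $F=\tilde{L}\otimes_{\mathcal{R}}-$ is exactly the paper's functor $\mathrm{F}$ (Lemma~\ref{lem7}), and your treatment of part~\eqref{thm6.2} by imposing $t_i=t_{i+1}$ agrees with Subsection~\ref{s2.4}. The genuine gap is in essential surjectivity. Your bootstrap -- ``$\dim\operatorname{Ext}^1(L,L)=n$, and flatness of $\tilde{L}$ automatically promotes tangent-space surjectivity to surjectivity of the ring map at every finite order'' -- is not a valid implication. A family over a \emph{commutative} base such as $\mathcal{R}=\mathbb{C}[[t_1,\dots,t_n]]$ only probes the maximal commutative quotient of the block algebra $\hat{D}_{\lambda,\xi}$, and a bound on $\operatorname{Ext}^1(L,L)$ only controls $\mathfrak{m}/\mathfrak{m}^2$ of $\hat{D}_{\lambda,\xi}$, not its relations: a pseudocompact local algebra with $n$-dimensional cotangent space is a quotient of the completed \emph{free} algebra on $n$ generators, not of $\mathbb{C}[[x_1,\dots,x_n]]$. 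Concretely, for $D=\mathbb{C}\langle x,y\rangle/\mathfrak{m}^3$ and $R=\mathbb{C}[x,y]/\mathfrak{m}^3$ the restriction functor along $D\twoheadrightarrow R$ is exact, fully faithful, sends the simple to the simple and is surjective on $\operatorname{Ext}^1$ between simples, yet it is not essentially surjective; so full faithfulness of $F$ plus surjectivity on $\operatorname{Ext}^1(L,L)$ does not yield density. What is needed is surjectivity of $F$ on $\operatorname{Ext}^1(L,FM)$ for \emph{arbitrary} finite-dimensional $M$ (equivalently, that $\hat{D}_{\lambda,\xi}$ has no noncommutative ``fuzz''), and this is precisely where the paper invests its main effort: the commutativity of $\hat{D}_{\lambda,\xi}$ for $\mathfrak{sl}_3$ via the Gelfand--Zetlin subalgebra (Proposition~\ref{prop9}--Corollary~\ref{cor10}), and then the induction on $n$ with the unique-extension statement (Proposition~\ref{prop1001}, Lemmas~\ref{lem1002}, \ref{lem1003}), carried out under the standing assumption $a_i+a_j\notin\mathbb{Z}$ arranged by Proposition~\ref{prop3}. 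Nothing in your proposal replaces this step.

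Two secondary points. First, your full-faithfulness argument begs the question at the same place the paper has to work: a $\mathfrak{g}$-endomorphism of $\tilde{L}$ (or of $F(M)$) acts on each generalized weight space by a $\mathbb{C}$-linear map, and the Cartan action only forces it to commute with the differences $X_i-X_{i+1}$; asserting that it is ``multiplication by some $r_{\mathbf{b}}\in\mathcal{R}$'' assumes $\mathcal{R}$-linearity. The paper closes this using the $\mathfrak{sl}_2$-Casimir $(h_1+1)^2+4e_{21}e_{12}$, the invertibility of $(a_1+a_2+1)\mathrm{Id}+X_1+X_2$ (here the assumption $a_1+a_2\notin\mathbb{Z}$ from Proposition~\ref{prop3} is essential), and a polynomial square root (Subsection~\ref{s2.2}); you would need this or an equivalent argument. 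Second, your upper bound $\dim\operatorname{Ext}^1_{\hat{\mathcal{C}}_{\lambda,\xi}}(L,L)\le n$ is asserted, not proved: the claim that the self-extension cocycle is, modulo rebasings, determined by $n$ diagonal scalars requires checking all commutation and Serre relations across the weight lattice, a computation of the same nature as Lemmas~\ref{lem1002}--\ref{lem1003}; note the paper obtains $\dim\operatorname{Ext}^1(L,L)=n$ only \emph{a posteriori} (Corollary~\ref{cor16}), as a consequence of the theorem. Even granting this bound, the first paragraph shows the proof does not close.
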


Theorem~\ref{thm6} says that $\hat{D}_{\lambda,\xi}\cong
\mathbb{C}[[x_1,x_2,\dots,x_n]]$ and $D_{\lambda,\xi}\cong
\mathbb{C}[[x]]$. The second statement of Theorem~\ref{thm6}
is contained in \cite[Section~5]{GS}. For $n=2$ both statements
are true for all $\lambda$ (even for case \eqref{case3}). This 
special case is at least partly an unpublished result of P. Gabriel
(see \cite[7.8.16]{Di}), and is completely contained in \cite{Dr} (see e.g. \cite[Chapter~3]{Maz2} for detailed proofs).

The categories $\hat{\mathcal{C}}_{\lambda,\xi}$ (and then also
$\mathcal{C}_{\lambda,\xi}$) appearing in Theorem \ref{thm6} contain a unique (up to isomorphism)
simple object thanks to Corollary~\ref{cor2}. By Corollary~\ref{cor5}, we may assume (for the rest of this section) this
simple module to be pointed, and so isomorphic to
$N(\mathbf{a})$ for some $\mathbf{a}\in \mathbb{C}^n$, as described
in Subsection~\ref{s1.4}. We are going to prove claim \eqref{thm6.1}
and then deduce claim \eqref{thm6.2} by restriction. We first need
some preparation, the actual proof will then finally appear in
Subsection~\ref{s2.4}.

\begin{corollary}\label{cor16}
\begin{enumerate}[(i)]
\item\label{cor16.1} 
Let $L$ is a simple object in $\hat{\mathcal{C}}^n_{\lambda,\xi}$ then $\dim \mathrm{Ext}^1_{\mathfrak{g}}(L,L)=n$, in particular, this
space is finite dimensional.
\item\label{cor16.2}
For any $M,N\in \hat{\mathcal{C}}_{\lambda,\xi}$ we have
$\mathrm{Ext}^1_{\mathfrak{g}}(M,N)<\infty $.
\end{enumerate}
\end{corollary}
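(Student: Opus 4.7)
By Corollary~\ref{cor5} I may assume the unique simple object of the block (unique by Corollary~\ref{cor2}) is the pointed cuspidal module $L=N(\mathbf{a})$ with $\mathbf{a}=(a_1,\dots,a_n)\in\mathbb{C}^n$, $a_i\notin\mathbb{Z}$, realized as in Subsection~\ref{s1.4}. For part \eqref{cor16.1} the plan is to exhibit, for each $i=1,\dots,n$, an explicit self-extension $E_i$ of $L$ coming from an infinitesimal shift of the parameter $a_i$, show that $[E_1],\dots,[E_n]$ are linearly independent in $\mathrm{Ext}^1_{\mathfrak{g}}(L,L)$, and then bound this Ext from above by $n$. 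Part \eqref{cor16.2} will then follow from \eqref{cor16.1} by a standard double induction on lengths, which applies because every object of $\hat{\mathcal{C}}_{\lambda,\xi}$ has finite length (weight spaces are finite-dimensional and only the class of $L$ appears as a subquotient).

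\textbf{Explicit classes and linear independence.} For each $i\in\{1,\dots,n\}$, let $E_i$ be the free $\mathbb{C}[t]/(t^2)$-module on the basis $\{\mathbf{v}_{\mathbf{b}}\mid\sum_j b_j=0\}$ of Subsection~\ref{s1.4}, equipped with the $\mathfrak{g}$-action
\[
e_{k,j}\cdot\mathbf{v}_{\mathbf{b}}=(a_j+b_j+\delta_{ij}\,t)\,\mathbf{v}_{\mathbf{b}+\varepsilon_k-\varepsilon_j}.
\]
Since the defining relations are polynomial in the $a_j$, this is a well-defined $\mathfrak{g}$-module over $\mathbb{C}[t]/(t^2)$ whose reduction modulo $t$ equals $L$; hence each $E_i$ yields a class $[E_i]\in\mathrm{Ext}^1_{\mathfrak{g}}(L,L)$. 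To prove linear independence, consider for $\mathbf{c}=(c_1,\dots,c_n)$ the combination $E_{\mathbf{c}}$ obtained by replacing $\delta_{ij}t$ by $c_j t$ and search for a splitting $\mathbf{v}_{\mathbf{b}}\mapsto(1+t\gamma_{\mathbf{b}})\mathbf{v}_{\mathbf{b}}$. Comparing the two actions of $e_{k,j}$ reduces to the difference equation $\gamma_{\mathbf{b}+\varepsilon_k-\varepsilon_j}-\gamma_{\mathbf{b}}=-c_j/(a_j+b_j)$. Summing this along the closed $(i,j)$-loop $\mathbf{b}\to\mathbf{b}+\varepsilon_i-\varepsilon_j\to\mathbf{b}$ gives the obstruction $c_j/(a_j+b_j)+c_i/(a_i+b_i+1)=0$ for all admissible $\mathbf{b}$; varying $\mathbf{b}$ and using $a_k\notin\mathbb{Z}$ forces $\mathbf{c}=0$.

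\textbf{Upper bound and deduction of \eqref{cor16.2}.} The main obstacle is the bound $\dim\mathrm{Ext}^1_{\mathfrak{g}}(L,L)\le n$. Since the block has a unique simple, the discussion of Subsection~\ref{s1.5} identifies $\mathrm{Ext}^1_{\mathfrak{g}}(L,L)$ with the Zariski cotangent space $\mathfrak{m}/\mathfrak{m}^2$ of the complete local algebra $\hat{D}_{\lambda,\xi}$, so it is enough to bound the number of generators of $\mathfrak{m}$ by $n$. My approach would be to use the Gelfand--Zetlin realization of Subsection~\ref{s1.6}: any square-zero deformation of $L$ restricts to an infinitesimal perturbation of its Gelfand--Zetlin character, and the combined constraints of being cuspidal, having central character $\chi_\lambda$, and preserving the support coset $\xi$ cut the tangent space of $\mathrm{Spec}\,\Gamma$ down to exactly the $n$ directions realized by the family $\mathbf{a}\mapsto\mathbf{a}+t\varepsilon_i$ of the previous step. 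Finally, for \eqref{cor16.2}, the long exact sequences of $\mathrm{Ext}^*_{\mathfrak{g}}(-,N)$ and $\mathrm{Ext}^*_{\mathfrak{g}}(M,-)$ applied to composition series of $M$ and $N$ reduce the finite-dimensionality of $\mathrm{Ext}^1_{\mathfrak{g}}(M,N)$ by double induction to the case $M=N=L$ already handled by \eqref{cor16.1}.
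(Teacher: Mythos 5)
Your lower bound is essentially correct: your modules $E_{\mathbf c}$ are precisely the images under the functor $\mathrm{F}$ of Subsection~\ref{s2.2} of the two-dimensional $\mathbb{C}[[x_1,\dots,x_n]]$-modules on which $x_j$ acts by $c_jt$, and your splitting computation shows that $\mathbf c\mapsto [E_{\mathbf c}]$ is injective into $\mathrm{Ext}^1_{\mathfrak g}(L,L)$ (extension-closedness of $\hat{\mathcal C}_{\lambda,\xi}$ in $\mathfrak g$-mod ensures these classes are computed in the right place). Likewise your reduction of \eqref{cor16.2} to \eqref{cor16.1} by induction on lengths is exactly the paper's argument. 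The genuine gap is the upper bound $\dim\mathrm{Ext}^1_{\mathfrak g}(L,L)\le n$, which you yourself flag as ``the main obstacle'' and for which you offer only a plan, not a proof. Identifying $\mathrm{Ext}^1_{\mathfrak g}(L,L)$ with (the dual of) $\mathfrak m/\mathfrak m^2$ for $\hat D_{\lambda,\xi}$ is harmless, but bounding that cotangent space is precisely the content of Theorem~\ref{thm6}, from which the paper deduces the whole corollary in two lines; the hard work is the fullness and density of $\mathrm{F}$ established in Subsections~\ref{s2.2}--\ref{s2.4}.

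Concretely, your Gelfand--Zetlin tangent-space argument is not carried out and faces two real obstacles. First, to read off a self-extension of $L$ from an ``infinitesimal perturbation of its Gelfand--Zetlin character'' you need the restriction functor $M\mapsto M^{\mu}$ to a single generalized weight space to detect nontrivial extensions; the paper proves statements of this kind (Lemma~\ref{lem8-1}, Corollary~\ref{cor8-3}) only for $\mathfrak{sl}_3$, via a multiplicity-one computation that it explicitly says does not generalize to $n>3$, and even there it yields only finiteness (Corollary~\ref{cor11}), not the exact bound. Second, the tangent space of $\operatorname{Spec}\Gamma$ at $\chi$ has dimension $\tfrac{n(n+1)}{2}-1$, so the assertion that cuspidality, the central character and the support coset cut it down ``to exactly the $n$ directions'' is exactly what has to be proved; making it precise amounts to classifying all length-two objects of the block, i.e.\ to the unique-extension analysis of Proposition~\ref{prop1001} and Lemmata~\ref{lem1002}, \ref{lem1003} that constitutes the proof of Theorem~\ref{thm6}. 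As written, your argument establishes only $\dim\mathrm{Ext}^1_{\mathfrak g}(L,L)\ge n$: either invoke Theorem~\ref{thm6} (and then the explicit cocycle construction is unnecessary, as in the paper), or you must supply an upper-bound argument of comparable depth.
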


\begin{proof}
As $\hat{\mathcal{C}}_{\lambda,\xi}$ is extension closed in
$\mathfrak{g}\text{-}\mathrm{mod}$, claim \eqref{cor16.1}
follows directly from Theorem~\ref{thm6} and the corresponding
claim for $\mathbb{C}[[x_1,x_2,\dots,x_n]]$. Claim \eqref{cor16.2} follows
from claim \eqref{cor16.1} by standard induction on the 
lengths of $M$ and $N$.
\end{proof}

\subsection{A functor $\mathrm{F}$ 
from $\mathbb{C}[[x_1,x_2,\dots,x_n]]$-modules
to $\hat{\mathcal{C}}_{\lambda,\xi}$}\label{s2.2}

Let $V$ be a finite dimensional
$\mathbb{C}[[x_1,x_2,\dots,x_n]]$-module, hence $x_i$ acts via
some endomorphism which we call $X_i$.
For every $\mathbf{b}\in\mathbb{Z}^n$, $b_1+b_2+\dots+b_n=0$,
fix a copy, $V_{\mathbf{b}}$, of $V$ and consider the vector
space $\mathrm{F}V:=\oplus_{\mathbf{b}}V_{\mathbf{b}}$.

\begin{lemma}\label{lem7}
The space $\mathrm{F}V$ can be turned into a $\mathfrak{g}$-module,
where $e_{i,j}$ acts via $v\mapsto (X_j+(a_j+b_j)\mathrm{Id}_V)v\in V_{\mathbf{b}+\varepsilon_{i}-\varepsilon_{j}},$ and $h_i$ acts via
\begin{displaymath}
v\mapsto (X_i-X_{i-1}+(a_i+b_i-a_{i+1}-b_{i+1})\mathrm{Id}_V)v 
\in V_{\mathbf{b}}
\end{displaymath}
for $v\in V_{\mathbf{b}}$. Obviously, every $v\in V_{\mathbf{b}}$
annihilated by all $X_i$'s is a weight vector.
\end{lemma}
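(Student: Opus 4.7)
The plan is to extend the proposed formulas to an action of the full $\mathfrak{gl}_n$ (declaring $e_{i,i}$ to act on $V_{\mathbf{b}}$ by $v\mapsto (X_i+(a_i+b_i)\mathrm{Id}_V)v$, with trivial shift since $\varepsilon_i-\varepsilon_i=0$) and then restrict to $\mathfrak{sl}_n$. For bookkeeping I would introduce, for each $k$, the endomorphism $Y_k$ of $\mathrm{F}V$ acting on $V_{\mathbf{b}}$ as $X_k+(a_k+b_k)\mathrm{Id}_V$, and, for each ordered pair $(i,j)$, the shift operator $T_{i,j}$ sending $V_{\mathbf{b}}$ identically onto $V_{\mathbf{b}+\varepsilon_i-\varepsilon_j}$. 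In this notation the proposed action reads $e_{i,j}=T_{i,j}Y_j$, and consequently $h_i=e_{i,i}-e_{i+1,i+1}$ will act on $V_{\mathbf{b}}$ as $Y_i-Y_{i+1}=X_i-X_{i+1}+(a_i+b_i-a_{i+1}-b_{i+1})\mathrm{Id}_V$, matching the formula stated in the lemma.

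The three facts needed are: (a) the $Y_k$ pairwise commute, since $V$ is a $\mathbb{C}[[x_1,\dots,x_n]]$-module and hence the $X_k$ pairwise commute; (b) the $T_{i,j}$ pairwise commute, with $T_{i,j}T_{k,l}$ equal to the single shift by $\varepsilon_i-\varepsilon_j+\varepsilon_k-\varepsilon_l$; and (c) $Y_kT_{i,j}=T_{i,j}(Y_k+(\delta_{ik}-\delta_{jk})\mathrm{Id}_V)$, which merely records the change in $b_k$ after shifting. All three are immediate from the definitions, and together they yield
\begin{displaymath}
[e_{i,j},e_{k,l}]=T_{i,j}T_{k,l}\bigl[(Y_j+\delta_{kj}-\delta_{lj})Y_l-(Y_l+\delta_{il}-\delta_{jl})Y_j\bigr].
\end{displaymath}
By (a) the $Y_jY_l$ terms cancel, and the bracket simplifies to
\begin{displaymath}
T_{i,j}T_{k,l}\bigl[\delta_{jk}Y_l-\delta_{il}Y_j+\delta_{jl}(Y_j-Y_l)\bigr],
\end{displaymath}
whose last summand vanishes because $\delta_{jl}\neq 0$ forces $j=l$ and hence $Y_j=Y_l$. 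Using $T_{i,j}T_{j,l}=T_{i,l}$ and $T_{l,j}T_{k,l}=T_{k,j}$ from (b), one reads off the $\mathfrak{gl}_n$-relation $[e_{i,j},e_{k,l}]=\delta_{jk}e_{i,l}-\delta_{il}e_{k,j}$.

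Restricting this $\mathfrak{gl}_n$-action to $\mathfrak{sl}_n$ gives the desired $\mathfrak{g}$-action on $\mathrm{F}V$, and the observation about weight vectors is immediate from the explicit $h_i$-formula above. The only real challenge is the bookkeeping in the bracket computation; the $Y_k$/$T_{i,j}$ formalism manages this cleanly, and a useful sanity check is that taking $V=\mathbb{C}$ with $X_k=0$ recovers precisely the action~\eqref{eq1} on the simple pointed cuspidal module $N(\mathbf{a})$ of Theorem~\ref{thm5}.
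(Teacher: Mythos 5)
Your proof is correct, but it takes a genuinely different route from the paper's. You verify the full set of $\mathfrak{gl}_n$ matrix-unit relations $[e_{i,j},e_{k,l}]=\delta_{jk}e_{i,l}-\delta_{il}e_{k,j}$ by direct computation, using the factorization $e_{i,j}=T_{i,j}Y_j$ into a shift operator and a multiplication operator together with the commutation rule $Y_kT_{i,j}=T_{i,j}(Y_k+(\delta_{ik}-\delta_{jk})\mathrm{Id}_V)$; the displayed bracket computation is accurate, and restriction to $\mathfrak{sl}_n$ gives the claim. The paper instead avoids all bracket bookkeeping by a specialization argument: since \eqref{eq1} defines a $\mathfrak{g}$-module for \emph{every} $\mathbf{a}\in\mathbb{C}^n$, the defining relations in the generators $e_{i,i\pm 1}$, evaluated on the basis vectors $\mathbf{v}_{\mathbf{b}}$, are polynomial identities in the $a_j$'s valid for all complex values, hence formal identities; writing $X_j+(a_j+b_j)\mathrm{Id}_V=A_j+B_j$ with the commuting operators $A_j=X_j+a_j\mathrm{Id}_V$ and $B_j=b_j\mathrm{Id}_V$, one may substitute these into the formal identities, so the relations hold on $\mathrm{F}V$. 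The paper's argument is shorter and computation-free but presupposes that \eqref{eq1} is a module for arbitrary $\mathbf{a}$ and leaves the formula for $h_i$ to a direct check; your argument is self-contained, verifies the full $\mathfrak{gl}_n$ presentation rather than just the Chevalley--Serre one, and reproves the input fact as the special case $V=\mathbb{C}$, $X_k=0$. One small point: your computation yields $h_i$ acting by $X_i-X_{i+1}+(a_i+b_i-a_{i+1}-b_{i+1})\mathrm{Id}_V$, whereas the lemma as printed has $X_i-X_{i-1}$; the latter is evidently a typo (compare the scalar term and the formula used in Subsection~\ref{s2.4}), so the formula you derived is the intended one.
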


\begin{proof}
Consider the $\mathfrak{g}$-module $N(\mathbf{a})$ for  $\mathbf{a}$
as above. Then, for every $\mathbf{b}$ the defining relations of
$\mathfrak{g}$ (in generators $e_{i,i\pm1}$), 
applied to $\mathbf{v}_{\mathbf{b}}$, can be written as 
some polynomial equations in the $a_i$'s. Since \eqref{eq1} defines a 
$\mathfrak{g}$-module for any $\mathbf{a}$, these equations hold for any 
$\mathbf{a}$, that is they are actually formal identities. Write now 
$X_j+(a_j+b_j)\mathrm{Id}_V=A_j+B_j$, a sum of matrices, where
$A_j=X_j+a_j\mathrm{Id}_V$ and $B_j=b_j\mathrm{Id}_V$. Note that $A_j$ 
and $B_j$ commute. Now the defining relations for $\mathfrak{g}$ on 
$\mathrm{F}V$  reduce to our formal identities and hence are satisfied.
The formula for the action of $h_i$ is obtained by a direct computation. 
\end{proof}

Let $V$ and $V'$ be two finite dimensional
$\mathbb{C}[[x_1,x_2,\dots,x_n]]$-modules and let
$f:V\to V'$ be a homomorphism. Then $f$ extends diagonally to a
$\mathbb{C}$-linear map $\mathrm{F}f:\mathrm{F}V\to\mathrm{F}V'$. As $f$
commutes with all $X_i$, the map $\mathrm{F}f$ commutes with all
$e_{i,j}$ and hence defines a homomorphism of $\mathfrak{g}$-modules.
As a consequence, $\mathrm{F}$ becomes a functor from the category of
finite dimensional $\mathbb{C}[[x_1,x_2,\dots,x_n]]$-modules to the
category of $\mathfrak{g}$-modules. By construction, $\mathrm{F}$ is exact
and faithful. Furthermore, it sends the simple one-dimensional
$\mathbb{C}[[x_1,x_2,\dots,x_n]]$-module to $N(\mathbf{a})$,
which is an object of $\hat{\mathcal{C}}_{\lambda,\xi}$.
As $\hat{\mathcal{C}}_{\lambda,\xi}$ is extension closed in
$\mathfrak{g}\text{-}\mathrm{mod}$, we get that
the image of $\mathrm{F}$ belongs to $\hat{\mathcal{C}}_{\lambda,\xi}$. Hence we have a functor from the category of finite dimensional $\mathbb{C}[[x_1,x_2,\dots,x_n]]$-modules to $\hat{\mathcal{C}}_{\lambda,\xi}$.

Let $\mu_{\mathbf{b}}$
denote the weight of $\mathbf{v}_{\mathbf{b}}$.
Note that $\mathbf{b}\neq \mathbf{b}'$
implies $\mu_{\mathbf{b}}\neq \mu_{\mathbf{b}'}$.
It follows that $V_{\mathbf{b}}$ is the generalized
weight space of $\mathrm{F}V$ of weight $\mu_{\mathbf{b}}$.
If $\varphi:\mathrm{F}V\to \mathrm{F}V'$ is a
$\mathfrak{g}$-homomorphism, it must preserve the weight spaces and
hence it induces a linear map $f:V\to V'$ (on the component with
$\mathbf{b}=\mathbf{0}$). As $\varphi$ commutes with all $h_i$, the
map $f$ commutes with all operators $X_i-X_{i+1}$. As $\varphi$ commutes with
the element $C=(h_1+1)^2+4e_{21}e_{12}$, the map $f$ commutes with
the operator $((a_1+a_2+1)\mathrm{Id}_V+X_1+X_2)^2$ and thus
with all polynomials in this operator. If $a_1+a_2\not\in\mathbb{Z}$
(which we may assume by Proposition~\ref{prop3}),
then the latter operator is invertible and 
we get that $f$ commutes with the polynomial square root of it and hence
with $X_1+X_2$. This implies that $f$ commutes with all $X_i$.
Therefore $\varphi=\mathrm{F}f$ and thus the functor $\mathrm{F}$ is full.

To prove that $\mathrm{F}$ is an equivalence, we are left to show
that it is dense (i.e. essentially surjective). We first 
consider the case $n=3$.

\subsection{Commutativity of $D_{\lambda,\xi}$ and $\hat{D}_{\lambda,\xi}$ for $\mathfrak{sl}_3$}\label{s2.3}

In this subsection we assume that $\mathfrak{g}=\mathfrak{sl}_3$ and 
prove the following statements:

\begin{proposition}\label{prop9}
The algebras 
$D_{\lambda,\xi}$ and $\hat{D}_{\lambda,\xi}$ are commutative.
\end{proposition}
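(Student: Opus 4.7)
The plan is to use the functor $\mathrm{F}$ from Subsection~\ref{s2.2} to transfer commutativity from the commutative algebra $\mathbb{C}[[x_1,x_2,x_3]]$. By Corollary~\ref{cor2}, $\hat{\mathcal{C}}_{\lambda,\xi}$ has a unique simple object $L=N(\mathbf{a})$ up to isomorphism, so $\hat{D}_{\lambda,\xi}$ is a complete local algebra; since $D_{\lambda,\xi}$ is a quotient of $\hat{D}_{\lambda,\xi}$, it suffices to treat $\hat{D}_{\lambda,\xi}$. Using the inverse limit description from Subsection~\ref{s1.5}, I work at each Loewy-length truncation: it suffices to show that $\hat{D}^k_{\lambda,\xi}=\mathrm{End}_\mathfrak{g}(P^k)^{\mathrm{op}}$ is commutative for every $k$, where $P^k$ is the projective cover of $L$ in $\hat{\mathcal{C}}^k_{\lambda,\xi}$.

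The key observation is that for the cyclic $\mathbb{C}[[x_1,x_2,x_3]]$-module $V_k:=\mathbb{C}[[x_1,x_2,x_3]]/\mathfrak{m}^k$, fullness of $\mathrm{F}$ (Subsection~\ref{s2.2}) gives
\begin{displaymath}
\mathrm{End}_{\mathfrak{g}}(\mathrm{F} V_k)\;\cong\;\mathrm{End}_{\mathbb{C}[[x]]}(V_k)\;=\;V_k,
\end{displaymath}
which is commutative. Exactness and faithfulness of $\mathrm{F}$ then imply that $\mathrm{F} V_k$ is indecomposable, has simple top $L$ and Loewy length at most $k$, hence lies in $\hat{\mathcal{C}}^k_{\lambda,\xi}$. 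I would aim to identify $\mathrm{F} V_k$ with the projective cover $P^k$, which immediately gives $\hat{D}^k_{\lambda,\xi}\cong V_k$ and so settles commutativity.

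The main obstacle is this last identification: projectivity of $\mathrm{F} V_k$ in $\hat{\mathcal{C}}^k_{\lambda,\xi}$ is very close to density of $\mathrm{F}$, which is the content of Subsection~\ref{s2.4} and should not be invoked here. A workable substitute is to compute $\dim\mathrm{Ext}^1_{\hat{\mathcal{C}}_{\lambda,\xi}}(L,L)=3$ directly, using the explicit formulae of Subsection~\ref{s1.4}: any self-extension of $N(\mathbf{a})$ is encoded by an upper-triangular $2\times 2$ action of each generator $e_{i,j}$, and imposing the $\mathfrak{sl}_3$-relations pins down exactly three free parameters, corresponding to the infinitesimal deformations of $(a_1,a_2,a_3)$. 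Combined with the natural algebra homomorphism $V_k\to\hat{D}^k_{\lambda,\xi}$ arising from the $V_k$-action on $\mathrm{F} V_k$ composed with any surjection $P^k\twoheadrightarrow\mathrm{F} V_k$, a tangent-space comparison and a Nakayama-type argument force this homomorphism to be an isomorphism. Taking the inverse limit over $k$ yields commutativity of $\hat{D}_{\lambda,\xi}$, and hence of its quotient $D_{\lambda,\xi}$.
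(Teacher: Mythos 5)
Your reduction steps are fine: by Corollary~\ref{cor2} and Corollary~\ref{cor5} there is a unique, pointed simple $L=N(\mathbf{a})$, ${D}_{\lambda,\xi}$ is a quotient of $\hat{D}_{\lambda,\xi}$, fullness and faithfulness of $\mathrm{F}$ (Subsection~\ref{s2.2}) give $\operatorname{End}_{\mathfrak{g}}(\mathrm{F}V_k)\cong \operatorname{End}_{\mathbb{C}[[x_1,x_2,x_3]]}(V_k)\cong V_k$, and $\mathrm{F}V_k$ has simple top $L$ and Loewy length at most $k$, hence is a quotient of $P^k$. You also correctly see that identifying $\mathrm{F}V_k$ with $P^k$ is essentially the density of $\mathrm{F}$, which may not be used here (the density proof in Subsection~\ref{s2.4} invokes Corollary~\ref{cor10}, i.e.\ Proposition~\ref{prop9} itself). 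But the proposed substitute does not close the gap, for two reasons. First, the ``natural algebra homomorphism $V_k\to\hat{D}^k_{\lambda,\xi}$'' does not exist as described: an endomorphism of $\mathrm{F}V_k$ lifts along a surjection $P^k\twoheadrightarrow \mathrm{F}V_k$ only by projectivity of $P^k$, the lift is not unique, and no choice of lifts is multiplicative unless the kernel is stable under $\operatorname{End}(P^k)$ --- and even then the induced map goes the other way, $\operatorname{End}(P^k)\to\operatorname{End}(\mathrm{F}V_k)$. Second, even granting $\dim\operatorname{Ext}^1(L,L)=3$ (itself a nontrivial computation: infinitely many weight spaces give infinitely many unknowns and relations, modulo an infinite-dimensional space of coboundaries), this only controls $\operatorname{rad}/\operatorname{rad}^2$ of $\hat{D}^k_{\lambda,\xi}$ and says nothing about relations; no tangent-space-plus-Nakayama argument can yield commutativity from it. Concretely, the completed free algebra on three generators has a unique simple module with three-dimensional $\operatorname{Ext}^1$, and the category of finite dimensional $\mathbb{C}[[x_1,x_2,x_3]]$-modules embeds into its module category by an exact, fully faithful functor (through the abelianization); every fact you establish holds in that situation, yet the algebra is not commutative. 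What is genuinely needed, and what your construction fails to provide, is an algebra homomorphism from a commutative algebra into $\hat{D}_{\lambda,\xi}$ whose image covers $\operatorname{rad}/\operatorname{rad}^2$.

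This is exactly the input the paper manufactures, by a mechanism absent from your sketch: the commutative Gelfand--Zetlin subalgebra $\Gamma$ of $U$ (Subsection~\ref{s1.6}). Taking the $\mu$-weight space gives an exact functor $\mathrm{G}_{\chi}$ to $\Gamma_{\chi}$-modules; its left adjoint $\mathrm{F}_{\chi}$ sends simples to simples by a multiplicity-one computation with Gelfand--Zetlin tableaux that is special to $\mathfrak{sl}_3$ (Lemma~\ref{lem8-1}); the adjunction morphisms are surjective (Lemma~\ref{lem8}); hence $\mathrm{F}_{\chi}$ preserves projectives and is full on them (Corollaries~\ref{cor8-2} and \ref{cor8-3}), so $\hat{D}_{\lambda,\xi}$ is a quotient of the commutative algebra $\Gamma_{\chi}$. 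If you want to salvage your route, you must either prove projectivity of $\mathrm{F}V_k$ in $\hat{\mathcal{C}}^k_{\lambda,\xi}$ by an argument independent of density, or produce commuting endomorphisms of $P^k$ spanning the cotangent space by some other means; as it stands, the crucial step is missing.
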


\begin{corollary}\label{cor10}
Let $U_0$ denote the centralizer of $\mathfrak{h}$ in $U$, $x,y\in U_0$,
$M\in \hat{\mathcal{C}}_{\lambda,\xi}$ and $m\in M$. Then we have
$x(y(m))=y(x(m))$.
\end{corollary}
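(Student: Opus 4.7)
My plan is to deduce Corollary \ref{cor10} from Proposition \ref{prop9} by showing that the action of $U_0$ on each generalized weight space of $M$ factors through the commutative algebra $\hat D_{\lambda,\xi}$. Since any $x\in U_0$ commutes with $\mathfrak{h}$, it preserves the generalized weight decomposition $M=\bigoplus_\mu M^\mu$, so it suffices to prove $[x,y]$ annihilates each $M^\mu$. Every $M\in\hat{\mathcal{C}}_{\lambda,\xi}$ has finite length, hence lies in some $\hat{\mathcal{C}}^k_{\lambda,\xi}$; this subcategory is equivalent to $D^{(k)}$-mod for a finite-dimensional quotient $D^{(k)}$ of $\hat{D}_{\lambda,\xi}$ given by the opposite of the endomorphism algebra of a minimal projective generator $P=P^{(k)}$, and by Proposition \ref{prop9} the algebra $D^{(k)}$ is commutative. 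Thanks to the pointedness reduction (Corollary \ref{cor5}), the unique simple module $L\cong N(\mathbf{a})$ satisfies $\dim L^\mu=1$ for every $\mu\in\xi$.

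The technical heart of the proof is to show that $P^\mu$ is free of rank one as a right $D^{(k)}$-module. I would pick a weight vector $v\in P^\mu$ lifting a nonzero vector of $L^\mu$; by Nakayama, $v$ generates $P$ as a $\mathfrak{g}$-module. The evaluation map $D^{(k)}\to P^\mu$, $\phi\mapsto \phi(v)$, is a right $D^{(k)}$-module homomorphism whose kernel consists of $\mathfrak{g}$-endomorphisms vanishing on $v$, hence on $U(\mathfrak{g})\cdot v=P$, so it is injective. A dimension count via the equivalence $\hat{\mathcal{C}}^k_{\lambda,\xi}\cong D^{(k)}\text{-mod}$ (both sides have dimension equal to the composition length of $P$, which equals $\dim D^{(k)}$ since $D^{(k)}$ is commutative local with residue field $\mathbb{C}$) then upgrades this to an isomorphism. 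In particular $\mathrm{End}_{D^{(k)}}(P^\mu)\cong D^{(k)}$.

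For $x\in U_0$, the operator of multiplication by $x$ on $P^\mu$ commutes with the right $D^{(k)}$-action, since elements of $D^{(k)}$ are $\mathfrak{g}$-endomorphisms and therefore commute with every element of $U$. Thus the $U_0$-action on $P^\mu$ factors through the commutative algebra $D^{(k)}$, and $[x,y]$ kills $P^\mu$. For arbitrary $M\in\hat{\mathcal{C}}^k_{\lambda,\xi}$, any surjection $P^{\oplus r}\twoheadrightarrow M$ restricts to a $U_0$-equivariant surjection on $\mu$-weight spaces and transports the vanishing to $M^\mu$; summing over $\mu$ then completes the proof. The main obstacle is the freeness step: it relies crucially on the pointedness reduction to match dimensions and to produce a cyclic generator of $P^\mu$ over $D^{(k)}$, so without this input the argument would require significant modification.
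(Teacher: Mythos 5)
Your argument is correct, but it runs along a different track than the paper's own proof. The paper also reduces to the action of $U_0$ on a single ($\mu$-)weight space, but then it invokes the PBW theorem to see that $U$ is free over $U_0$, so that induction and restriction give an equivalence between $\hat{\mathcal{C}}_{\lambda,\xi}$ and finite dimensional modules over a completion of $U_0/I$, where $I=\bigcap_N\mathrm{Ann}_{U_0}N_{\mu}$; commutativity of that completion is then read off from Proposition~\ref{prop9}. You instead stay inside the block: you truncate to $\hat{\mathcal{C}}^k_{\lambda,\xi}\cong D^{(k)}\text{-mod}$, prove that the generalized weight space $P^{\mu}$ of the projective generator is free of rank one over $D^{(k)}$ (Nakayama plus a dimension count which uses pointedness, i.e. Corollary~\ref{cor5}, exactly where the paper's proof uses it to get the character $\pi:U_0\to\mathbb{C}$), and then observe that $U_0$ maps into the commutant $\operatorname{End}_{D^{(k)}}(P^{\mu})\cong D^{(k)}$, which is commutative by Proposition~\ref{prop9}; finally you push the vanishing of $[x,y]$ from $P$ to an arbitrary $M$ via a projective presentation. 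Your route avoids the induction--restriction equivalence with $U_0$-modules (which the paper asserts rather than proves) at the cost of the explicit freeness lemma, and it makes visible exactly which structural inputs (finite length, one pointed simple, Schur's lemma, exactness of taking generalized weight spaces) are needed; the paper's route is shorter and identifies the block algebra directly as a quotient of a completion of $U_0$. One small correction: there need not exist a genuine weight vector in $P^{\mu}$ mapping to a nonzero vector of $L^{\mu}$ (the weight space $P_{\mu}$ can die in the quotient), so you should take $v$ to be any element of the generalized weight space $P^{\mu}$ with nonzero image in $L$; the Nakayama argument and everything after it go through verbatim with this choice. Also, to quote Proposition~\ref{prop9} for $D^{(k)}$ you should note that $D^{(k)}$ is a quotient of $\hat{D}_{\lambda,\xi}$ (the limit maps are surjective), or alternatively cite Corollary~\ref{cor8-3}, which exhibits each such endomorphism algebra as a quotient of the commutative $\Gamma_{\chi}$.
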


\begin{corollary}\label{cor11}
Let $L$ be a simple object in $\hat{\mathcal{C}}_{\lambda,\xi}$
then $\dim \mathrm{Ext}^1_{\mathfrak{g}}(L,L)<\infty$.
\end{corollary}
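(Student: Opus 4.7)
The plan is to combine commutativity (Proposition~\ref{prop9}) with a direct cocycle analysis. By Corollary~\ref{cor5} I may assume that $L=N(\mathbf{a})$ is pointed. Since Corollary~\ref{cor2} shows that every object of $\hat{\mathcal{C}}_{\lambda,\xi}$ has all composition factors isomorphic to $L$, the complete basic algebra $\hat{D}_{\lambda,\xi}$ is local with residue field $\mathbb{C}=\operatorname{End}(L)$, maximal ideal $\mathfrak{m}$, and $L=\hat{D}_{\lambda,\xi}/\mathfrak{m}$; by Proposition~\ref{prop9} it is also commutative. The standard identification
\[
\operatorname{Ext}^{1}_{\mathfrak{g}}(L,L)\;\cong\;\operatorname{Ext}^{1}_{\hat{D}_{\lambda,\xi}}(L,L)\;\cong\;\operatorname{Hom}_{\mathbb{C}}\bigl(\mathfrak{m}/\mathfrak{m}^{2},\mathbb{C}\bigr)
\]
(valid for any commutative local $\mathbb{C}$-algebra with residue field $\mathbb{C}$) then reduces the claim to showing that $\mathfrak{m}/\mathfrak{m}^{2}$, equivalently the quotient algebra $\hat{D}_{\lambda,\xi}/\mathfrak{m}^{2}$, is finite-dimensional.

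To establish this I would parametrise self-extensions $0\to L\to M\to L\to 0$ explicitly. Because $L$ is pointed, each generalized weight space $M^{\mu_{\mathbf{b}}}$ is two-dimensional; pick a lift $\tilde v_{\mathbf{b}}$ of each basis vector $v_{\mathbf{b}}$ of $L$. Then the action of $\mathfrak{g}$ on $M$ is of the form
\[
e_{i,j}\tilde v_{\mathbf{b}}=(a_{j}+b_{j})\,\tilde v_{\mathbf{b}+\varepsilon_{i}-\varepsilon_{j}}+c^{i,j}_{\mathbf{b}}\,v_{\mathbf{b}+\varepsilon_{i}-\varepsilon_{j}},\qquad h_{i}\tilde v_{\mathbf{b}}=\mu_{\mathbf{b}}(h_{i})\tilde v_{\mathbf{b}}+\eta^{i}_{\mathbf{b}}\,v_{\mathbf{b}}
\]
for scalars $c^{i,j}_{\mathbf{b}},\eta^{i}_{\mathbf{b}}\in\mathbb{C}$. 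The defining relations of $\mathfrak{sl}_{3}$ impose an (infinite) system of linear cocycle conditions on these scalars, while the reparametrisations $\tilde v_{\mathbf{b}}\mapsto\tilde v_{\mathbf{b}}+d_{\mathbf{b}}v_{\mathbf{b}}$ produce the coboundaries.

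The crucial step is to feed in Corollary~\ref{cor10}: the action of $U_{0}$ on $M$ is commutative. Testing this against natural generators of $U_{0}$, such as the products $e_{i,j}e_{j,i}$, $e_{i,j}e_{j,k}e_{k,i}$ and the elements of $\mathfrak{h}$, yields strong identities between the cocycle data for neighbouring $\mathbf{b}$'s. These identities, combined with the fact that the scalars $a_{j}+b_{j}$ are non-zero for every $\mathbf{b}$ (which by Proposition~\ref{prop3} may be arranged using the non-integrality assumption in case~\eqref{case1} or case~\eqref{case2}), force the families $\eta^{i}_{\mathbf{b}}$ and $c^{i,j}_{\mathbf{b}}$ to be, modulo coboundaries, essentially independent of $\mathbf{b}$; after this reduction only finitely many independent parameters survive.

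The main obstacle is the cocycle bookkeeping: several infinite families of scalars have to be reduced modulo an infinite system of linear cocycle constraints and coboundary relations. The role of commutativity in Proposition~\ref{prop9}/Corollary~\ref{cor10} is precisely the finiteness mechanism that collapses these infinite families; without it there is no reason for the $\mathbf{b}$-dependence to be removable. The remaining task is a fairly routine linear-algebraic manipulation with shifts in $\mathbf{b}$, whose precise count (matching the value $n=3$ given by Corollary~\ref{cor16}) will later be read off from Theorem~\ref{thm6}.
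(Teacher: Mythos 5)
Your reduction to the local commutative algebra $\hat{D}_{\lambda,\xi}$ is fine as far as it goes, but it does not yet prove anything: commutativity (Proposition~\ref{prop9}, Corollary~\ref{cor10}) alone cannot force $\mathfrak{m}/\mathfrak{m}^2$ to be finite dimensional --- a complete commutative local $\mathbb{C}$-algebra may well have infinite-dimensional $\mathfrak{m}/\mathfrak{m}^2$. So the entire content of the corollary sits in your ``crucial step'', and that step is only asserted. You claim that the identities obtained from Corollary~\ref{cor10} force the infinite families $c^{i,j}_{\mathbf{b}}$, $\eta^{i}_{\mathbf{b}}$ to become $\mathbf{b}$-independent modulo coboundaries, but you give no mechanism: the elements of $U_0$ preserve each generalized weight space, so the relations they supply constrain the cocycle data essentially one weight space at a time, and turning them into the claimed rigidity in $\mathbf{b}$, together with an actual bound on the number of surviving parameters, is precisely the nontrivial work. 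It is in effect a self-extension analogue of the analysis in Proposition~\ref{prop1001} and Lemmata~\ref{lem1002}, \ref{lem1003}, which the paper only carries out later, under the extra hypothesis \eqref{assumption}, and which is not ``routine bookkeeping''. Deferring the precise count to Theorem~\ref{thm6} and Corollary~\ref{cor16} also undercuts the role of this corollary, which belongs to the $\mathfrak{sl}_3$ preparation preceding those results.

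The paper's proof uses a different, and much shorter, finiteness mechanism that your argument never touches: by Corollary~\ref{cor8-3} the functor $\mathrm{F}_{\chi}$ is full on projectives, so $\hat{D}_{\lambda,\xi}$ is a quotient of the completed Gelfand--Zetlin algebra $\Gamma_{\chi}$, which for $\mathfrak{sl}_3$ is a power series algebra in five variables (Subsection~\ref{s1.6}). Consequently self-extensions of $L$ are modules over the finite-dimensional algebra $\Gamma_{\chi}/(\mathrm{ker}(\chi))^2$, and $\dim\mathrm{Ext}^1_{\mathfrak{g}}(L,L)$ is bounded by its dimension. In other words, the decisive input is not commutativity but the fact that $\hat{D}_{\lambda,\xi}$ is topologically generated by the finitely many Gelfand--Zetlin generators; commutativity of $\hat{D}_{\lambda,\xi}$ is itself a by-product of that same surjection. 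To repair your argument, use the surjection $\Gamma_{\chi}\tto\hat{D}_{\lambda,\xi}$ (equivalently, bound your cocycle space by the images of the five Gelfand--Zetlin generators) instead of relying on commutativity alone.
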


Consider the subalgebra  $\Gamma$ of $U$ as defined in Subsection~\ref{s1.6}. If $M$ is a weight $\mathfrak{g}$-module, then every 
$M^{\mu}$ becomes a finite dimensional $\Gamma$-module. 
For the category $\Gamma\text{-}\mathrm{mod}$ of finite dimensional
$\Gamma$-modules we have the usual decomposition
\begin{displaymath}
\Gamma\text{-}\mathrm{mod}\cong
\bigoplus_{\chi:\Gamma\to \mathbb{C}}
\Gamma_{\chi}\text{-}\mathrm{mod},
\end{displaymath}
where $\Gamma_{\chi}\text{-}\mathrm{mod}$ denotes the full subcategory of
$\Gamma\text{-}\mathrm{mod}$, which consists of all $M$ annihilated by 
some power of $\mathrm{Ker}(\chi)$. 
The  category $\Gamma_{\chi}\text{-}\mathrm{mod}$
is equivalent to the category  of finite dimensional modules over
the completion $\Gamma_{\chi}$ of $\Gamma$ with respect to the
kernel $\mathrm{ker}(\chi)$ of $\chi$.

As $U$ is free as a right $\Gamma$-module, the induction functor
\begin{displaymath}
\mathrm{Ind}_{\Gamma}^{U}:=
U\otimes_{\Gamma}{}_-:
\Gamma\text{-}\mathrm{mod}\to
U\text{-}\mathrm{mod},
\end{displaymath}
from the category $\Gamma\text{-}\mathrm{mod}$ to the category 
$U\text{-}\mathrm{mod}$ of finitely generated $U$-modules
is exact and sends nonzero modules to nonzero modules. From \cite{Ov}
it even follows that any module in the image of
$\mathrm{Ind}_{\Gamma}^{U}$ has  finite length. 

Fix some $\mu\in\xi$. If $L$ is a simple object in
$\hat{\mathcal{C}}_{\lambda,\xi}$, then $L_{\mu}$ is one-dimensional
(as we assume that $L$ is pointed). Since $\Gamma L_{\mu}\subset L_{\mu}$,
it follows that $L_{\mu}$ is a simple $\Gamma$-module, which corresponds
to some character $\chi=\chi_{\mu}:\Gamma\to\mathbb{C}$. All other nonzero
weight spaces of $L$ are also simple $\Gamma$-modules, but not isomorphic
to $L_{\mu}$ (as $\Gamma$ contains $\mathfrak{h}$). As any object in
$\hat{\mathcal{C}}_{\lambda,\xi}$ has a filtration with subquotients isomorphic
to $L$, it follows that the assignment
\begin{displaymath}
\hat{\mathcal{C}}_{\lambda,\xi}\ni M\mapsto M_{\mu}\in
\Gamma_{\chi}\text{-}\mathrm{mod}
\end{displaymath}
defines a functor $\mathrm{G}_{\chi}$ from $\hat{\mathcal{C}}_{\lambda,\xi}$
to $\Gamma_{\chi}\text{-}\mathrm{mod}$. The functor $\mathrm{G}_{\chi}$
is by definition exact and sends simple modules to simple modules. From the
classical adjunction between induction and restriction, the left
adjoint of $\mathrm{G}_{\chi}$ is the functor $\mathrm{F}_{\chi}$, which
is the composition of $\mathrm{Ind}_{\Gamma}^{U}$ followed
by taking the maximal quotient, which belongs to $\hat{\mathcal{C}}_{\lambda,\xi}$
(the latter is well-defined as the induced module has finite length
and $\hat{\mathcal{C}}_{\lambda,\xi}$ is extension closed in
$U\text{-}\mathrm{mod}$).

\begin{lemma}\label{lem8-1}
The functor $\mathrm{F}_{\chi}$ sends simple modules to simple modules.
\end{lemma}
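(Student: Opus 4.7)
The unique simple finite-dimensional $\Gamma_\chi$-module is the one-dimensional residue field $S=\mathbb{C}_\chi$, and by Corollary~\ref{cor2} combined with Corollary~\ref{cor5} the category $\hat{\mathcal{C}}_{\lambda,\xi}$ has a unique simple object $L$, which we may assume to be pointed so that $L_\mu\cong S$. The plan is to identify $\mathrm{F}_\chi(S)$ with $L$. First, adjunction gives
\[
\mathrm{Hom}_{\mathfrak{g}}(\mathrm{F}_\chi(S),L)=\mathrm{Hom}_{\Gamma_\chi}(S,\mathrm{G}_\chi(L))=\mathrm{Hom}_{\Gamma_\chi}(S,S)=\mathbb{C},
\]
so the image of $\mathrm{id}_S$ furnishes a nonzero, and thus surjective, counit map $\epsilon\colon\mathrm{F}_\chi(S)\twoheadrightarrow L$.

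To show that $\epsilon$ is an isomorphism I would record two observations, both consequences of exactness of $\mathrm{G}_\chi=({-})_\mu$ and uniqueness of the simple object. First, every nonzero $M\in\hat{\mathcal{C}}_{\lambda,\xi}$ has $M_\mu\neq 0$: its composition factors are copies of $L$ and $\mathrm{G}_\chi(L)=S\neq 0$. Applied to $M/(U\cdot M_\mu)$, this forces $M$ to be generated by $M_\mu$ as a $U$-module. Second, if $\dim M_\mu=1$ then $M$ is simple, since any short exact sequence $0\to K\to M\to Q\to 0$ in $\hat{\mathcal{C}}_{\lambda,\xi}$ gives, upon applying $\mathrm{G}_\chi$, the identity $\dim K_\mu+\dim Q_\mu=1$, so one of $K$, $Q$ has vanishing $\mu$-weight space and must be zero. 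Consequently it suffices to prove $\dim(\mathrm{F}_\chi(S))_\mu=1$.

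Writing $U_0$ for the centralizer of $\mathfrak{h}$ in $U$ and $\bar v$ for the image of $1\otimes 1\in U\otimes_\Gamma S$, one has $\mathrm{F}_\chi(S)=U\bar v$, and the weight-space decomposition of the induced module yields $(\mathrm{F}_\chi(S))_\mu=U_0\bar v$. The surjection $\epsilon$ sends $\bar v$ to a nonzero $v_L\in L_\mu$, and since $L_\mu$ is one-dimensional the action of $U_0$ on $v_L$ is by a character $\beta\colon U_0\to\mathbb{C}$ extending $\chi|_\Gamma$. The crux of the argument is to show that already on $\bar v$ the algebra $U_0$ acts via $\beta$, that is, $(X-\beta(X))\bar v=0$ for every $X\in U_0$. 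For generators of $U_0$ that lie in $\Gamma$ this is immediate from $\ker\chi\cdot\bar v=0$; in the $\mathfrak{sl}_3$-case this covers $\mathfrak{h}$, the central element $C_3$, and the element $e_{1,2}e_{2,1}$ extracted as a quadratic polynomial from the $\mathfrak{sl}_2$-Casimir $C_2$. The main obstacle is to handle the remaining generators $e_{2,3}e_{3,2}$, $e_{1,3}e_{3,1}$ and the cubic PBW monomials $e_{1,2}e_{2,3}e_{3,1}$ and $e_{1,3}e_{3,2}e_{2,1}$, which are not in $\Gamma$.

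I would address this obstacle by exploiting the cuspidality of $\mathrm{F}_\chi(S)$: every root vector acts bijectively, hence each $e_\alpha e_{-\alpha}$ is invertible on every weight space, and combined with the commutation relations $[e_\alpha,e_{-\alpha}]=H_\alpha\in\mathfrak{h}$ together with $\ker\chi\cdot\bar v=0$ a short PBW rearrangement forces the element $(e_\alpha e_{-\alpha}-\beta(e_\alpha e_{-\alpha}))\bar v$ to lie in the kernel of an appropriate root vector on a suitable weight space, and hence to vanish. The two cubic monomials reduce analogously after normal-ordering. An alternative, more abstract route is to combine Ovsienko's length bound \cite{Ov} on the induced Gelfand-Zetlin module $\mathrm{Ind}_\Gamma^U(S)$ with the cuspidality requirement on its maximal quotient in $\hat{\mathcal{C}}_{\lambda,\xi}$ to obtain directly the dimension bound $\dim(\mathrm{F}_\chi(S))_\mu\leq 1$. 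Either way, once $U_0\bar v=\mathbb{C}\bar v$ is established, the simplicity of $\mathrm{F}_\chi(S)$ follows from the reduction in the preceding paragraph, and $\mathrm{F}_\chi(S)\cong L$.
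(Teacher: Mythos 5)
Your structural reductions are correct and in fact run parallel to the paper's proof: adjunction gives $\mathrm{Hom}_{\mathfrak{g}}(\mathrm{F}_\chi S,L)\cong\mathbb{C}$, hence a surjection onto $L$, and since every object of $\hat{\mathcal{C}}_{\lambda,\xi}$ has all composition factors isomorphic to the pointed module $L$, simplicity of $\mathrm{F}_\chi S$ is equivalent to the bound $\dim(\mathrm{F}_\chi S)_\mu\le 1$, i.e.\ to the multiplicity of $L$ in $\mathrm{Ind}_\Gamma^U S$ being at most one. The gap is that this bound, which is the only step carrying real content, is not actually proved. Your main route fails as described: commuting $e_{\pm\alpha}$ past $X=e_\alpha e_{-\alpha}$ only yields identities of the shape $e_{\pm\alpha}X=(X+\text{Cartan term})e_{\pm\alpha}$, so $e_{\pm\alpha}\bigl(X-\beta(X)\bigr)\bar v$ becomes the same unknown operator applied to the translated vector $e_{\pm\alpha}\bar v$, which lies in a weight space about which nothing is known at this stage (before simplicity is established there is no reason for $U_0$, or even $\Gamma$, to act by a character there). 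So the claimed ``kernel membership'' is not forced by cuspidality, the relation $[e_\alpha,e_{-\alpha}]=H_\alpha$ and $\ker\chi\cdot\bar v=0$; note also that your computation uses no genericity of the block and nothing specific to $n=3$, whereas the multiplicity-one statement is exactly the point the authors say they can only verify for $\mathfrak{sl}_3$, by a genuinely combinatorial argument.

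The mechanism the paper uses, and which your ``alternative route'' gestures at but does not supply, is Ovsienko's description (\cite{Ov}, Section~4) of the simple $\Gamma$-subquotients of $\mathrm{Ind}_\Gamma^U\mathbb{C}_\chi$: they are indexed, with multiplicities, by the tableaux obtained from the tableau $\tau$ of $\chi$ by adding arbitrary integers to the entries of the two bottom rows. The non-integrality condition $x-b\notin\mathbb{Z}$, available for these blocks of $\mathfrak{sl}_3$, forces $\tau$ itself to occur exactly once (only the zero shift returns $\tau$), whence the multiplicity of the $\Gamma$-character $\chi$, and a fortiori of $L$, in the induced module is one; combined with the adjunction count this gives $\mathrm{F}_\chi S\cong L$. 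A bare ``length bound'' on the induced module together with cuspidality of its maximal quotient does not yield $\dim(\mathrm{F}_\chi S)_\mu\le 1$; one needs this character-support description plus the non-integrality of $x-b$, and this is precisely where the argument (and the paper's whole commutativity section) is confined to $n=3$. To repair your write-up, replace the PBW sketch by this tableau-shift computation, or supply an equivalent control of the zero-weight space of $\mathrm{Ind}_\Gamma^U\mathbb{C}_\chi$ as a $U_0$- or $\Gamma$-module.
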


\begin{proof}
Let $L\in \hat{\mathcal{C}}_{\lambda,\xi}$ be simple. 
Then $L_{\mu}$ is a simple $\Gamma$-module and we have
\begin{equation}\label{eq1005}
\mathrm{Hom}_{\mathfrak{g}}(\mathrm{Ind}_{\Gamma}^{U}L_{\mu},L)\cong
\mathrm{Hom}_{\Gamma}(L_{\mu},L)\cong\mathbb{C}.
\end{equation}
Moreover, we claim that the multiplicity of $L$ in
$\mathrm{Ind}_{\Gamma}^{U}L_{\mu}$ is one, which then implies $\mathrm{F}_{\chi}L_{\mu}\cong L$ and completes the proof.
Because of \eqref{eq1005} we only have to show that this 
multiplicity is at most one. The verification of this claim is a 
technical computation using the Gelfand-Zetlin combinatorics for
$\mathfrak{sl}_3$. Our argument does not generalize to
$\mathfrak{sl}_n$, $n>3$.

As explained in Subsection~\ref{s1.6}, the character 
$\chi=\chi_{\mu}$ is represented by
a tableau of the following form:
\begin{equation}\label{eq2}
\begin{picture}(80.00,80.00)
\drawline(10.00,70.00)(70.00,70.00)
\drawline(10.00,50.00)(70.00,50.00)
\drawline(20.00,30.00)(60.00,30.00)
\drawline(30.00,10.00)(50.00,10.00)
\drawline(10.00,70.00)(10.00,50.00)
\drawline(30.00,70.00)(30.00,50.00)
\drawline(50.00,70.00)(50.00,50.00)
\drawline(70.00,70.00)(70.00,50.00)
\drawline(20.00,50.00)(20.00,30.00)
\drawline(40.00,50.00)(40.00,30.00)
\drawline(60.00,50.00)(60.00,30.00)
\drawline(30.00,30.00)(30.00,10.00)
\drawline(50.00,30.00)(50.00,10.00)
\put(20.00,60.00){\makebox(0,0)[cc]{{\tiny $a$}}}
\put(40.00,60.00){\makebox(0,0)[cc]{{\tiny $b$}}}
\put(60.00,60.00){\makebox(0,0)[cc]{{\tiny $b-1$}}}
\put(30.00,40.00){\makebox(0,0)[cc]{{\tiny $x$}}}
\put(50.00,40.00){\makebox(0,0)[cc]{{\tiny $b$}}}
\put(40.00,20.00){\makebox(0,0)[cc]{{\tiny $z$}}}
\end{picture}
\end{equation}
where $a,b,x,y\in\mathbb{C}$ and $z-x,z-b,x-a,x-b\not\in\mathbb{Z}$. 
For any $\nu\in\mathrm{supp}(L)$ the corresponding
$\chi_{\nu}$ is represented by a tableau $\tau'$ with the same
$a,b$ and some $x',z'$ such that 
$x-x'\in\mathbb{Z}$ and $z-z'\in\mathbb{Z}$.

Simple $\Gamma$-subquotients appearing in the module
$\mathrm{Ind}_{\Gamma}^{U}L_{\mu}$ (considered as $\Gamma$-module)
are indexed (with the corresponding multiplicities) by tableaux 
obtained from $\tau$ by adding arbitrary integers in the two bottom 
rows, that is to $x$, $b$ and $z$
(see \cite[Section~4]{Ov}). As $x-b\not\in\mathbb{Z}$, it follows that
$\tau$ itself appears in this set only once (namely when we add zeros).
This implies that the multiplicity of $L$ in
$\mathrm{Ind}_{\Gamma}^{U}L_{\mu}$ is at most one (note that the latter
is not true for $n>3$).
\end{proof}

\begin{lemma}\label{lem8}
The adjunction morphisms
\begin{displaymath}
\mathrm{adj}:\mathrm{F}_{\chi}\mathrm{G}_{\chi}\to
\mathrm{ID}_{\hat{\mathcal{C}}_{\lambda,\xi}}\quad
\text{ and }\quad
\mathrm{adj}':\mathrm{ID}_{\Gamma_{\chi}\text{-}\mathrm{mod}}
\to\mathrm{G}_{\chi}\mathrm{F}_{\chi}
\end{displaymath}
are surjective.
\end{lemma}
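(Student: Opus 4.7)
The plan is to treat the two adjunction morphisms separately, exploiting (i) that $\hat{\mathcal{C}}_{\lambda,\xi}$ has a unique (up to isomorphism) simple object $L$ (Corollary~\ref{cor2}), which is pointed so that $\dim L_\mu=1$, and (ii) that $\mathrm{F}_\chi$ sends any simple $\Gamma_\chi$-module to $L$ (Lemma~\ref{lem8-1}). Both facts are direct consequences of the results already in place, so the argument can be purely structural and avoid any further Gelfand-Zetlin bookkeeping.

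For the first morphism $\mathrm{adj}:\mathrm{F}_\chi\mathrm{G}_\chi M\to M$, I would unravel the definition: $\mathrm{adj}_M$ is the composite of the canonical surjection $\mathrm{Ind}_\Gamma^U M_\mu\twoheadrightarrow \mathrm{F}_\chi M_\mu$ with the multiplication map $u\otimes m\mapsto u\cdot m$, so its image is precisely the $U$-submodule $N=U\cdot M_\mu\subseteq M$. Suppose for contradiction that $N\neq M$. Then $M/N$ is a nonzero object of $\hat{\mathcal{C}}_{\lambda,\xi}$ (the category is abelian, hence closed under quotients), and being nonzero it must admit $L$ as a composition factor. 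Since $L_\mu\neq 0$ this forces $(M/N)_\mu\neq 0$, contradicting $(M/N)_\mu=M_\mu/N_\mu=0$.

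For the second morphism $\mathrm{adj}':V\to\mathrm{G}_\chi\mathrm{F}_\chi V$, I would proceed by induction on the $\Gamma_\chi$-length of $V$. Since $\Gamma_\chi$ is local with residue field $\mathbb{C}$, its unique simple module $V_0$ is one-dimensional. For the base case, Lemma~\ref{lem8-1} gives $\mathrm{F}_\chi V_0\cong L$ and $\mathrm{G}_\chi L=L_\mu$ is one-dimensional, so $\mathrm{adj}'_{V_0}$ is a linear map between one-dimensional spaces; under the adjunction bijection $\mathrm{Hom}_\mathfrak{g}(\mathrm{F}_\chi V_0,\mathrm{F}_\chi V_0)\cong \mathrm{Hom}_\Gamma(V_0,\mathrm{G}_\chi\mathrm{F}_\chi V_0)$ it corresponds to $\mathrm{id}_{\mathrm{F}_\chi V_0}\neq 0$, hence is an isomorphism. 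For the inductive step, pick a short exact sequence $0\to V'\to V\to V''\to 0$ with $V'$ simple. Since $\mathrm{F}_\chi$ is a left adjoint it preserves cokernels (is right exact) and $\mathrm{G}_\chi$ is exact, so $\mathrm{G}_\chi\mathrm{F}_\chi V'\to\mathrm{G}_\chi\mathrm{F}_\chi V\to\mathrm{G}_\chi\mathrm{F}_\chi V''\to 0$ is exact; combined with naturality of $\mathrm{adj}'$, the base case (an isomorphism on the left) and the inductive hypothesis (a surjection on the right), the five lemma delivers surjectivity of $\mathrm{adj}'_V$.

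The most delicate point is the right exactness of $\mathrm{F}_\chi$: while $\mathrm{Ind}_\Gamma^U$ is exact because $U$ is free over $\Gamma$, the subsequent step of passing to the maximal quotient inside $\hat{\mathcal{C}}_{\lambda,\xi}$ is only automatically right exact, and this right exactness is ensured by $\mathrm{F}_\chi$ being a left adjoint, hence preserving all colimits. Apart from this formal observation, no further ingredient beyond Lemma~\ref{lem8-1}, Corollary~\ref{cor2}, and the pointedness of $L$ is needed.
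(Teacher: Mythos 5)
Your proposal is correct, and half of it follows a genuinely different route from the paper. The paper treats both morphisms uniformly: for a simple object the adjunction morphism is nonzero (it is the image of a nonzero identity under the adjunction bijection) and its target is simple ($L$ itself for $\mathrm{adj}$, respectively $\mathrm{G}_{\chi}\mathrm{F}_{\chi}$ of the simple $\Gamma_{\chi}$-module, which is simple by Lemma~\ref{lem8-1}), hence it is surjective; the general case then follows by induction on the length and the Five Lemma. Your argument for $\mathrm{adj}'$ is exactly this, with the welcome extra observation that the right exactness of $\mathrm{F}_{\chi}$ needed for the bottom row of the diagram comes for free from its being a left adjoint --- a point the paper leaves implicit. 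For $\mathrm{adj}$, however, you avoid induction altogether: you identify the image of $\mathrm{adj}_M$ with $U\cdot M_{\mu}$ by factoring the $\mathrm{Ind}$--$\mathrm{Res}$ counit (multiplication) through the maximal quotient lying in $\hat{\mathcal{C}}_{\lambda,\xi}$, and then observe that a nonzero quotient of $M$ inside the block must have $\mu$ in its support, since all its composition factors are $L$ and $\mu\in\xi=\mathrm{supp}(L)$, which is impossible for $M/U\cdot M_{\mu}$. This is a clean direct argument (it does not even use pointedness of $L$, only $\mu\in\mathrm{supp}(L)$), whereas the paper's uniform treatment buys independence from the explicit description of the counit, using only that $\mathrm{F}_{\chi}$ and $\mathrm{G}_{\chi}$ send simples to simples. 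Two small points to tighten: in the image computation, $M_{\mu}$ must be read as the generalized weight space $\mathrm{G}_{\chi}M$, so that $(M/U\cdot M_{\mu})^{\mu}=0$ holds on the nose; and your inductive step really uses the epimorphism half of the Four Lemma (no injectivity on the left is needed), which is what the paper's reference to the Five Lemma amounts to as well.
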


\begin{proof}
If $L$ is a simple object of $\hat{\mathcal{C}}_{\lambda,\xi}$,
then $\mathrm{G}_{\chi}L$ is simple, in particular,
nonzero. Therefore, $\mathrm{adj}_L$ is nonzero (as the image
of the nonzero identity morphism on $\mathrm{G}_{\chi}L$ under the
adjunction isomorphism), hence surjective. The claim about $\mathrm{adj}$
follows now by induction
on the length of a module and the Five-Lemma. The second statement follows completely analogously using Lemma~\ref{lem8-1}.
\end{proof}

For every $k\in\mathbb{N}$ denote by $\Gamma_{\chi}\text{-}\mathrm{mod}^k$
and $\hat{\mathcal{C}}_{\lambda,\xi}^k$ the full subcategories of
$\Gamma_{\chi}\text{-}\mathrm{mod}$
and $\hat{\mathcal{C}}_{\lambda,\xi}$, respectively, which consist of all
modules of Loewy length at most $k$. Then
$\Gamma_{\chi}\text{-}\mathrm{mod}^k$ is equivalent to the category of
finite dimensional modules over the finite dimensional algebra
$\Gamma_{\chi}/(\mathrm{ker}(\chi))^k$. The category
$\hat{\mathcal{C}}_{\lambda,\xi}^k$ is an abelian length category.

\begin{corollary}\label{cor8-2}
\begin{enumerate}[(i)]
\item \label{cor8-2.1} The pair $(\mathrm{F}_{\chi},\mathrm{G}_{\chi})$
restricts to an adjoint pair of functors between
$\Gamma_{\chi}\text{-}\mathrm{mod}^k$ and $\hat{\mathcal{C}}_{\lambda,\xi}^k$.
\item \label{cor8-2.2}
The functor $\mathrm{F}_{\chi}$ sends projective modules
from $\Gamma_{\chi}\text{-}\mathrm{mod}^k$ to projective modules
in $\hat{\mathcal{C}}_{\lambda,\xi}^k$.
\end{enumerate}
\end{corollary}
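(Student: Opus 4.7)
The plan is to reduce Corollary~\ref{cor8-2} to two observations: first, that both $\mathrm{F}_\chi$ and $\mathrm{G}_\chi$ respect Loewy length, so that they restrict to functors between the subcategories of length $\leq k$; second, that the restricted $\mathrm{G}_\chi$ remains exact, so by general nonsense its left adjoint $\mathrm{F}_\chi$ must preserve projectives. The entire argument is built on the structural facts already assembled in Lemmas~\ref{lem8-1} and~\ref{lem8}, plus the observation that the adjunction on the ambient categories automatically descends to full subcategories.

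For part~\eqref{cor8-2.1}, I would first argue that $\mathrm{G}_\chi$ preserves Loewy length: it is exact (being projection onto the generalized weight space $M_\mu$) and sends simples to simples, as noted before Lemma~\ref{lem8-1}. Applying it to the radical series of $M$ therefore yields a filtration of $\mathrm{G}_\chi M$ of the same length with semisimple subquotients. The analogous step for $\mathrm{F}_\chi$ is the only point that takes real care, since $\mathrm{F}_\chi$ is merely right exact. My approach is: given $V$ with $\mathrm{rad}^k V = 0$, define
\begin{equation*}
F_i := \mathrm{Im}\bigl(\mathrm{F}_\chi(\mathrm{rad}^i V) \to \mathrm{F}_\chi V\bigr).
\end{equation*}
Applying the right exact $\mathrm{F}_\chi$ to the short exact sequence $0 \to \mathrm{rad}^{i+1} V \to \mathrm{rad}^i V \to \mathrm{rad}^i V/\mathrm{rad}^{i+1} V \to 0$ then identifies $F_i/F_{i+1}$ as a quotient of $\mathrm{F}_\chi(\mathrm{rad}^i V/\mathrm{rad}^{i+1} V)$, which is semisimple by Lemma~\ref{lem8-1} together with additivity of $\mathrm{F}_\chi$. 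The filtration $0 = F_k \subseteq \cdots \subseteq F_0 = \mathrm{F}_\chi V$ then witnesses $\mathrm{F}_\chi V \in \hat{\mathcal{C}}_{\lambda,\xi}^k$. Since both subcategories are full in their ambient categories, the Hom-adjunction automatically restricts.

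For part~\eqref{cor8-2.2}, the standard left-adjoint-of-an-exact-functor argument applies: the restricted $\mathrm{G}_\chi$ is exact (it is exact on the ambient category, and both subcategories are abelian, being closed under sub- and quotient objects in their ambient abelian categories), so for $P$ projective in $\Gamma_\chi\text{-}\mathrm{mod}^k$ the natural isomorphism
\begin{equation*}
\mathrm{Hom}_{\hat{\mathcal{C}}_{\lambda,\xi}^k}(\mathrm{F}_\chi P, -) \;\cong\; \mathrm{Hom}_{\Gamma_\chi\text{-}\mathrm{mod}^k}\bigl(P, \mathrm{G}_\chi(-)\bigr)
\end{equation*}
expresses the left-hand side as a composite of two exact functors, whence $\mathrm{F}_\chi P$ is projective. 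The only mild obstacle in the entire proof is checking that the radical filtration argument really goes through with a merely right exact functor; everything else is a formal consequence of the adjunction inherited from the ambient categories.
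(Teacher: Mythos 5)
Your proposal is correct and takes essentially the same route as the paper: part (i) by exactness of $\mathrm{G}_{\chi}$ in one direction and Lemma~\ref{lem8-1} (simples go to simples) in the other, and part (ii) by the standard fact that a left adjoint of an exact functor preserves projectives. The paper's proof is merely terser; your image-filtration argument on $\mathrm{rad}^{i}V$ is exactly the detail left implicit when the paper says that the statement about $\mathrm{F}_{\chi}$ ``follows from Lemma~\ref{lem8-1}''.
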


\begin{proof}
The exactness of $\mathrm{G}_{\chi}$ implies that $\hat{\mathcal{C}}_{\lambda,\xi}^k$ gets mapped
to $\Gamma_{\chi}\text{-}\mathrm{mod}^k$. That $\mathrm{F}_{\chi}$ maps $\Gamma_{\chi}\text{-}\mathrm{mod}^k$ to $\hat{\mathcal{C}}_{\lambda,\xi}^k$ 
follows from Lemma~\ref{lem8-1}. This implies claim \eqref{cor8-2.1}.
Then the functor $\mathrm{F}_{\chi}$ is left adjoint
to an exact functor and hence sends projective modules to projective modules
or zero. Since $\mathrm{F}_{\chi}$ does not annihilate simple modules, claim \eqref{cor8-2.2} follows.
\end{proof}

\begin{corollary}\label{cor8-3}
The functor $\mathrm{F}_{\chi}$ is full on projective modules. 
In particular, $\hat{D}_{\lambda,\xi}$ is  a quotient of
$\Gamma_{\chi}$.
\end{corollary}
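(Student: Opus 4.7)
The plan is to establish fullness on projectives via a direct adjunction argument, and then to identify the image of a canonical projective cover as a projective generator of $\hat{\mathcal{C}}^k_{\lambda,\xi}$, whose endomorphism algebra then provides the required quotient of $\Gamma_{\chi}$.

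First I would prove fullness. For projective objects $P,Q$ in $\Gamma_{\chi}\text{-}\mathrm{mod}^k$, the adjunction from Corollary~\ref{cor8-2}\eqref{cor8-2.1} identifies
\[
\mathrm{Hom}_{\mathfrak{g}}(\mathrm{F}_{\chi}P,\mathrm{F}_{\chi}Q)\cong \mathrm{Hom}_{\Gamma_{\chi}}(P,\mathrm{G}_{\chi}\mathrm{F}_{\chi}Q),
\]
and under this isomorphism the canonical map $\mathrm{Hom}_{\Gamma_{\chi}}(P,Q)\to\mathrm{Hom}_{\mathfrak{g}}(\mathrm{F}_{\chi}P,\mathrm{F}_{\chi}Q)$ induced by $\mathrm{F}_{\chi}$ corresponds to postcomposition with $\mathrm{adj}'_Q\colon Q\to\mathrm{G}_{\chi}\mathrm{F}_{\chi}Q$. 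Since $\mathrm{adj}'_Q$ is surjective by Lemma~\ref{lem8} and $P$ is projective, this postcomposition is surjective on Hom groups, giving fullness of $\mathrm{F}_{\chi}$ on projectives.

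Next I would take $P_k^{\Gamma}:=\Gamma_{\chi}/(\mathrm{ker}(\chi))^k$, the projective cover of the unique simple $\Gamma_{\chi}$-module in $\Gamma_{\chi}\text{-}\mathrm{mod}^k$, whose endomorphism ring is the commutative local algebra $\Gamma_{\chi}/(\mathrm{ker}(\chi))^k$ itself. By Corollary~\ref{cor8-2}\eqref{cor8-2.2}, $\mathrm{F}_{\chi}P_k^{\Gamma}$ is projective in $\hat{\mathcal{C}}^k_{\lambda,\xi}$, and the fullness just established yields a surjection
\[
\Gamma_{\chi}/(\mathrm{ker}(\chi))^k\twoheadrightarrow \mathrm{End}_{\mathfrak{g}}(\mathrm{F}_{\chi}P_k^{\Gamma}).
\]
Hence $\mathrm{End}_{\mathfrak{g}}(\mathrm{F}_{\chi}P_k^{\Gamma})$ is commutative and local, so $\mathrm{F}_{\chi}P_k^{\Gamma}$ is indecomposable. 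Since by Corollary~\ref{cor2} the category $\hat{\mathcal{C}}^k_{\lambda,\xi}$ has a unique simple object and therefore, up to isomorphism, a unique indecomposable projective, $\mathrm{F}_{\chi}P_k^{\Gamma}$ must be a projective generator, and its endomorphism algebra is the basic finite dimensional algebra describing $\hat{\mathcal{C}}^k_{\lambda,\xi}$.

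Finally I would pass to the inverse limit in $k$. Using the description of $\hat{D}_{\lambda,\xi}$ in Subsection~\ref{s1.5} as the limit of these basic algebras together with $\Gamma_{\chi}=\varprojlim_k\Gamma_{\chi}/(\mathrm{ker}(\chi))^k$, the surjections above combine to give the desired surjection $\Gamma_{\chi}\twoheadrightarrow\hat{D}_{\lambda,\xi}$. The one step I expect to require care is the compatibility of the surjections at different levels $k$ with the transition maps in both inverse systems; this should follow routinely from naturality of $\mathrm{adj}'$ and of $\mathrm{F}_{\chi}$ with respect to the inclusions $\hat{\mathcal{C}}^k_{\lambda,\xi}\hookrightarrow\hat{\mathcal{C}}^{k+1}_{\lambda,\xi}$, after which surjectivity passes to the limit because each truncation is finite dimensional.
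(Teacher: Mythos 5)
Your proposal is correct and follows essentially the same route as the paper: fullness on projectives is obtained from the surjectivity of the unit $\mathrm{adj}'$ (Lemma~\ref{lem8}) together with projectivity, with the adjunction isomorphism (equivalently, the triangle identity used in the paper's diagram chase) converting the lift into the statement $\varphi=\mathrm{F}_{\chi}\psi$. Your second and third paragraphs merely spell out, correctly, the ``in particular'' step that the paper leaves implicit, identifying $\mathrm{F}_{\chi}\bigl(\Gamma_{\chi}/(\mathrm{ker}\chi)^k\bigr)$ as the indecomposable projective generator of $\hat{\mathcal{C}}^k_{\lambda,\xi}$ and passing to the inverse limit, where surjectivity survives by finite dimensionality of each truncation.
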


\begin{proof}
Let $P\in \Gamma_{\chi}\text{-}\mathrm{mod}$ be projective and $\varphi\in \mathrm{Hom}_{\mathfrak{g}}(\mathrm{F}_{\chi}P,\mathrm{F}_{\chi}P)$. Thanks 
to Lemma \ref{lem8} and the projectivity of $P$, the 
morphism $\mathrm{G}_{\chi}\varphi$ fits into the commutative diagram
\begin{equation}\label{eq3}
\xymatrix{
P\ar@{.>}[d]^{\psi}\ar@{->>}[rr]^{\mathrm{adj}'_P} &&
\mathrm{G}_{\chi}\mathrm{F}_{\chi}P\ar[d]^{\mathrm{G}_{\chi}\varphi} \\
P\ar@{->>}[rr]^{\mathrm{adj}'_P} && \mathrm{G}_{\chi}\mathrm{F}_{\chi}P
}
\end{equation}
for some $\psi:P\to P$. Applying $\mathrm{F}_{\chi}$ gives the square on 
the left hand side of the commutative diagram:
\begin{equation}\label{eq4}
\xymatrix{
\mathrm{F}_{\chi}P\ar@{.>}[d]^{\mathrm{F}_{\chi}\psi}
\ar@{->>}[rr]^{\mathrm{F}_{\chi}\mathrm{adj}'_P}
&& \mathrm{F}_{\chi}\mathrm{G}_{\chi}\mathrm{F}_{\chi}P
\ar[d]^{\mathrm{F}_{\chi}\mathrm{G}_{\chi}\varphi}
\ar@{->>}[rr]^{\mathrm{adj}_{\mathrm{F}_{\chi}P}}
&& \mathrm{F}_{\chi}P \ar[d]^{\varphi}\\
\mathrm{F}_{\chi}P\ar@{->>}[rr]^{\mathrm{F}_{\chi}\mathrm{adj}'_P} && \mathrm{F}_{\chi}\mathrm{G}_{\chi}\mathrm{F}_{\chi}P
\ar@{->>}[rr]^{\mathrm{adj}_{\mathrm{F}_{\chi}P}}
&&\mathrm{F}_{\chi}P.
}
\end{equation}
By the adjointness property, the compositions in each row are the identity
maps. Hence $\varphi=\mathrm{F}_{\chi}\psi$ and the claim follows.
\end{proof}

\begin{proof}[Proof of Proposition \ref{prop9}:]
For the algebra $\hat{D}_{\lambda,\xi}$ the claim follows 
from Corollary~\ref{cor8-3} and the fact that the algebra  
$\Gamma_{\chi}$ is commutative. Thus the algebra 
${D}_{\lambda,\xi}$ is commutative as it is a quotient of the
commutative algebra $\hat{D}_{\lambda,\xi}$.
\end{proof}

\begin{proof}[Proof of Corollary \ref{cor10}:]
It is enough to prove the statement under the assumption that
$m\in M_{\mu}$, $\mu\in \xi$. Define
$I:= \bigcap_{N\in \hat{\mathcal{C}}_{\lambda,\xi}}\mathrm{Ann}_{U_0}N_{\mu},$
which is an ideal of $U_0$. Let $L$ be a simple object of
$\hat{\mathcal{C}}_{\lambda,\xi}$ and $\pi:U_0\to \mathbb{C}$ be the
homomorphism defining the simple $U_0$-module $L_{\mu}$. Then
$\pi(I)=0$ and hence we have the induced morphism
$\overline{\pi}:U_0/I\to \mathbb{C}$. By the
PBW Theorem the algebra $U$ is free both as a left and as a right
$U_0$-module. Hence the usual induction and restriction define
an equivalence between $\hat{\mathcal{C}}_{\lambda,\xi}$ and the category
of finite-dimensional modules over the completion $\mathtt{U}$ of
$U_0/I$ with respect to the kernel of $\overline{\pi}$.
By Proposition~\ref{prop9},
the algebra $\mathtt{U}$ is commutative. The claim follows.
\end{proof}

\begin{proof}[Proof of Corollary \ref{cor11}:]
As mentioned in Subseection\ref{s1.6}, 
the algebra $\Gamma$ is a polynomial algebra in
five  variables (see \cite{DFO}).
Hence the quotient $\Gamma_{\chi}/(\mathrm{ker}(\chi))^2$
is finite-dimensional. As $\hat{D}_{\lambda,\xi}$ is  a quotient of
$\Gamma_{\chi}$ (Corollary~\ref{cor8-3}), it follows
that $\dim \mathrm{Ext}^1_{\mathfrak{g}}(L,L)$ does not exceed
the dimension of $\Gamma_{\chi}/(\mathrm{ker}(\chi))^2$.
\end{proof}

\subsection{Density of the functor $\mathrm{F}$ (proof of 
Theorem~\ref{thm6})}\label{s2.4}

In this subsection we complete the proof of Theorem~\ref{thm6}.
For this we are left to show that the functor $\mathrm{F}$ is dense 
(i.e. essentially surjective). We will
prove this by induction on $n$. By Proposition \ref{prop3} we might 
restrict to blocks $\hat{\mathcal{C}}_{\la,\xi}$, where 
$\xi\in \mathfrak{h}^*/\mathbb{Z}$ is represented by an element 
$\mu$ such that $\mu(h_i)=a_i-a_{i+1}$ with
\begin{equation}
\label{assumption}
a_i+a_{i+1}\not\in\mathbb{Z}\quad\text{ for all }i.
\end{equation}
In the following we will only consider such blocks, hence assume the functor $\mathrm{F}$ has values in a block for which the condition \eqref{assumption}
is satisfied. We begin with the starting point of our induction:

\begin{lemma}\label{lem12}
Let $n=2$ and $\lambda$ be as in case \eqref{case1}.
Then the functor $\mathrm{F}$ is dense.
\end{lemma}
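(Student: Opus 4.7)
The plan is to show density of $\mathrm{F}$ by constructing, for each $M\in\hat{\mathcal{C}}_{\lambda,\xi}$, a finite-dimensional $\mathbb{C}[[x_1,x_2]]$-module $V$ and an isomorphism $\mathrm{F}V\cong M$; combined with the full faithfulness of $\mathrm{F}$ shown in Subsection~\ref{s2.2}, this will yield the desired equivalence. Given $M$, fix $\mu\in\xi$ and set $V:=M^\mu$, the generalized weight space at $\mu$. The strategy is first to equip $V$ with two commuting nilpotent operators $X_1,X_2$, and second to build $\phi\colon\mathrm{F}V\to M$ weight space by weight space using the bijectivity (by cuspidality) of the root vectors.

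For the first step, on $V$ the Cartan element $h$ acts as $\mu(h)\mathrm{Id}_V+N_h$ with $N_h$ nilpotent, while the Casimir $C=(h+1)^2+4fe$ is central. As the unique simple object of the block is $N(\mathbf{a})$, on which $C$ acts by the scalar $(a_1+a_2+1)^2$, on $V$ the Casimir acts as $(a_1+a_2+1)^2\mathrm{Id}_V+N_C$ with $N_C$ nilpotent. Assumption~\eqref{assumption} gives $a_1+a_2+1\neq 0$, so $C|_V$ has a unique square root $S$ with scalar part $a_1+a_2+1$, obtained from the binomial series applied to $N_C$. I then set
\begin{displaymath}
X_1:=\tfrac12\bigl(N_h+S-(a_1+a_2+1)\mathrm{Id}_V\bigr),\qquad
X_2:=\tfrac12\bigl(-N_h+S-(a_1+a_2+1)\mathrm{Id}_V\bigr);
\end{displaymath}
both operators are nilpotent and commute (since $h$ and $C$ do), and satisfy $X_1-X_2=N_h$ as well as $(X_1+X_2+a_1+a_2+1)^2=C|_V$. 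This makes $V$ into a finite-dimensional $\mathbb{C}[[x_1,x_2]]$-module.

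Next I build $\phi$. Since $M$ is cuspidal, $e\colon M^{\mu+k\alpha}\to M^{\mu+(k+1)\alpha}$ is a linear isomorphism for every $k\in\mathbb{Z}$, and since $a_2\notin\mathbb{Z}$ by~\eqref{assumption}, the operator $X_2+a_2-k$ is invertible on $V$ for all $k$. I let $\phi_0\colon V_{(0,0)}\to M^\mu$ be the identity and define $\phi_k\colon V_{(k,-k)}\to M^{\mu+k\alpha}$ recursively by
\begin{displaymath}
\phi_k:=e\circ\phi_{k-1}\circ(X_2+a_2-k+1)^{-1}\qquad(k>0),
\end{displaymath}
and analogously by iterating $f$ (which is also bijective) and inverting $X_1+a_1+k$ for $k<0$. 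Each $\phi_k$ is then a linear bijection, and $\phi=\bigoplus_k\phi_k$ intertwines the action of $e$ tautologically.

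It remains to check that $\phi$ also intertwines $h$ and $f$. For $h$, the relation $[h,e]=2e$ together with $[X_2,N_h]=0$ (immediate from the commutativity of $X_1,X_2$) allows an induction on $|k|$ showing that $\phi_k$ intertwines $h$, matching the $h$-formula of Lemma~\ref{lem7}. For $f$, the key identity $4ef=C-(h-1)^2$ in $U(\mathfrak{g})$ (a consequence of $[e,f]=h$) shows that $\phi$ intertwines $ef$, because it intertwines both $h$ and $C$ by construction (using centrality of $C$ for the latter). Combined with the intertwining of $e$ and the injectivity of $e$ on $M$, this forces $f\circ\phi_k=\phi_{k-1}\circ(X_1+a_1+k)$, which is the $f$-formula of Lemma~\ref{lem7}. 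The hardest step is this last compatibility with $f$, and it works precisely because assumption~\eqref{assumption} both makes the square root defining $X_1+X_2$ available and renders the scalars $X_2+a_2-k$ and $X_1+a_1+k$ invertible on $V$, so that $\phi_k$ is well-defined and the polynomial identities in $h$ and $C$ translate cleanly into identities of $X_1,X_2$.
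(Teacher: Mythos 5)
Your construction of $X_1,X_2$ is exactly the paper's: the authors also pass to $V=M^{\mu}$, take the operators given by $h_1$ and the Casimir $C$, use regularity (equivalently $a_1+a_2+1\neq 0$) to extract a square root of $C|_V$ that is a polynomial in it, and recover $X_1,X_2$ from these two commuting operators (your formulas agree with \eqref{X1X2} after writing $Y_1=(a_1-a_2)\mathrm{Id}_V+N_h$). Where you diverge is the concluding step: the paper simply quotes the known $\mathfrak{sl}_2$ fact that an object of $\hat{\mathcal{C}}_{\lambda,\xi}$ is determined up to isomorphism by the actions of $h_1$ and $C$ on one (generalized) weight space (citing \cite[Chapter~3]{Maz2}), so that $\mathrm{F}V\cong M$ follows at once, whereas you reprove this uniqueness by hand: you build $\phi=\oplus_k\phi_k$ recursively using the bijectivity of $e_{12}$ on the cuspidal module and the invertibility of $X_2+a_2-k+1$ (here $a_2\notin\mathbb{Z}$ is really a consequence of cuspidality of $N(\mathbf{a})$ rather than of \eqref{assumption}, which gives $a_1+a_2\notin\mathbb{Z}$), and then verify the $h$- and $f$-intertwining via $[h,e]=2e$ and $4ef=C-(h-1)^2$ together with injectivity of $e$; this computation is correct and is in effect a self-contained proof of the cited classification statement. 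The trade-off is clear: your argument makes the lemma independent of the $\mathfrak{sl}_2$ literature at the cost of a page of bookkeeping (note one small presentational slip: on the $k<0$ side it is the $f$-intertwining that is tautological and the $e$-intertwining that must be checked, by the symmetric identity $4fe=C-(h+1)^2$), while the paper's version is shorter and emphasizes that the only new content for the induction in Subsection~\ref{s2.4} is the explicit form of $X_1,X_2$.
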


\begin{remark}
{\rm 
As stated in the paragraph after Theorem~\ref{thm6}, this theorem is known 
to be true in case $n=2$, hence we do not need to prove Theorem \ref{thm6}
in this case. However, Lemma~\ref{lem12} is a stronger statement than just
Theorem \ref{thm6}, it gives us some information about the functor
$\mathrm{F}$, which we will need later on to prove the general case.
Our assumption \eqref{assumption} excludes case \eqref{case2} and we 
do not know if the functor $\mathrm{F}$ is dense in that case. For our 
induction argument it is enough to have the density as formulated in 
Lemma \ref{lem12}.
}
\end{remark}

\begin{proof}
Let $M\in \hat{\mathcal{C}}_{\lambda,\xi}$. Then $\mu=a_1-a_2\in\xi$. 
Set  $V=M_{\mu}$. Let $Y_1$ and $Y$ be the linear operators on $V$ 
representing the actions of the element $h_1$ and the 
$\mathfrak{sl}_2$-Casimir element $C$ 
(see Subsection~\ref{s2.2}), respectively. As we are in case~\eqref{case1}, 
the element $\lambda$ is regular and we have that the (unique) eigenvalue
of $Y$ is nonzero, in particular, $Y$ is invertible. Let $Y'$
denote any square root of $Y$, which is a polynomial in $Y$.
Then $Y'$ commutes with $Y_1$ and $Y$. Set
\begin{eqnarray}\label{X1X2}
X_1:=\frac{Y_1+Y'-\mathrm{Id}_V}{2}-a_1\mathrm{Id}_V,&&
X_2:=\frac{Y'-Y_1-\mathrm{Id}_V}{2}-a_2\mathrm{Id}_V.
\end{eqnarray}
Then both $X_1$ and $X_2$ are polynomials in $Y_1$ and $Y$, in particular,
$X_1$ and $X_2$ commute. A direct calculation, using the definition of 
the functor $\mathrm{F}$, shows that the action of $h_1$ on
$(\mathrm{F}V)_{\mu}$ is given by $Y_1$ and the action of $C$
on $(\mathrm{F}V)_{\mu}$ is given by $Y$. Since any module
in $\hat{\mathcal{C}}_{\lambda,\xi}$ is uniquely determined 
(up to isomorphism) by the
actions of $h_1$ and $C$ (see for example \cite[Chapter~3]{Maz2}),
it follows that $\mathrm{F}V\cong M$. This completes the proof.
\end{proof}

Let $M\in \hat{\mathcal{C}}_{\lambda,\xi}$, let $Y$ be the linear operator
representing the action of $C$ on $V=M_{\mu}$, and $Y_1,Y_2,\dots,Y_n$ be the
linear operators representing the actions of $h_1,h_2,\dots,h_n$ on
$M_{\mu}$, respectively. Define $X_1$ and $X_2$ by \eqref{X1X2} and then
inductively define $X_{i+1}=X_i-Y_i-(a_i-a_{i+1})\mathrm{Id}_V$, 
$i=2,3,\dots,n-1$. Then $X_1,X_2,\dots,X_n$ are commuting nilpotent
linear operators on $V$.

The $U(\mathfrak{sl}_{n-1})$-module $M'=U(\mathfrak{sl}_{n-1})M_{\mu}$ 
is cuspidal. Now either $n=3$ and $a_1+a_2\not\in\mathbb{Z}$, then the
$U(\mathfrak{sl}_{2})$-module $U(\mathfrak{sl}_{2})M_{\mu}$
has non-integral central character, which allows us to use 
Lemma~\ref{lem12}, or $n>3$ where we can use the induction hypothesis 
to conclude that the module $M'$ is in the image of the functor 
$\mathrm{F}$ for the algebra $U(\mathfrak{sl}_{n-1})$.
By definition of the functor $\mathrm{F}$, we have 
$M'\cong N':=\oplus_{\mathbf{b}}V_{\mathbf{b}}$, where $\mathbf{b}$ runs
through all elements $(b_1,b_2,\ldots b_{n-1},0)\in\mathbb{Z}^n$ such that
$b_1+b_2+\dots+b_{n-1}=0$; and according to Lemma \ref{lem7} 
the element $e_{i,j}$, $i,j<n$, $i\neq j$, acts on $N'$ by 
mapping $v\in V_{\mathbf{b}}$ to  $(X_j+(a_j+b_j)\mathrm{Id}_V)v\in
V_{\mathbf{b}+\varepsilon_{i}-\varepsilon_{j}}$. To complete the proof it 
is enough to show the following unique extension property (since from 
the uniqueness it will then follow that this extension is isomorphic to 
either of $M$ and $\mathrm{F}V$, implying $M\cong \mathrm{F}V$):

\begin{proposition}\label{prop1001}
The pair $(N',X_n)$ {\bf uniquely} extends
to a cuspidal $\mathfrak{g}$-module. That is,
under the assumption \eqref{assumption} there is a unique up to 
isomorphism cuspidal $\mathfrak{g}$-module $N$ such that 
$N=UN_{\mu}$, $U(\mathfrak{sl}_{n-1})N_{\mu}=N'$ and which gives the
linear operator $X_n$ when computed as above.
\end{proposition}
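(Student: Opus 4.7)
We prove existence and uniqueness separately.

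For existence, the module $\mathrm{F}V$ constructed in Subsection~\ref{s2.2} is the desired extension: thanks to assumption~(\ref{assumption}), the operators $X_j+(a_j+b_j)\mathrm{Id}_V$ are invertible for every $\mathbf{b}$, so $\mathrm{F}V$ is cuspidal. By construction $\mathrm{F}V=U(\mathfrak{g})(\mathrm{F}V)_\mu$, its $U(\mathfrak{sl}_{n-1})$-submodule generated by $V_\mathbf{0}$ is compatibly identified with $N'$, and a direct calculation using Lemma~\ref{lem7} recovers the prescribed operator $X_n$.

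For uniqueness, let $N$ be any cuspidal $\mathfrak{g}$-module satisfying the hypotheses. The strategy is to reduce to the rank-two case via the $\mathfrak{sl}_2$-subalgebra $\mathfrak{a}\subset\mathfrak{g}$ generated by $e_{n-1,n}$, $e_{n,n-1}$, and $h_{n-1}$. Since all roots of the embedded $\mathfrak{sl}_{n-1}$ have zero $n$-th coordinate, for each $\mathbf{b}$ with $b_n=0$ and $\sum_i b_i=0$ the strip
\begin{equation*}
N^{(\mathbf{b})}:=\bigoplus_{k\in\mathbb{Z}}N_{\mu_{\mathbf{b}+k(\varepsilon_n-\varepsilon_{n-1})}}
\end{equation*}
is $\mathfrak{a}$-stable and is a cuspidal $\mathfrak{a}$-module. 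After adjusting parameters via Proposition~\ref{prop3} (so that the $\mathfrak{a}$-central character is non-integral, using the case-by-case shape from Subsection~\ref{s1.2}), Lemma~\ref{lem12} applied to $\mathfrak{a}$ determines the $\mathfrak{a}$-structure on $N^{(\mathbf{b})}$ up to isomorphism from two operators on $V_\mathbf{b}$: the action of $h_{n-1}$ and the action of the $\mathfrak{a}$-Casimir $C_\mathfrak{a}=(h_{n-1}+1)^2+4e_{n,n-1}e_{n-1,n}$.

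Both operators are forced by the given data. The action of $h_{n-1}$ on $V=V_\mathbf{0}$ equals $Y_{n-1}=X_{n-1}-X_n-(a_{n-1}-a_n)\mathrm{Id}_V$, so it is determined by $X_n$ together with the $\mathfrak{sl}_{n-1}$-structure (which supplies $X_1,\ldots,X_{n-1}$). Its action on an arbitrary $V_\mathbf{b}$ with $b_n=0$ is obtained by transport along chains of $\mathfrak{sl}_{n-1}$-root vectors, using the commutation $[h_{n-1},e_{i,j}]=(\delta_{i,n-1}-\delta_{j,n-1})e_{i,j}$. Similarly, $C_\mathfrak{a}\in U_0$ acts on $V_\mathbf{b}$ as a polynomial in the $X_i$'s whose value on $V_\mathbf{0}$ is dictated by the $\mathfrak{g}$-relations (and verified on $\mathrm{F}V$), and propagates along strips by the same commutator argument. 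Once the $\mathfrak{a}$-action is pinned down on every strip, the remaining root vectors $e_{i,n}$ and $e_{n,i}$ for $i<n-1$ follow from the Chevalley-type commutators $e_{i,n}=[e_{i,n-1},e_{n-1,n}]$ and $e_{n,i}=[e_{n,n-1},e_{n-1,i}]$, which forces $N\cong\mathrm{F}V$. The main obstacle is the coherent transport of $h_{n-1}$ and $C_\mathfrak{a}$ along $\mathfrak{sl}_{n-1}$-root strings, which reduces to checking polynomial identities in the $a_i$'s---these must hold in every extension, and they are verified by the explicit model $\mathrm{F}V$.
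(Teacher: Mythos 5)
Your existence step and the use of Lemma~\ref{lem12} on $\mathfrak{sl}_2$-strips are reasonable, but the uniqueness argument has a genuine gap exactly at the point where the real work lies. To apply Lemma~\ref{lem12} to the strip through $V_{\mathbf{0}}$ you need the action of the Casimir $C_{\mathfrak{a}}=(h_{n-1}+1)^2+4e_{n,n-1}e_{n-1,n}$ on $V_{\mathbf{0}}$, and this is \emph{not} part of the given data $(N',X_n)$: the data determine $Y_{n-1}=X_{n-1}-X_n-(a_{n-1}-a_n)\mathrm{Id}$, i.e.\ the $h_{n-1}$-action, but the operator $e_{n,n-1}e_{n-1,n}$ is precisely one of the unknowns. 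You dismiss this with ``dictated by the $\mathfrak{g}$-relations (and verified on $\mathrm{F}V$)'', but verifying an identity on the model $\mathrm{F}V$ only tells you what the answer is there; it does not show that an arbitrary extension $N$ is forced to have the same value. Showing that it \emph{is} forced is the content of the paper's Lemmata~\ref{lem1002} and~\ref{lem1003}: after fixing bases and extending the generators commuting with $e_{n-1,n}$, the actions of $e_{n-2,n-1}$ and $e_{n,n-1}$ are pinned down by the commutator and Serre relations, and crucially the resulting quadratic relation factors as $\bigl(y(a+2)-(b-1)(a+1)\bigr)(y-b)=0$, so one needs the assumption \eqref{assumption} ($a_i+a_j\notin\mathbb{Z}$) to rule out the spurious solution. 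Your proposal never engages with this dichotomy, which is a sign that the essential difficulty has been bypassed rather than solved.

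A second, related problem is the proposed ``propagation'' of $C_{\mathfrak{a}}$ to the other strips $V_{\mathbf{b}}$ ``by the same commutator argument'' as for $h_{n-1}$. For $h_{n-1}$ this works because $[h_{n-1},e_{i,j}]$ is a scalar multiple of $e_{i,j}$ for all root vectors of the embedded $\mathfrak{sl}_{n-1}$. But $C_{\mathfrak{a}}$ does not commute (even up to scalars) with $e_{i,n-1}$ or $e_{n-1,i}$ for $i\le n-2$; the commutators involve the root vectors $e_{i,n}$ and $e_{n,i}$, which at that stage of your argument are still undetermined, so the transport is circular. Moreover, even granting the strip-by-strip $\mathfrak{a}$-structure up to isomorphism, uniqueness of $N$ requires pinning down the actual operators compatibly with the fixed $\mathfrak{sl}_{n-1}$-action on $N'$, not just isomorphism types of restrictions; the gluing of the $\mathfrak{a}$-actions to the $\mathfrak{sl}_{n-1}$-action is exactly where the paper's explicit computations with the pictures \eqref{eq11n} and their analogues are needed.
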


\begin{proof}
Since $a_n\not\in\mathbb{Z}$, the endomorphism 
$X_n+(a_n+b_n)\mathrm{Id}_V$ 
is invertible for all $b_m\in\mathbb{Z}$.
As the action of $e_{n-1,n}$ on $N$ is bijective, we may fix
bases for all weight spaces of $N$ such that the $U(\mathfrak{sl}_{n-1})$-action on $N'$ is as described above and the 
action of $e_{n-1,n}$ is given by the definition of $\mathrm{F}V$.
Now we have to define actions of all
$e_{i,i+1}$, $i=1,2,\dots,n-2$, and all $e_{i+1,i}$, $i=1,2,\dots,n-1$ on $N$.
If $i<n-2$, then $e_{i,i+1}$ and $e_{n-1,n}$ commute and hence the action
of $e_{i,i+1}$ extends uniquely from $N'$ to the rest of $N$.
Similarly with the action of all $e_{i+1,i}$, $i=1,2,\dots,n-2$. Pictorially this can be illustrated as follows:

\begin{equation}\label{eq11}
\xymatrix{
&\bullet\ar@/^/@{.>}[rr] &&
\bullet\ar@/^/@{.>}[ll]\ar@/^/@{.>}[rr]&&
\bullet\ar@/^/@{.>}[ll]\ar@/^/@{.>}[rr]&&
\bullet\ar@/^/@{.>}[ll]\\
N':&\bullet\ar@/^/[u]\ar@/^/[rr]&&
\bullet\ar@/^/[u]\ar@/^/[rr]\ar@/^/[ll]&&
\bullet\ar@/^/[u]\ar@/^/[rr]\ar@/^/[ll]&&
\bullet\ar@/^/[u]\ar@/^/[ll]\\
}
\end{equation}
The $\bullet$'s represent weight spaces with fixed basis vectors, with $N'$ represented at the bottom. The arrows pointing up indicate the action of $e_{n-1,n}$, right and left arrows represent the actions of $e_{i,i+1}$ 
and $e_{i+1,i}$, respectively. Solid arrows represent already known 
actions, and dotted indicate the actions for which we 
show that they are uniquely defined.

This leaves us with the elements $e_{n-2,n-1}$ and $e_{n,n-1}$. 
That the action of these elements is uniquely defined, follows from 
Lemmata~\ref{lem1002} and \ref{lem1003} below.
\end{proof}

\begin{lemma}\label{lem1002}
Under the assumption \eqref{assumption} there is a unique way to
define the action of $e_{n-2,n-1}$ on $N$. 
\end{lemma}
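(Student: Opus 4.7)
The plan is to exploit the bijectivity of $e_{n-1,n}$ on $N$, guaranteed by assumption \eqref{assumption} (since $a_n\notin\mathbb{Z}$ and $X_n$ is nilpotent, the operator $X_n+(a_n+b_n)\mathrm{Id}_V$ by which $e_{n-1,n}$ acts on each weight space $V_{\mathbf{b}}$ is invertible), together with the commutator $[e_{n-2,n-1},e_{n-1,n}]=e_{n-2,n}$ and the fact that $[e_{n-2,n},e_{n-1,n}]=0$. The idea is to express the action of $e_{n-2,n-1}$ anywhere on $N$ in terms of data already fixed on $N'$.

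Concretely, any $v\in V_{\mathbf{b}}$ can be written as $v=e_{n-1,n}^{-b_n}(w)$ for a unique $w$ in a weight space of $N'$ (using the inverse of $e_{n-1,n}$). The derivation identity $[e_{n-2,n-1},e_{n-1,n}^k]=k\,e_{n-1,n}^{k-1}\,e_{n-2,n}$ holds for all $k\in\mathbb{Z}$, yielding
\[
e_{n-2,n-1}(v)=e_{n-1,n}^{-b_n}\!\bigl(e_{n-2,n-1}(w)\bigr)-b_n\,e_{n-1,n}^{-b_n-1}\!\bigl(e_{n-2,n}(w)\bigr).
\]
The first summand on the right is fixed by the $\mathfrak{sl}_{n-1}$-structure on $N'$ (as $e_{n-2,n-1}\in\mathfrak{sl}_{n-1}$), so the action of $e_{n-2,n-1}$ on all of $N$ is determined once the action of $e_{n-2,n}$ on $N'$ is known.

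To pin down $e_{n-2,n}|_{N'}$, I would use the identity $[e_{n-1,n-2},e_{n-2,n}]=e_{n-1,n}$ in $\mathfrak{sl}_n$. Since $e_{n-1,n-2}$ commutes with $e_{n-1,n}$ (disjoint root supports), its action on $N$ extends canonically from the known action on $N'$ via powers of $e_{n-1,n}$. Setting $\tilde{S}(w):=e_{n-1,n}^{-1}(e_{n-2,n}(w))$ for $w\in N'$ (which remains in $N'$ by a short weight computation), this commutator translates into the operator equation $[e_{n-1,n-2},\tilde{S}]=\mathrm{Id}_{N'}$; together with $[e_{n-2,n},e_{j,j+1}]=0=[e_{n-2,n},e_{j+1,j}]$ for $j\leq n-4$ (forcing $\tilde{S}$ to commute with the standard copy of $\mathfrak{sl}_{n-3}\subset\mathfrak{sl}_{n-1}$), the operator $\tilde{S}$ is uniquely determined. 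The hard part will be this final uniqueness argument for $\tilde{S}$, which rests on the bijective (cuspidal) action of the root vectors on $N'$ and on the inductive realization of $N'$ as the image of the functor $\mathrm{F}$ for $\mathfrak{sl}_{n-1}$ with its commuting nilpotent operators $X_1,\dots,X_{n-1}$.
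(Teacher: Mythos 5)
Your reduction step is fine as far as it goes: since $e_{n-2,n}$ commutes with $e_{n-1,n}$, the identity $[e_{n-2,n-1},e_{n-1,n}^k]=k\,e_{n-1,n}^{k-1}e_{n-2,n}$ does hold for all $k\in\mathbb{Z}$, so the action of $e_{n-2,n-1}$ on all of $N$ is determined by its (known) action on $N'$ together with the operator $e_{n-2,n}|_{N'}$, i.e.\ by your $\tilde S$. But this only restates the problem: $e_{n-2,n}=[e_{n-2,n-1},e_{n-1,n}]$ involves the unknown $e_{n-2,n-1}$ on $e_{n-1,n}N'$, so proving uniqueness of $\tilde S$ is exactly the content of the lemma, and you concede you have no argument for it ("the hard part will be this final uniqueness argument"). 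Worse, the constraints you propose to use cannot suffice. The relation $[e_{n-1,n-2},\tilde S]=\mathrm{Id}_{N'}$ is linear of first order in $\tilde S$: already in the baby case $n=3$ (where your $\mathfrak{sl}_{n-3}$-condition is empty), writing the action on the one-dimensional weight spaces of a simple $N'$ as $e_{2,1}w_k=c_kw_{k-1}$, $\tilde S w_k=s_kw_{k+1}$, it becomes the recursion $c_{k+1}s_k-c_ks_{k-1}=1$, whose solution set is a one-parameter family, not a point; and commutation with the copy of $\mathfrak{sl}_{n-3}$ in the indices $1,\dots,n-3$ imposes nothing in the directions $n-2,n-1,n$ where the ambiguity lives.

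What actually pins down the action is not soft commutator bookkeeping but the quadratic (Serre) relations, and this is where assumption \eqref{assumption} enters in an essential way, not merely through invertibility of $e_{n-1,n}$ (which only uses $a_n\notin\mathbb{Z}$, a consequence of cuspidality). In the paper's proof one first uses Corollary~\ref{cor10} (the commutativity statement coming from the $\mathfrak{sl}_3$ analysis) to know that the unknown entries commute with all $X_i$, then combines the linear relation \eqref{eq5} (from $[e_{n-2,n-1},e_{n-1,n-2}]=h_{n-2}$) with the Serre relation, which gives the quadratic equation \eqref{eq6}; these factorize as $\big(y(a+2)-(b-1)(a+1)\big)(y-b)=0$, so there are \emph{two} candidate solutions, and the spurious one is excluded precisely because it would force $a+b\in\mathbb{Z}$, contradicting \eqref{assumption}. (A second Serre relation then handles the negative powers of $e_{n-1,n}$.) Your proposal never brings in the Serre relations or Corollary~\ref{cor10}, and gives \eqref{assumption} no role that could rule out the second root, so the core of the proof is missing.
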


\begin{proof}
Consider the following picture:
\begin{equation}\label{eq11n}
\xymatrix{
\bullet\ar@/^/@{.>}[rr]^{x} &&
\bullet\ar@/^/[ll]^{a+1}\ar@/^/@{.>}[rr]^{y} &&
\bullet\ar@/^/[ll]^{a+2}\\ \\
\bullet\ar@/^/[uu]^{c}\ar@/^/[rr]^{b} &&
\bullet\ar@/^/[uu]^{c}\ar@/^/[rr]^{b-1}\ar@/^/[ll]^{a+1}
&& \bullet\ar@/^/[uu]^{c}\ar@/^/[ll]^{a+2}\\
}
\end{equation}
Arrows pointing up indicate again the action of
$e_{n-1,n}$ (the labels are the corresponding linear maps), but 
right and left arrows represent now the actions of $e_{n-2,n-1}$ 
and $e_{n-1,n-2}$, respectively. Solid arrows represent already known 
actions, and dotted  indicate the actions for which we would
like to show that they are uniquely defined. 
There are $k,l,m\in \mathbb{Z}$ such that the coefficient $a$, $b$ and 
$c$ are of the form $X_{n-2}+(a_{n-2}+k)\mathrm{Id}_V$, 
$X_{n-1}+(a_{n-1}+l)\mathrm{Id}_V$ and $X_{n}+(a_{n}+m)\mathrm{Id}_V$,
respectively. By Corollary~\ref{cor10}, the action of the 
$\mathfrak{sl}_2$-Casimir element $(h_{n-1}+1)^2+4e_{nn-1}e_{n-1n}$ 
commutes with the action of all Cartan elements and with the action of
the $\mathfrak{sl}_2$-Casimir element $C$ from above. This implies that 
either of $x(a+1)$ and $y(a+2)$ commutes with any $X_i$. As
both, $a+1$ and $a+2$, are invertible, $x$ and $y$
commute with all $X_i$'s. From the relation
\begin{displaymath}
e_{n-2,n-1}e_{n-1,n-2}-e_{n-1,n-2}e_{n-2,n-1}=
e_{n-2,n-2}-e_{n-1,n-1}
\end{displaymath}
we obtain the equation
\begin{equation}\label{eq5}
x(a+1)-y(a+2)=a-b+1.
\end{equation}
From the Serre relation
\begin{displaymath}
e_{n-2,n-1}^2e_{n-1,n}-
2e_{n-2,n-1}e_{n-1,n}e_{n-2,n-1}+e_{n-1,n}e_{n-2,n-1}^2=0,
\end{displaymath}
taking into account that $c$ is invertible,
we obtain the equation
\begin{equation}\label{eq6}
xy-2yb+b(b-1)=0.
\end{equation}
Multiplying \eqref{eq6} by the invertible element $(a+1)$ and
inserting $x(a+1)$ from \eqref{eq5} we get:
\begin{displaymath}
y\big(y(a+2)+a-b+1\big)-2yb(a+1)+b(b-1)(a+1)=0.
\end{displaymath}
The latter factorizes as
\begin{equation}
\big(y(a+2)-(b-1)(a+1)\big)(y-b)=0.
\end{equation}

In the case of a simple module we know that $y=b$. Note that the equality
$b=(b-1)(a+1)/(a+2)$ implies $a+b\in \mathbb{Z}$. As we have assumed that
$a_i+a_j\not\in\mathbb{Z}$ for all $i,j$, it follows that the element
$(b-1)(a+1)/(a+2)$ acts nonzero on the corresponding weight space
of a simple module, hence invertible
on the corresponding weight space of any module. This implies
that $y=b$, that is $y$ is uniquely defined. This fact defines 
inductively the action of $e_{n-2,n-1}$ on the vector space
$(e_{n-1,n})^kN'$ for all $k\in\mathbb{N}$ uniquely. 

To determine the action of $e_{n-2,n-1}$ on the vector space
$e_{n-1,n}^{-k}N'$ for all $k\in\mathbb{N}$ we consider the following
picture with the same notation as in \eqref{eq11}:
\begin{displaymath}
\xymatrix{
\bullet\ar@/^/[rr]^{b+1} &&
\bullet\ar@/^/[ll]^{a+1}\\ \\
\bullet\ar@/^/[uu]^{c}\ar@/^/[rr]^{b} &&
\bullet\ar@/^/[uu]^{c}\ar@/^/[ll]^{a+1}\\ \\
\bullet\ar@/^/[uu]^{c+1}\ar@/^/@{.>}[rr]^{x} &&
\bullet\ar@/^/[uu]^{c+1}\ar@/^/[ll]^{a+1}
}
\end{displaymath}
From the Serre relation
\begin{displaymath}
e_{n-1,n}^2e_{n-2,n-1}-
2e_{n-1,n}e_{n-2,n-1}e_{n-1,n}+e_{n-2,n-1}e_{n-1,n}^2=0,
\end{displaymath}
using the invertibility of $c$ and $c+1$ we get  $x-2b+(b+1)=0$. 
Hence there is a unique solution $x=b-1$. Inductively one defines 
the only possible action of $e_{n-2,n-1}$ on the vector space
$e_{n-1,n}^{-k}N'$ for all $k\in\mathbb{N}$. This completes the proof.
\end{proof}

\begin{lemma}\label{lem1003}
There is a unique way to define the action of $e_{n,n-1}$ on $N$.
\end{lemma}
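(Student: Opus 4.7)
The argument is a direct transcription of the proof of Lemma~\ref{lem1002}, with $e_{n,n-1}$ playing the role of the unknown generator. Since $-\alpha_{n-1}$ and $-\alpha_{n-2}$ are adjacent simple roots of $\mathfrak{sl}_n$, the two Serre relations between $e_{n,n-1}$ and $e_{n-1,n-2}$ take the same structural form as the two Serre relations between $e_{n-2,n-1}$ and $e_{n-1,n}$ used in Lemma~\ref{lem1002}, and the $\mathfrak{sl}_2$-bracket $[e_{n-1,n},e_{n,n-1}]=h_{n-1}$ plays the role of $[e_{n-2,n-1},e_{n-1,n-2}]=h_{n-2}-h_{n-1}$. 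I would set up a diagram analogous to~\eqref{eq11n} whose right-pointing dotted arrows represent the unknown action of $e_{n,n-1}$, whose left-pointing arrows represent the known bijective action of $e_{n-1,n}$, and whose vertical arrows represent the known action of $e_{n-1,n-2}$ (uniquely determined by the earlier part of the proof of Proposition~\ref{prop1001}).

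Applying Corollary~\ref{cor10} to the $\mathfrak{sl}_3$-subalgebra generated by the root vectors for $\pm\alpha_{n-2},\pm\alpha_{n-1}$ (together with $h_{n-2},h_{n-1}$), and invoking the invertibility of $e_{n-1,n}$, forces the unknown matrix entries representing $e_{n,n-1}$ to commute with each operator $X_i$, exactly as in the first stage of the proof of Lemma~\ref{lem1002}. The commutator $[e_{n-1,n},e_{n,n-1}]=h_{n-1}$ then produces an identity of the shape of~\eqref{eq5}, and the Serre relation $(\operatorname{ad} e_{n,n-1})^2\,e_{n-1,n-2}=0$ produces an identity of the shape of~\eqref{eq6}. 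Eliminating one unknown between the two and factoring the result exactly as in Lemma~\ref{lem1002} yields a product of two linear factors that must vanish; assumption~\eqref{assumption} excludes the spurious root (its vanishing on any weight space would impose an integer relation among the $a_i$'s), leaving a unique valid solution. This inductively determines $e_{n,n-1}$ on the part of $N$ built from $N'$ by iterated application of $e_{n-1,n}$.

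To handle the remaining part of $N$, obtained by iterating $e_{n-1,n}^{-1}$ applied to $N'$, I would use the companion Serre relation $(\operatorname{ad} e_{n-1,n-2})^2\,e_{n,n-1}=0$ together with the invertibility of $e_{n-1,n}$, exactly as in the second half of the proof of Lemma~\ref{lem1002}: the Serre identity reduces to a single linear equation whose unique solution propagates the action of $e_{n,n-1}$ to the entire module $N$. The main obstacle is the careful bookkeeping of shifts of the operators $X_i$ through the diagram, so as to verify that the spurious root of the factored quadratic does correspond to an integer relation among the $a_i$'s forbidden by~\eqref{assumption}; beyond this, every step is a direct translation of the proof of Lemma~\ref{lem1002}.
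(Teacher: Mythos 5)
Your plan treats this lemma as a verbatim re-run of Lemma~\ref{lem1002}, but the structural feature that makes Lemma~\ref{lem1002} work is absent here: there, the unknown operator $e_{n-2,n-1}$ is already known on the base row $N'$ (it is part of the given $U(\mathfrak{sl}_{n-1})$-structure), so the relations \eqref{eq5} and \eqref{eq6} are applied at a vertex of a \emph{known} row and elimination yields an equation for the unknown arrows of the adjacent row in terms of known data, after which the induction climbs away from $N'$. The operator $e_{n,n-1}$, by contrast, is not known on any weight space at the outset: it does not lie in $\mathfrak{sl}_{n-1}$ and it is not the fixed operator $e_{n-1,n}$. Hence the relations you invoke --- the bracket $[e_{n-1,n},e_{n,n-1}]=h_{n-1}$ and the Serre relations between $e_{n,n-1}$ and $e_{n-1,n-2}$ --- only relate unknowns to unknowns: the bracket gives a recursion along each $\alpha_{n-1}$-string (one free ``seed'' per string), and your analogue of \eqref{eq6} couples the seeds of two adjacent strings, \emph{both} of which are undetermined, so the ``factor and discard the spurious root'' step fixes nothing and the induction you describe (``determining $e_{n,n-1}$ on $e_{n-1,n}^kN'$'') has no base case. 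One could try to rescue the scheme by exploiting that the Serre relation must hold at every point along a string, which overdetermines the pair of seeds, but that is a genuinely different (and unproven) argument, not a transcription of Lemma~\ref{lem1002}.

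The ingredient your proposal never uses is exactly the output of Lemma~\ref{lem1002}: the commutation $[e_{n,n-1},e_{n-2,n-1}]=0$ with the now-determined action of $e_{n-2,n-1}$. The paper's proof places the unknown $e_{n,n-1}$-arrows $x,y,u,v$ in a two-square diagram, uses this commutation in the lower square to get $y=x(b-1)b^{-1}$, uses the relation $e_{n-1,n}e_{n,n-1}-e_{n,n-1}e_{n-1,n}=e_{n-1,n-1}-e_{n,n}$ to express $u$ and $v$ through $x$ and $y$, and then the commutation in the upper square produces a \emph{linear} equation in $x$ with invertible coefficient, hence a unique solution ($x=b$). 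In particular no Serre relation, no quadratic, and no appeal to \eqref{assumption} is needed --- which is why, unlike Lemma~\ref{lem1002}, this lemma is stated without that hypothesis; your proposed use of \eqref{assumption} to exclude a spurious root is both unverified (you have not checked that the excluded factor corresponds to an integrality condition on the $a_i$'s) and foreign to the actual shape of the determining equation.
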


\begin{proof}
To determine this action of $e_{n,n-1}$ on $N$ we consider the following
picture with the same notation as in \eqref{eq11}:
\begin{displaymath}
\xymatrix{
\bullet\ar@/^/@{.>}[dd]^{u}\ar@/^/[rr]^{b+1} &&
\bullet\ar@/^/[ll]^{a+1}\ar@/^/@{.>}[dd]^{v}\\ \\
\bullet\ar@/^/[uu]^{c}\ar@/^/[rr]^{b}\ar@/^/@{.>}[dd]^{x} &&
\bullet\ar@/^/[uu]^{c}\ar@/^/[ll]^{a+1}\ar@/^/@{.>}[dd]^{y}\\ \\
\bullet\ar@/^/[uu]^{c+1}\ar@/^/[rr]^{b-1} &&
\bullet\ar@/^/[uu]^{c+1}\ar@/^/[ll]^{a+1}
}
\end{displaymath}
Here all right arrows are now  determined (in particular, by 
Lemma~\ref{lem1002}) and we
have to figure out the down arrows, representing the action of
$e_{n,n-1}$. Similarly to the arguments above we obtain that
the elements $x$, $y$, $u$ and $v$ commute with all $X_i$'s.
As the elements $e_{n,n-1}$ and $e_{n-2,n-1}$, acting in the
lower square, must commute, we obtain $y=x(b-1)b^{-1}$. From the relation
\begin{displaymath}
e_{n-1,n}e_{n,n-1}-e_{n,n-1}e_{n-1,n}=
e_{n-1,n-1}-e_{n,n}
\end{displaymath}
we obtain $u=xc^{-1}(c+1)-bc^{-1}+1$, and $v=yc^{-1}(c+1)-bc^{-1}+1+c^{-1}$.
As the elements $e_{n,n-1}$ and $e_{n-2,n-1}$ acting in the upper square
must commute, we have $bu=v(c+1)$, which gives a linear equation on
$x$ with nonzero coefficients. This equation has a unique solution
(which is easily verified to be $x=b$).
\end{proof}

We proved in fact the following:

\begin{corollary}\label{cor15}
Assume that $n>2$ and that $a_i+a_j\not\in\mathbb{Z}$ for all $i,j$.
Then the functor $\mathrm{F}$ is an equivalence. In particular,  Theorem~\ref{thm6}\eqref{thm6.1} holds.
\end{corollary}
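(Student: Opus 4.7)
The plan is to assemble the corollary from the pieces already established. By the discussion in Subsection~\ref{s2.2}, the functor $\mathrm{F}$ is exact, faithful, and full. Hence the only thing that remains is to verify density: every $M\in\hat{\mathcal{C}}_{\lambda,\xi}$ is isomorphic to $\mathrm{F}V$ for some finite-dimensional $\mathbb{C}[[x_1,\ldots,x_n]]$-module $V$. I would prove this by induction on $n$, taking Lemma~\ref{lem12} as the base case $n=2$; for $n=3$, one passes to an embedded $\mathfrak{sl}_2$-block, whose central character is non-integral because of the hypothesis $a_1+a_2\not\in\mathbb{Z}$, so Lemma~\ref{lem12} applies there as well.

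For the induction step, take $V:=M_\mu$ and let $Y_1,\ldots,Y_{n-1}$ and $Y$ be the linear operators representing the actions of $h_1,\ldots,h_{n-1}$ and of the $\mathfrak{sl}_2$-Casimir $C$ on $V$. Define $X_1,X_2$ by \eqref{X1X2} and then set $X_{i+1}:=X_i-Y_i-(a_i-a_{i+1})\mathrm{Id}_V$ for $i\geq 2$, as in the paragraph preceding Proposition~\ref{prop1001}. This equips $V$ with commuting nilpotent operators $X_1,\ldots,X_n$, hence with the structure of a finite-dimensional $\mathbb{C}[[x_1,\ldots,x_n]]$-module. Consider the restricted module $M':=U(\mathfrak{sl}_{n-1})M_\mu$, which is cuspidal over $\mathfrak{sl}_{n-1}$. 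Under the assumption \eqref{assumption}, the hypotheses for the induction hypothesis (or for Lemma~\ref{lem12} in the $n=3$ case) are met, and we conclude $M'\cong \mathrm{F}_{n-1}V$, where $\mathrm{F}_{n-1}$ denotes the analogous functor for $\mathfrak{sl}_{n-1}$ applied to $V$ viewed as a $\mathbb{C}[[x_1,\ldots,x_{n-1}]]$-module.

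The crucial step is then to invoke Proposition~\ref{prop1001}: the pair $(M',X_n)$ extends uniquely, up to isomorphism, to a cuspidal $\mathfrak{g}$-module. Both $M$ and $\mathrm{F}V$ are such extensions—$\mathrm{F}V$ by construction in Lemma~\ref{lem7}, and $M$ by definition of $X_n$—and they yield the same linear operator $X_n$ on the weight space $V=M_\mu$. By uniqueness we obtain $M\cong \mathrm{F}V$, proving density. Combined with exactness, faithfulness, and fullness, this yields the equivalence of categories.

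Theorem~\ref{thm6}\eqref{thm6.1} then follows: for a block satisfying \eqref{assumption} this is immediate from the corollary, and the general case reduces to this one via Proposition~\ref{prop3}, since we may always choose the representative weight $\mu\in\xi$ so that the coordinates $a_i$ satisfy $a_i+a_j\not\in\mathbb{Z}$. The main obstacle has already been overcome in Lemmata~\ref{lem1002} and \ref{lem1003}, whose Serre-relation computations used the assumption $a_i+a_j\not\in\mathbb{Z}$ to rule out spurious components; the remaining work in Corollary~\ref{cor15} itself is essentially bookkeeping to check that the induction base and step line up with the extension result.
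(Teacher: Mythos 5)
Your proposal is correct and is essentially the paper's own argument: exactness, faithfulness and fullness of $\mathrm{F}$ come from Subsection~\ref{s2.2}, and density is proved by the same induction on $n$ with base case Lemma~\ref{lem12} and induction step via the unique extension property of Proposition~\ref{prop1001} (resting on Lemmata~\ref{lem1002} and \ref{lem1003}), exactly as carried out in Subsection~\ref{s2.4}. One small wording caveat: in the final reduction to Theorem~\ref{thm6}\eqref{thm6.1}, Proposition~\ref{prop3} is used to replace the coset $\xi$ itself by a suitable twist $\Phi^{\mathttD}_{\mathbf{x}}$, not merely to pick a different representative $\mu$ inside a fixed $\xi$ (which would only change the $a_i$ by integers and could not achieve $a_i+a_j\not\in\mathbb{Z}$).
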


\begin{proof}[Proof of Theorem~\ref{thm6}\eqref{thm6.2}]
The category $\mathcal{C}_{\lambda,\xi}$ is defined inside
$\hat{\mathcal{C}}_{\lambda,\xi}$ by the condition that
the action of $\mathfrak{h}$ is diagonalizable. On modules in the image of
$\mathrm{F}$, the action of $h_i$, $1\leq i\leq n-1$, is given by
the linear operators
\begin{displaymath}
(a_i-a_{i+1})\mathrm{Id}_V+X_i-X_{i+1}.
\end{displaymath}
Hence such module is an object of $\mathcal{C}_{\lambda,\xi}$ if and only 
if the matrices $X_i-X_{i+1}$
are zero. This means that $D_{\lambda,\xi}$ is the quotient
of $\mathbb{C}[[x_1,x_2,\dots,x_n]]$ modulo the ideal generated by
the elements $x_i-x_{i+1}$. This proves the claim of Theorem~\ref{thm6}\eqref{thm6.2} for $n>2$. 
For $n=2$ it is known anyway.
\end{proof}

\section{The regular case as a deformation over the singular case}\label{s3}

The aim of this section is to make a first step in the proof of  Theorem~\ref{thmmain}\eqref{thmmain.3} and \eqref{thmmain.4}.
Here we will show that regular integral blocks of the category
of cuspidal modules can be considered as deformations of certain
finite dimensional algebras over singular blocks.

\subsection{The algebras $A^k$}

Thanks to Theorem \ref{thm6} we have a complete and explicit description of 
the categories of (generalized) weight modules in the Cases (I) and (II). 
In case (III), the classification theorem (Corollary \ref{cor2}) suggests 
a connection with the representation theory of highest weight modules for 
$\mathfrak{sl}_n$. Namely the modules $L(w\cdot\la)$ with 
$w\in W^{\operatorname{short}}$ are precisely the non-trivial simple 
modules in the regular integral block $\mathcal{O}_{\lambda}^\mathfrak{p}$ 
of the $\mathfrak{p}$-parabolic category  $\cO$ for $\mathfrak{sl}_n$ 
with  respect to the standard parabolic subalgebra 
with Levi factor $\mathfrak{sl}_{n-1}$. The category 
$\cO_{\lambda}^\mathfrak{p}$ is equivalent to the category of 
finite dimensional modules over the algebra $\tilde{A}^k$ for $k=n-1$, 
where $\tilde{A}^k$ is defined as the quotient of the path
algebra of the following quiver with $k+1$ vertices:
\begin{displaymath}
\xymatrix{
{0}\ar@/^/@{->}[r]^{a_0}
&\ar@/^/@{->}[l]^{b_0}
{1}\ar@/^/[r]^{a_1}&\ar@/^/[l]^{b_1}{2}
\ar@/^/[r]^{a_2}&\ar@/^/[l]^{b_2}\ar@/^/[r]^{a_3}{3}
&\cdots\ar@/^/[l]^{b_3}\ar@/^/[r]^{a_{k-1}}
&\ar@/^/[l]^{b_{k-1}}{k}
}
\end{displaymath}
modulo the relations $a_{i+1}a_i=0=b_{i}b_{i+1}$ and $b_ia_i=a_{i-1}b_{i-1}$
(whenever the expression makes sense) and the additional relation
$b_0a_0=0$. The algebra $\tilde{A}^k$ is a Koszul quasi-hereditary
algebra with a simple preserving duality (see e.g. \cite{BS2} where 
this algebra is called $K_{\Lambda}$, where $\Lambda$ is the block 
containing the finite weights with one up and $k-1$ downs). This algebra 
also describes blocks of the Temperley-Lieb algebra (\cite{Mar}, 
\cite{We}, \cite{KX2}), blocks of the category $\mathcal{O}$ for the 
Virasoro algebra (\cite{BNW}) and appears in the theory of rational 
representations for the algebraic group $\mathrm{GL}_2$ (\cite{MT}, \cite{Xi}).

Let $e_i\in \tilde{A}^k$ denote the trivial path at the vertex 
$i\in\{0,1,\dots,k\}$ (these are the primitive idempotents). 
We are interested in the subalgebra  $A^{k}=e\tilde{A}^ke$, where $e=\sum_{i=1}^ke_i$. In other words, for $k>1$ we denote by $A^{k}$ 
the quotient of the path algebra of the following  quiver  with $k$ vertices:
\begin{displaymath}
\xymatrix{
{1}\ar@/^/[r]^{a_1}&\ar@/^/[l]^{b_1}{2}
\ar@/^/[r]^{a_2}&\ar@/^/[l]^{b_2}\ar@/^/[r]^{a_3}{3}
&\cdots\ar@/^/[l]^{b_3}\ar@/^/[r]^{a_{k-1}}
&\ar@/^/[l]^{b_{k-1}}{k}
}
\end{displaymath}
modulo the relations $a_{i+1}a_i=0=b_{i}b_{i+1}$ and $b_ia_i=a_{i-1}b_{i-1}$
(whenever the expression makes sense) in  case $k>2$ and
$a_1b_1a_1=0=b_1a_1b_1$ in case $k=2$. Set $A^1=\mathbb{C}[x]/(x^2)$.

Note that the algebra $A^k$ is self-injective and even symmetric.
As a centralizer subalgebra of a quasi-hereditary algebra with duality,
it is cellular (\cite[Proposition~4.3]{KX1}, see also \cite{BS1} for an
explicit graded cellular basis). The algebra  $A^{k}$ belongs 
to the family of symmetric cellular algebras called generalized Khovanov 
algebras in \cite{BS1}. A realization as a convolution algebra using 
Springer fibres can be found in \cite{SW}. Replacing the double arrows
in the above quiver by simple edges, one obtains a tree and hence $A^{k}$
can be realized as the corresponding Brauer tree algebra (with $k-1$ edges
and no exceptional vertex in the classification of \cite[4.2]{Rickard}).

Let now $k\geq 2$ be fixed. We will need a few basic properties of $A^k$ 
which we recall now: For $1\leq i\leq k$ let $P_{i}=A^ke_i$ denote 
the indecomposable projective module corresponding to the $i$-th vertex. 
Then for $1\leq i,j\leq k$ the following holds:
\begin{displaymath}
\dim\mathrm{Hom}_{A^{k}}(P_{i},P_{j})=
\begin{cases}
2, & \text{if $i=j$};\\
1, & \text{if $i=j\pm 1$};\\
0, & \text{ otherwise}.
\end{cases}
\end{displaymath}
In particular, for $k>1$ the endomorphism algebra of the projective
generator $P=P_{1}\oplus P_{2}\oplus\cdots \oplus
P_{k}$ of $A^{k}\text{-}\mathrm{mod}$, which is naturally
identified with $A^{k}\cong (A^{k})^{\mathrm{op}}$, is generated
by unique (up to nonzero scalars) homomorphisms from
$P_{i}$ to $P_{i\pm 1}$
(whenever this makes sense). As a consequence, for $k>1$ the 
modules $P_{1}$ and $P_{k}$ have length three while all other 
indecomposable projectives have length four; all 
indecomposable projectives have Loewy length three.
Moreover the following is a basis of $A^{k}$:
\begin{displaymath}
\mathbf{B}_k=\{e_j,a_i,b_i,a_ib_i,b_1a_1\}
\end{displaymath}
(where $1\leq i\leq k-1$, $1\leq j\leq k$). In particular, $\dim A^{k}=4k-2$.

\subsection{Deformations}\label{s3.1}

Let $A$ be a finite dimensional algebra. Given a finite dimensional
vector space $U$ we denote by $\mathbb{C}_U=\mathbb{C}[[U]]$ the algebra
of $\mathbb{C}$-valued  formal functions on $U$. If $u_1,u_2,\dots,u_m$
is a basis of $U$, then $\mathbb{C}_U$ is naturally identified with the
algebra of formal power series $\mathbb{C}[[u_1,u_2,\dots,u_m]]$.
This algebra is local and we denote by $\mathfrak{m}$ its maximal ideal.
A {\it flat deformation} (or just a {\it deformation}) of $A$ over
$\mathbb{C}_U$ is a $\mathbb{C}_U$-algebra $A_U$ which is topologically
free as $\mathbb{C}_U$-module (i.e. isomorphic to some $V[[U]]$ for
some  vector space $V$) together with an isomorphism
$\varphi:A_U/\mathfrak{m}A_U\rightarrow A$ of algebras.
In particular $A_U\cong A[[U]]$ as a $\mathbb{C}_U$-module.
If $U$ has dimension $m$ then $A_U$ is an {\it $m$-parameter deformation}
of $A$. Two deformations $A_U$ and $A'_U$ are isomorphic if there is a $\mathbb{C}[[U]]$-isomorphism of algebras which is the identity
modulo $\mathfrak{m}$. A deformation $A_U$ is {\it trivial} if it is
isomorphic to $A[[U]]$ with the ordinary multiplication of formal
power series. Similarly one defines $k$-order deformations by replacing
$\mathbb{C}_U$ with $\mathbb{C}_U/\mathfrak{m}^{k+1}$. First order
deformations are also called {\it infinitesimal} deformations.

\subsection{Case \eqref{case3} as a deformation of $A^{n-1}$
over case \eqref{case2}}\label{s3.3}

The main result in this subsection is the following claim:

\begin{theorem}\label{thm21}
Let $\lambda$ be as in case \eqref{case3} and $\xi$ be such that
$\hat{\mathcal{C}}_{\lambda,\xi}$ is nonzero. Then
$\hat{D}_{\lambda,\xi}$ is a deformation of
$A^{n-1}$ over $\mathbb{C}[[x_1,x_2,\dots,x_n]]$; and
${D}_{\lambda,\xi}$ is a deformation of
$A^{n-1}$ over $\mathbb{C}[[x]]$.
\end{theorem}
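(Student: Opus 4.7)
The plan is to realize both $\hat{D}_{\lambda,\xi}$ and $D_{\lambda,\xi}$ via translation functors connecting the regular integral block to the singular blocks described by Theorem~\ref{thm6}. For each $i\in\{1,\ldots,n-1\}$ pick a singular weight $\lambda^{(i)}$ in case \eqref{case2} with stabilizer $\langle s_i\rangle$ such that $\lambda-\lambda^{(i)}$ is integral, and let $\theta_i^{\mathrm{on}}:\hat{\mathcal{C}}_{\lambda,\xi}\to \hat{\mathcal{C}}_{\lambda^{(i)},\xi}$ and $\theta_i^{\mathrm{out}}:\hat{\mathcal{C}}_{\lambda^{(i)},\xi}\to \hat{\mathcal{C}}_{\lambda,\xi}$ be the translation projective functors (which preserve $\xi$ by the usual weight-bookkeeping). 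These are exact and biadjoint up to a twist, and by Theorem~\ref{thm6} the singular target is the category of finite dimensional $\mathbb{C}[[x_1,\ldots,x_n]]$-modules, whose unique indecomposable projective pro-object I denote by $P^{(i)}$.

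First I would show that $Q_i:=\theta_i^{\mathrm{out}}P^{(i)}$ is an indecomposable projective in $\hat{\mathcal{C}}_{\lambda,\xi}$ and that $\{Q_1,\ldots,Q_{n-1}\}$ is a complete set, using Corollary~\ref{cor2}\eqref{cor2.3} (which gives $n-1$ simples indexed by $W^{\operatorname{short}}$) together with the standard fact that each $\theta_i^{\mathrm{on}}$ kills exactly one simple in the regular block while sending the remaining ones either to the singular simple or to zero; the bijection with $W^{\operatorname{short}}$ matches the walls $s_i$ naturally. Thus
\begin{displaymath}
\hat{D}_{\lambda,\xi}\cong \operatorname{End}\Bigl(\bigoplus_{i=1}^{n-1}Q_i\Bigr)^{\mathrm{op}},
\end{displaymath}
and each $\operatorname{Hom}(Q_i,Q_j)$ can be computed by adjunction as $\operatorname{Hom}(\theta_j^{\mathrm{on}}\theta_i^{\mathrm{out}}P^{(i)},P^{(j)})$. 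The composition $\theta_j^{\mathrm{on}}\theta_i^{\mathrm{out}}$ is zero for $|i-j|>1$, is a wall-crossing functor (hence a one-parameter family over $\mathbb{C}[[x_1,\ldots,x_n]]$) for $|i-j|=1$, and for $i=j$ fits into the standard short exact sequence $0\to \mathrm{Id}\to \theta_i^{\mathrm{on}}\theta_i^{\mathrm{out}}\to \mathrm{Id}\to 0$ in the singular category, as in \cite[5.9]{Ja}. This yields Hom-space ranks $0,1,1,2$ matching those of $A^{n-1}$ in the dimensions displayed in the paragraph preceding \ref{s3.1}, and exhibits each $\operatorname{Hom}(Q_i,Q_j)$ as a free $\mathbb{C}[[x_1,\ldots,x_n]]$-module.

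The natural transformation $\operatorname{End}(\mathrm{Id}_{\hat{\mathcal{C}}_{\lambda^{(i)},\xi}})\to \operatorname{End}(\theta_i^{\mathrm{out}}(-))$ induces a central $\mathbb{C}[[x_1,\ldots,x_n]]$-algebra structure on $\hat{D}_{\lambda,\xi}$; putting the Hom-space computations together gives flatness (topological freeness of rank $4(n-1)-2$) over this base. Reducing modulo the maximal ideal $\mathfrak{m}$ kills the wall-crossing parameters and collapses the Hom-spaces to the standard basis $\mathbf{B}_{n-1}=\{e_j,a_i,b_i,a_ib_i,b_1a_1\}$; the relations $a_{i+1}a_i=0$, $b_ib_{i+1}=0$ and $b_ia_i=a_{i-1}b_{i-1}$ then drop out of the analogous relations between compositions of $\theta_i^{\mathrm{on}}$ and $\theta_j^{\mathrm{out}}$ (the first two from the vanishing of compositions of functors across non-adjacent walls after composing with $\theta_{i+1}^{\mathrm{on}}$ or $\theta_{i-1}^{\mathrm{on}}$; the commutation from the standard isomorphism of wall-crossing compositions), identifying $\hat{D}_{\lambda,\xi}/\mathfrak{m}\hat{D}_{\lambda,\xi}\cong A^{n-1}$. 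The statement for $D_{\lambda,\xi}$ is then obtained by passing to the quotient that enforces diagonalizability of $\mathfrak{h}$: exactly as in the proof of Theorem~\ref{thm6}\eqref{thm6.2}, this imposes $x_i=x_{i+1}$ and leaves a one-parameter deformation of $A^{n-1}$ over $\mathbb{C}[[x]]$.

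The main obstacle will be the algebra identification $\hat{D}_{\lambda,\xi}/\mathfrak{m}\hat{D}_{\lambda,\xi}\cong A^{n-1}$: checking the multiplicative structure (i.e.\ that all compositions among the generators $a_i,b_i$ satisfy exactly the listed relations, with no hidden corrections) requires careful matching of the standard exact sequences for translation through a wall with the Loewy structure of $P^{(i)}$, and ruling out any extra scalars that could prevent the quotient from being the quasi-hereditary (actually symmetric) algebra $A^{n-1}$ rather than a proper deformation of it. Once this fiber identification is in place, the flatness statement — and hence the deformation interpretation required by Theorem~\ref{thm21} — is essentially formal from the biadjointness and freeness properties of translation functors.
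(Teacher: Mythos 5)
Your proposal takes essentially the same route as the paper: Subsection~\ref{s3.3} likewise realizes $\hat{D}_{\lambda,\xi}$ as the endomorphism algebra of the sum of $\theta_i^{\mathrm{out}}$ applied to the singular (pro-)projectives, computes the Hom-spaces by adjunction using $\theta_i^{\mathrm{on}}\theta_i^{\mathrm{out}}\cong\mathrm{Id}\oplus\mathrm{Id}$, the equivalences $\theta_{i+1}^{\mathrm{on}}\theta_i^{\mathrm{out}}$ and the vanishing for $|i-j|>1$, establishes a central free $\mathbb{C}[[x_1,\dots,x_n]]$-structure of rank $4n-6$ (Proposition~\ref{freeness}, Lemma~\ref{Rcentral}), identifies the fiber with $A^{n-1}$, and gets the $\mathbb{C}[[x]]$ statement by restricting to weight modules. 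The issues you flag are precisely where the paper invests its work: the Hom-vanishing and the relations rest on the composition-factor analysis of $\theta_i^{\mathrm{out}}L_i$ (simple top and socle $\hat{L}_i$, length three or four, factors only $\hat{L}_i,\hat{L}_{i\pm 1}$, obtained by a character comparison), and the possible ``hidden scalars'' are handled by showing $\beta_i\alpha_i=c_i\alpha_i\beta_i$ with $c_i\neq 0$ and rescaling the generators.
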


In case $n=2$, the algebras $\hat{D}_{\lambda,\xi}$ and ${D}_{\lambda,\xi}$ 
are well-known (see e.g. \cite{Dr} or \cite[Chapter~3]{Maz2})
and the result is straightforward. Hence in what follows we assume $n>2$. 
Fix some $\mathbf{x}$ and $\mathttD$ as in
Theorem~\ref{thm1}. Recall the set $W^{\operatorname{short}}$ indexing the simple objects and set
\begin{displaymath}
\hat{L}_i:=\big( U_{S(\mathttD)}\otimes_U 
L((s_1s_{2}\cdots s_{i-1}s_{i})\cdot 
\lambda)\big)^{\Phi^{\mathttD}_{\mathbf{x}}},\quad
1\leq i\leq n-1.
\end{displaymath}
Fix some integral dominant singular $\lambda_i\in\mathfrak{h}^*$ with stabilizer $\langle s_i\rangle$ for the dot-action of $W$.
Let $\chi_\la$, $\chi_{\la_i}$ be the (under the Harish-Chandra isomorphism) corresponding central characters and denote by
$\mathcal{M}_{\lambda}$ and $\mathcal{M}_{\lambda_i}$ the full subcategories
of $\mathfrak{g}\text{-}\mathrm{mod}$ consisting of modules on which
$\mathrm{Ker}(\chi_\la)$ and $\mathrm{Ker}(\chi_{\la_i})$ act locally 
nilpotently, respectively. Let
\begin{displaymath}
\theta_i^{\mathrm{on}}:\quad\mathcal{M}_{\lambda}\to
\mathcal{M}_{\lambda_i} \quad\text{ and }\quad
\theta_i^{\mathrm{out}}:\quad\mathcal{M}_{\lambda_i}\to
\mathcal{M}_{\lambda}
\end{displaymath}
denote the corresponding projective functors
{\em translation on the wall} and {\em translation out of the wall},
respectively (see \cite{BG}, \cite{St} for details). These functors
are both left and right adjoint to each other and preserve the category of cuspidal modules as well as the category of weight modules. Recall (\cite[4.12]{Ja}, \cite[Lemma 5.3, Lemma A1]{St}) that 
$\theta_i^{\mathrm{on}}L \big((s_1s_{2}\cdots s_{j-1}s_{j})\cdot 
\lambda\big)$ is zero if $i\neq j$ and is a simple module if $i=j$.
From this it follows by standard arguments 
that $\theta_i^{\mathrm{on}}\hat{L}_j$
is zero if $i\neq j$ and is a simple module if $i=j$. 
We denote the latter module by $L_i$ (this is the simple object of $\hat{\mathcal{C}}_{\lambda_i,\xi_i}$).

Set $\xi_i=\mathrm{supp}(L_i)$. Then our functors restrict to 
biadjoint functors as follows:
\begin{displaymath}
\xymatrix{
\hat{\mathcal{C}}_{\lambda,\xi}
\ar@/^/[rr]^{\theta_i^{\mathrm{on}}}
&&\hat{\mathcal{C}}_{\lambda_i,\xi_i}
\ar@/^/[ll]^{\theta_i^{\mathrm{out}}}
},\quad
\xymatrix{
\mathcal{C}_{\lambda,\xi}
\ar@/^/[rr]^{\theta_i^{\mathrm{on}}}
&&\mathcal{C}_{\lambda_i,\xi_i}
\ar@/^/[ll]^{\theta_i^{\mathrm{out}}}
}.
\end{displaymath}
Furthermore, for any $i,i+1$ the composition
$\theta_{i+1}^{\mathrm{on}}\theta_{i}^{\mathrm{out}}$ is an equivalence with
inverse $\theta_{i}^{\mathrm{on}}\theta_{i+1}^{\mathrm{out}}$
(this follows from \cite[5.9]{Ja}, see \cite[(6.1),(6.2)]{St}).
By \cite{BG} for every $i$ we also have isomorphisms of functors
\begin{equation}\label{eq19}
\theta_i^{\mathrm{on}}\theta_i^{\mathrm{out}}\cong
\mathrm{Id}_{\hat{\mathcal{C}}_{\lambda_i,\xi_i}}\oplus
\mathrm{Id}_{\hat{\mathcal{C}}_{\lambda_i,\xi_i}}.
\end{equation}
From now on we fix such an isomorphism and let $p_1$ and $p_2$ denote 
the projection onto the first and second summands, respectively.

For $m\in\mathbb{N}$ consider the categories
$\hat{\mathcal{C}}_{\lambda_i,\xi_i}^m$ and
$\mathcal{C}_{\lambda_i,\xi_i}^m$
as in Subsection~\ref{s1.5}.
By Theorem~\ref{thm6}, the categories
$\hat{\mathcal{C}}_{\lambda_i,\xi_i}$ and
$\mathcal{C}_{\lambda_i,\xi_i}$ are equivalent to the categories
of finite dimensional modules over some local ring. In either case let
$\mathfrak{m}$ denote the maximal ideal.

\begin{lemma}\label{lem22}
The categories $\hat{\mathcal{C}}_{\lambda_i,\xi_i}^m$ and
$\mathcal{C}_{\lambda_i,\xi_i}^m$ coincide with the full subcategories
of $\hat{\mathcal{C}}_{\lambda_i,\xi_i}$ and $\mathcal{C}_{\lambda_i,\xi_i}$,
respectively, consisting of modules annihilated by $\mathfrak{m}^m$.
\end{lemma}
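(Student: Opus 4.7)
The plan is to transfer the statement through the equivalences of categories established in Theorem~\ref{thm6} and then verify it as a standard fact about finite dimensional modules over a complete local ring.

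First I would invoke Theorem~\ref{thm6} to identify $\hat{\mathcal{C}}_{\lambda_i,\xi_i}$ with the category of finite dimensional $\mathbb{C}[[x_1,\dots,x_n]]$-modules, and $\mathcal{C}_{\lambda_i,\xi_i}$ with the category of finite dimensional $\mathbb{C}[[x]]$-modules. Both conditions we want to compare are intrinsic: Loewy length is defined purely in terms of the abelian category structure, while annihilation by $\mathfrak{m}^m$ is preserved under any equivalence of categories of modules over the local rings. Hence proving the lemma amounts to proving the analogous statement for finite dimensional modules over the local ring $R=\mathbb{C}[[x_1,\dots,x_n]]$ (or $\mathbb{C}[[x]]$) with maximal ideal $\mathfrak{m}$.

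Second I would argue that for a finite dimensional $R$-module $M$, the Loewy filtration coincides with the $\mathfrak{m}$-adic filtration. Indeed, $M$ has finite length and its only simple subquotient is the residue field $\mathbb{C}=R/\mathfrak{m}$, on which $\mathfrak{m}$ acts by zero. Therefore $\mathfrak{m}$ is contained in the Jacobson radical of the (finite dimensional) image of $R$ in $\operatorname{End}_\mathbb{C}(M)$; by maximality of $\mathfrak{m}$ these coincide. It follows that the radical of $M$ equals $\mathfrak{m}M$ and, inductively, $\operatorname{rad}^k(M)=\mathfrak{m}^kM$. Thus $M$ has Loewy length at most $m$ if and only if $\mathfrak{m}^mM=0$, which is exactly the statement we need.

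There is no serious obstacle here; the only point requiring attention is to be explicit that the equivalences of Theorem~\ref{thm6} match the Loewy filtration on one side with the $\mathfrak{m}$-adic filtration on the other, and that the subcategories cut out by either condition are automatically extension-closed (ensuring that both subcategories defined in the lemma really are full abelian subcategories to begin with).
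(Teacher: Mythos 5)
Your argument is correct and is exactly the content the paper leaves implicit (its proof is just ``this follows directly from the definitions''): via Theorem~\ref{thm6} one identifies the block with finite dimensional modules over the local ring, and for a finite dimensional module $M$ over $\mathbb{C}[[x_1,\dots,x_n]]$ or $\mathbb{C}[[x]]$ the only simple subquotient is $R/\mathfrak{m}$, so $\operatorname{rad}^k(M)=\mathfrak{m}^kM$ and Loewy length at most $m$ is equivalent to $\mathfrak{m}^mM=0$. One small correction to your closing remark: these subcategories are \emph{not} extension-closed (for instance $R/\mathfrak{m}^{2m}$ is an extension of two modules killed by $\mathfrak{m}^m$ but is not itself killed by $\mathfrak{m}^m$); what makes them full abelian subcategories is closure under subquotients, i.e.\ under kernels and cokernels, and nothing in the lemma needs extension-closure.
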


\begin{proof}
This follows directly from the definitions.
\end{proof}

Denote by ${\hat{\mathcal{C}}}_{\lambda,\xi}^{(m)}$ 
and ${{\mathcal{C}}}_{\lambda,\xi}^{(m)}$
the full subcategories 
of $\hat{\mathcal{C}}_{\lambda,\xi}$ and
${\mathcal{C}}_{\lambda,\xi}$ which for every $i$ are mapped 
to  $\hat{\mathcal{C}}^m_{\lambda_i,\xi_i}$ and
${\mathcal{C}}^m_{\lambda_i,\xi_i}$ by the corresponding
translation to the wall, respectively. 
The category ${\hat{\mathcal{C}}}_{\lambda,\xi}^{(m)}$
is abelian and the translation 
functors restrict, thanks to \eqref{eq19}, to (biadjoint, exact) 
functors between ${\hat{\mathcal{C}}}_{\lambda,\xi}^{(m)}$ and $\hat{\mathcal{C}}^m_{\lambda_i,\xi_i}$.  These categories then have enough projectives, and $\theta_i^{\mathrm{out}}$ (as left adjoint to an exact functor) maps
projective objects to projective objects. In particular, if we denote
by $\hat{R}(m)$ the indecomposable projective module
in $\hat{\mathcal{C}}_{\lambda_1,\xi_1}^m$, and set
\begin{eqnarray*}
\hat{P}_i(m):=\mathrm{F}_i \hat R(m), &\text{where}&\mathrm{F}_i=\theta_i^{\mathrm{out}}\theta_{i}^{\mathrm{on}}
\theta_{i-1}^{\mathrm{out}}\cdots
\theta_{2}^{\mathrm{out}}\theta_{2}^{\mathrm{on}}
\theta_{1}^{\mathrm{out}},
\end{eqnarray*}
then $\hat{P}(m):= \bigoplus_{i=1}^{n-1}\hat{P}_i(m)$ is a (minimal) 
projective generator of ${\hat{\mathcal{C}}}_{\lambda,\xi}^{(m)}$. 
Let $\hat{E}^{(m)}$ be its endomorphism ring and $\hat{E}_i^{(m)}$ 
the endomorphism ring of  $\hat{P}_i(m)$. By Section \ref{s1.5} we have $\hat{R}:=\hat{D}_{\la_1,\xi_1}=\varprojlim 
\operatorname{End}_{\mathfrak{g}}(\hat{R}(m))$ 
and $\hat{D}_{\la,\xi}=\varprojlim \hat{E}^{(m)}$. Let 
$\hat{P}_i(\infty)=\varprojlim \hat{P}_i(m)$, 
$\hat{P}(\infty)=\varprojlim \hat{P}(m)$ and
$\hat{R}^{(m)}=\operatorname{End}_{\mathfrak{g}}(\hat{R}(m))$. Define algebra
homomorphisms
\begin{eqnarray}\label{freeness1}
\Psi_i:\quad \hat{R}\to
 \varprojlim \hat{E}_i^{(m)},&&f\mapsto F_i(f).
\end{eqnarray}
and $\Psi=\bigoplus_{i=1}^n\Psi_i:\hat{R}\to \hat{D}_{\la,\xi}$.

For $1\leq i\leq n-1$ let $\alpha_{i}$ be the adjunction morphism 
from the identity functor to the composition $\theta_{i+1}^{\rm{out}}\theta_{i+1}^{\rm{on}}$ and $\beta_{i}$ 
the adjunction morphism in the opposite direction.

\begin{proposition}\label{freeness}
\begin{enumerate}[(i)]
\item\label{freeness.1} For $1\leq i\leq n-1$ the map $\Psi_i$ is an inclusion.
It turns $\varprojlim E_i^{(m)}$ into an $\hat{R}$-module which is free of 
rank $2$ with basis $\operatorname{id}=\operatorname{id}_i$, and the 
compositions $\alpha_{i-1}\beta_{i-1}$ of adjunction morphisms for $i>1$ 
and  $\beta_{1}\alpha_{1}$ otherwise.
\item\label{freeness.2} The map $\Psi$ is an inclusion. It turns 
$\hat{D}_{\la,\xi}$ into a 
free left and right $\hat{R}$-module of rank $4n-2$. A basis of this module is 
given by the elements $\operatorname{id}_j$, $\alpha_i$, $\beta_i$, $\alpha_i\beta_i$, $\beta_1\alpha_1$, for $1\leq i\leq n-2$ and 
$1\leq j\leq n-1$.
\end{enumerate}
\end{proposition}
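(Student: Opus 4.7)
The plan is to compute all Hom-spaces $\mathrm{Hom}(\hat{P}_i(m),\hat{P}_j(m))$ at each truncation level $m$ via biadjointness of the translation functors, identify explicit basis elements in terms of the unit and counit morphisms $\alpha_k,\beta_k$, and pass to the inverse limit in $m$. The key structural observation is that $\hat{P}_i(m)=\theta_i^{\mathrm{out}}N_i(m)$, where $N_i(m):=\theta_i^{\mathrm{on}}\theta_{i-1}^{\mathrm{out}}\cdots\theta_2^{\mathrm{out}}\theta_2^{\mathrm{on}}\theta_1^{\mathrm{out}}\hat{R}(m)$ is the indecomposable projective in $\hat{\mathcal{C}}_{\lambda_i,\xi_i}^m$; the chain of equivalences $\theta_{k+1}^{\mathrm{on}}\theta_k^{\mathrm{out}}$ canonically identifies $\mathrm{End}(N_i(m))$ with $\hat{R}^{(m)}:=\mathrm{End}(\hat{R}(m))$.

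For part~\eqref{freeness.1}, biadjunction together with \eqref{eq19} yields
\[
\mathrm{End}(\hat{P}_i(m))\;\cong\;\mathrm{Hom}\bigl(N_i(m),\,\theta_i^{\mathrm{on}}\theta_i^{\mathrm{out}}N_i(m)\bigr)\;\cong\;\mathrm{End}(N_i(m))^{\oplus 2},
\]
a free $\hat{R}^{(m)}$-module of rank $2$. Since $\Psi_i$ is unital, it embeds $\hat{R}^{(m)}$ as $\hat{R}^{(m)}\cdot\mathrm{id}_i$ inside this sum, giving injectivity. The second basis element must generate the complementary summand in \eqref{eq19}: I claim that the composition $\alpha_{i-1}\beta_{i-1}$ (for $i>1$), respectively $\beta_1\alpha_1$ (for $i=1$), does exactly this when evaluated on $\hat{P}_i(m)$. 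Verifying this amounts to tracing the unit--counit calculus through the adjunction isomorphism and the splitting \eqref{eq19}, which is essentially formal once the splitting is pinned down.

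For part~\eqref{freeness.2}, the same biadjunction gives $\mathrm{Hom}(\hat{P}_i(m),\hat{P}_j(m))\cong\mathrm{Hom}(N_i(m),\theta_i^{\mathrm{on}}\theta_j^{\mathrm{out}}N_j(m))$. For $|i-j|=1$, the composite $\theta_i^{\mathrm{on}}\theta_j^{\mathrm{out}}$ is an equivalence of singular blocks sending $N_j(m)$ to $N_i(m)$, so the Hom-space is free of rank $1$ over $\hat{R}^{(m)}$, and one identifies the generator with (the evaluation of) $\alpha_i$ or $\beta_i$ on $\hat{P}_i(m)$. The main obstacle is the vanishing $\theta_i^{\mathrm{on}}\theta_j^{\mathrm{out}}N_j(m)=0$ for $|i-j|>1$; my strategy is to show that the composition factors of $\theta_j^{\mathrm{out}}N_j(m)$ all lie in $\{\hat{L}_{j-1},\hat{L}_j,\hat{L}_{j+1}\}$ (using the $|j\pm 1-j|=1$ equivalences together with Hom pairings against the simples to control which $\hat{L}_k$ can appear) and to conclude via $\theta_i^{\mathrm{on}}\hat{L}_k=0$ for $k\neq i$. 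Summing ranks over all pairs $(i,j)$ then matches $\dim A^{n-1}$, and the listed elements $\mathrm{id}_j,\alpha_i,\beta_i,\alpha_i\beta_i,\beta_1\alpha_1$ form the claimed $\hat{R}^{(m)}$-basis. The freeness is inherited by $\hat{D}_{\lambda,\xi}=\varprojlim\hat{E}^{(m)}$ as both a left and a right $\hat{R}$-module, because the bases at each level $m$ are compatible with the transition maps $\hat{E}^{(m+1)}\twoheadrightarrow\hat{E}^{(m)}$.
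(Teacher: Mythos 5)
Your overall strategy coincides with the paper's: reduce to $i=1$ (or to a single wall) via the equivalences $\theta_{k+1}^{\mathrm{on}}\theta_k^{\mathrm{out}}$, compute $\mathrm{End}(\hat{P}_i)$ and $\mathrm{Hom}(\hat{P}_i,\hat{P}_{i\pm1})$ by biadjunction and the splitting \eqref{eq19}, identify the generators with $\alpha_i,\beta_i$, and kill $\mathrm{Hom}(\hat{P}_i,\hat{P}_j)$ for $|i-j|>1$ by a composition-factor argument. However, there is a genuine gap at precisely the step that carries the real content, namely the vanishing \eqref{nohom}. You propose to show that all composition factors of $\theta_j^{\mathrm{out}}N_j(m)$ (equivalently, by exactness and Theorem~\ref{thm6}, of $\theta_j^{\mathrm{out}}L_j$) lie in $\{\hat{L}_{j-1},\hat{L}_j,\hat{L}_{j+1}\}$ ``using the adjacent-wall equivalences together with Hom pairings against the simples.'' These tools cannot deliver that statement: adjunction computations of the form $\mathrm{Hom}(\theta_j^{\mathrm{out}}L_j,\hat{L}_k)=\mathrm{Hom}(L_j,\theta_j^{\mathrm{on}}\hat{L}_k)$ only control the top and socle of $\theta_j^{\mathrm{out}}L_j$, and the multiplicity of $\hat{L}_k$ for $|k-j|\geq 2$ equals the length of $\theta_k^{\mathrm{on}}\theta_j^{\mathrm{out}}L_j$, whose vanishing is exactly what has to be proved; it does not follow formally from properties of projective functors (for general simples on a wall such compositions are nonzero). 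The paper closes this gap with a non-formal input: it compares the character of $\theta_j^{\mathrm{out}}L_j$ with its $\Phi^{R}_{\mathbf{x}}$-twisted character to pin down its total length (three for $j=1,n-1$, four otherwise), which forces the composition factors to be only $\hat{L}_j$ (twice) and $\hat{L}_{j\pm1}$; the vanishing then follows since $\hat{P}_i(\infty)$ is a limit of projective covers of $\hat{L}_i$. Without such a length or character bound your argument does not go through.

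Two smaller points: in part \eqref{freeness.1} the identification of $\beta_1\alpha_1$ (resp.\ $\alpha_{i-1}\beta_{i-1}$) as a second basis vector is not purely formal; one must check, as the paper does by a direct calculation, that it is not contained in $\hat{R}\cdot\mathrm{id}$ modulo the splitting, i.e.\ that together with $\mathrm{id}$ it generates both summands of \eqref{eq19}. And in part \eqref{freeness.2} you assert freeness as a left \emph{and} right $\hat{R}$-module with the same basis without justification; this requires knowing that the image of $\Psi$ is central in $\hat{D}_{\la,\xi}$ (the paper's Lemma~\ref{Rcentral}, proved via naturality of the adjunction morphisms), which your argument should include or cite.
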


\begin{proof}
Since $\mathrm{F}_i$ is the composition of 
$\theta_{i}^{\mathrm{out}}$ and  equivalences, it is enough to prove 
the first statement for $i=1$. For all other $i$ the proof is similar.
To show injectivity we assume $\Psi_1(f)=\Psi_1(f')$, that is 
$\mathrm{F}_1(f)=\mathrm{F}_1(f')$. Then $\theta_{1}^{\mathrm{on}}\theta_{1}^{\mathrm{out}}(f)=
\theta_{1}^{\mathrm{on}}\theta_{1}^{\mathrm{out}}(f')$, 
hence $f=f'$ by \eqref{eq19}. This implies that $\Psi_1$ is injective. 

The space  $\varprojlim E_1^{(m)}$ becomes an $\hat{R}$-module by 
setting $f.g=\theta_{1}^{\mathrm{out}}(f)\circ g$ for 
$g\in \varprojlim E_i^{(m)}$, $f\in \hat{R}$. Moreover, $\Psi_1$ becomes an 
$\hat{R}$-module morphism. We claim that the map 
$g\mapsto p_1\theta_{1}^{\mathrm{on}}(g)$ defines a split $S$ 
of $\Psi_1$. Since
\begin{eqnarray*}
S(f.g)&=&p_1\theta_{1}^{\mathrm{on}}(\theta_{1}^{\mathrm{out}}(f)\circ g)\\
&=&p_1(\theta_{1}^{\mathrm{on}}
\theta_{1}^{\mathrm{out}}(f)\circ\theta_{1}^{\mathrm{on}}(g))\\
&=&p_1((f\oplus f)\circ\theta_{1}^{\mathrm{on}} g)\\&=&f\circ p_1\theta_{1}^{\mathrm{on}}(g),
\end{eqnarray*}
this map is an $\hat{R}$-module homomorphism and obviously 
$S\circ\Psi_1$ is the identity on $\hat{R}$. It is now easy to verify 
that the map $g\mapsto p_2\theta_{1}^{\mathrm{on}}(g)$ defines a 
complement and  so $\varprojlim E_1^{(m)}\cong \hat{R}\oplus \hat{R}$. 
By direct calculations one verifies that the composition $\beta_1\alpha_1$ 
can be chosen as a second basis vector. Claim
\eqref{freeness.1} follows. 

Using again adjunctions and the fact that $\theta_{i+1}^{\mathrm{on}}\theta_{i}^{\mathrm{out}}$ is an equivalence we obtain from Theorem \ref{thm6} the following:
\begin{eqnarray*}
\HOM(\hat{P}_i(\infty),\hat{P}_{i+1}(\infty))&=&
\HOM(\hat{P}_i(\infty),\theta_{i+1}^{\mathrm{out}}
\theta_{i+1}^{\mathrm{on}}\hat{P}_{i}(\infty))\\
&=&
\HOM(\theta_{i+1}^{\mathrm{on}}\hat{P}_i(\infty),
\theta_{i+1}^{\mathrm{on}}\hat{P}_{i}(\infty))\\&\cong& \hat{R}.
\end{eqnarray*}
Hence, under the identification 
$\theta_{i+1}^{\mathrm{out}}\theta_{i+1}^{\mathrm{on}}
\hat{P}_i(\infty)=\hat{P}_{i+1}(\infty)$ the map
\begin{displaymath}
\hat{R}\rightarrow\HOM_{\mathfrak{g}}(\hat{P}_i(\infty),\hat{P}_{i+1}(\infty)),
\quad f\mapsto \alpha_i\circ f
\end{displaymath}
defines an isomorphism of $\hat{R}$-modules, where 
$f.g=\theta_{i+1}^{\mathrm{out}}\theta_{i+1}^{\mathrm{on}}(\Psi_i(f))\circ g$ 
for $f\in \hat{R}$, 
$g\in\HOM_{\mathfrak{g}}(\hat{P}_i(\infty),\hat{P}_{i+1}(\infty))$, since the 
natural transformation $\alpha_i$ satisfies
\begin{displaymath}
\theta_{i+1}^{\mathrm{out}}\theta_{i+1}^{\mathrm{on}}(\Psi_i(f))
\circ \alpha_i\circ g=\alpha_i\circ \Psi_i(f)\circ g=\alpha_i\circ (f.g).
\end{displaymath}
Therefore, $\HOM_{\mathfrak{g}}(\hat{P}_i(\infty),\hat{P}_{i+1}(\infty))$ 
is a free right $\hat{R}$-module of rank one with basis $\alpha_i$. 
Similarly, $\HOM_{\mathfrak{g}}(\hat{P}_{i+1}(\infty),\hat{P}_{i}(\infty))$ 
is a free left $\hat{R}$-module of rank one with basis $\beta_i$.
Finally we claim that
\begin{equation}\label{nohom}
\HOM_{\mathfrak{g}}(\hat{P}_i(\infty),\hat{P}_{j}(\infty))=
0\quad \text{if $|i-j|>1$}.
\end{equation}
By the standard properties of translation out of the wall,
the module  $\theta_i^{\mathrm{out}}L_i$ has simple top and simple socle, 
both isomorphic to $\hat{L}_i$. By adjunction,
\begin{displaymath}
\mathrm{Hom}_{\mathfrak{g}}(\theta_i^{\mathrm{out}}L_i,
\theta_{i\pm 1}^{\mathrm{out}}L_{i\pm 1})=
\mathrm{Hom}_{\mathfrak{g}}(L_i,
\theta_i^{\mathrm{on}}\theta_{i\pm 1}^{\mathrm{out}}L_{i\pm 1})\neq 0
\end{displaymath}
and hence $\theta_i^{\mathrm{out}}L_i$ has at least one composition 
factor isomorphic to $\hat{L}_{i\pm 1}$, whenever $i\pm 1$ makes sense. 
Comparing the character of $\theta_i^{\mathrm{out}}L_i$ with its 
$\Phi_{\mathbf{x}}^{\mathttD}$-twisted character, we conclude that
$\theta_i^{\mathrm{out}}L_i$ has length three if $i=1,n-1$
and length four otherwise and the mentioned above simple subquotients are
all simple subquotients of $\theta_i^{\mathrm{out}}L_i$. Since 
$\theta_i^{\mathrm{out}}$ is exact, Theorem \ref{thm6} implies that there is no 
simple composition factor of the form $\hat{L_i}$ appearing in  
$\hat{P}_{j}(\infty)$, hence the claim \eqref{nohom} follows, since  
$\hat{P}_{i}(\infty)$ is a limit of projective covers of $\hat{L}_i$. The 
problem with left and right $\hat{R}$-module structures is dealt with in 
Lemma~\ref{Rcentral} below. Hence $\hat{D}_{\la,\xi}$ is a free 
left $\hat{R}$-module of rank $4n-2$ with a basis as stated in the proposition.
\end{proof}

\begin{lemma}\label{Rcentral}
The image $I$ of $\Psi$ is a central subalgebra.
\end{lemma}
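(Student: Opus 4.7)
The plan is to exploit the generation of $\hat{D}_{\la,\xi}$ by adjunction morphisms together with the naturality of these transformations. First I would observe that $\Psi$ is an algebra homomorphism: each $\Psi_i$ is an algebra map by functoriality of $F_i$, its image sits inside the corner $\operatorname{id}_i \hat{D}_{\la,\xi} \operatorname{id}_i$, and the product of images for $i\neq j$ vanishes because $\operatorname{id}_i\operatorname{id}_j=0$. Hence $I$ is a subalgebra, and it suffices to prove centrality.

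By Proposition~\ref{freeness}\eqref{freeness.2}, $\hat{D}_{\la,\xi}$ is generated as a left $\hat{R}$-module (the action being through $\Psi$) by the elements $\operatorname{id}_j$, $\alpha_i$, $\beta_i$, $\alpha_i\beta_i$ and $\beta_1\alpha_1$. Thus centrality of $\Psi(f)$ reduces to showing that it commutes with each of these generators. Commutation with the idempotents $\operatorname{id}_j$ is automatic since $\Psi(f)$ is block-diagonal by construction, acting as $F_j(f)$ on the summand $\hat{P}_j(\infty)$. The compositions $\alpha_i\beta_i$ and $\beta_1\alpha_1$ will follow once we have handled $\alpha_i$ and $\beta_i$ separately.

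The main step is therefore commutation with the adjunction morphisms $\alpha_i$ and $\beta_i$, and this is precisely where I would invoke naturality. The crucial functorial identity is
\begin{equation*}
F_{i+1} \;=\; \theta_{i+1}^{\mathrm{out}}\,\theta_{i+1}^{\mathrm{on}}\, F_i,
\end{equation*}
which follows directly from the explicit expression of $F_i$ as a string of translation functors. Under this identification, the basis morphism $\alpha_i\colon \hat{P}_i(\infty)\to \hat{P}_{i+1}(\infty)$ is nothing but the component at $F_i(\hat{R})$ of the adjunction unit $\alpha_i\colon \mathrm{Id}\to \theta_{i+1}^{\mathrm{out}}\theta_{i+1}^{\mathrm{on}}$. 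Applying the naturality square of $\alpha_i$ to the endomorphism $F_i(f)$ of $F_i(\hat{R})$ then yields
\begin{equation*}
F_{i+1}(f)\circ\alpha_i \;=\; \alpha_i\circ F_i(f),
\end{equation*}
which is exactly the identity $\Psi(f)\,\alpha_i=\alpha_i\,\Psi(f)$ in $\hat{D}_{\la,\xi}$. The argument for $\beta_i$ is entirely symmetric, starting instead from the counit $\beta_i\colon \theta_{i+1}^{\mathrm{out}}\theta_{i+1}^{\mathrm{on}}\to \mathrm{Id}$ and yielding $F_i(f)\circ\beta_i=\beta_i\circ F_{i+1}(f)$.

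I do not anticipate a serious obstacle: once the functorial identification $F_{i+1}=\theta_{i+1}^{\mathrm{out}}\theta_{i+1}^{\mathrm{on}}F_i$ is in place, centrality is a one-line consequence of naturality. The only point requiring mild care is verifying that the basis elements $\alpha_i$, $\beta_i$ of Proposition~\ref{freeness} really coincide (up to nonzero scalars, which is harmless) with the components of the adjunction natural transformations at $F_i(\hat{R})$, but this is essentially built into the construction of those basis vectors in the proof of Proposition~\ref{freeness}.
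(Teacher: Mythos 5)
Your proposal is correct and follows essentially the same route as the paper: reduce centrality to commutation with the basis morphisms $\alpha_i$, $\beta_i$ from Proposition~\ref{freeness} and then deduce $\Psi_{i+1}(f)\circ\alpha_i=\alpha_i\circ\Psi_i(f)$ (and the analogous identity for $\beta_i$) from the naturality of the adjunction unit and counit applied to $F_i(f)$, using $F_{i+1}=\theta_{i+1}^{\mathrm{out}}\theta_{i+1}^{\mathrm{on}}F_i$. The paper's proof is exactly this naturality argument, stated slightly more tersely.
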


\begin{proof}
By the proof of Proposition \ref{freeness} it is enough to show 
that any $f\in \hat{R}$ satisfies $\Psi(f)\circ\alpha_i=\alpha_i\circ \Psi(f)$ 
and $\Psi(f)\circ\beta_i=\beta_i\circ \Psi(f)$. From the definition of 
a natural transformation we have $\theta_{i+1}^{\mathrm{out}}\theta_{i+1}^{\mathrm{on}}(g)\circ 
\alpha_i=\alpha_i\circ g$ for any morphism $g$, in particular
$\theta_{i+1}^{\mathrm{out}}\theta_{i+1}^{\mathrm{on}}
(\theta_{i}^{\mathrm{out}}(g))\circ \alpha_i=\alpha_i\circ\theta_{i}^{\mathrm{out}}(g)$. 
From the definition of $\Psi$ it follows that
\begin{displaymath}
\Psi_{i+1}(f)\circ \alpha_i=\alpha_i\circ\Psi_i(f)
\end{displaymath}
and then $\Psi(f)\circ \alpha_i=\alpha_i\circ\Psi(f)$.  
Analogously one obtains $\Psi(f)\circ\beta_i=\beta_i\circ \Psi(f)$.
\end{proof}

\begin{proof}[Proof of Theorem~\ref{thm21}.]
By Theorem~\ref{thmmain}\eqref{thmmain.2}, Proposition~\ref{freeness} 
and Lemma~\ref{Rcentral}, the algebra $\hat{D}_{\la,\xi}$ has the 
structure of a $\mC[[x_1,x_2,\dots,x_n]]$-algebra with the basis 
as described in Proposition~\ref{freeness}\eqref{freeness.2}.
Restricting the above arguments to $\mathcal{C}_{\lambda,\xi}$
and using Theorem~\ref{thmmain}\eqref{thmmain.1},
we get that the algebra ${D}_{\la,\xi}$ has the structure of a
$\mC[[x]]$-algebra with the basis as described in 
Proposition~\ref{freeness}\eqref{freeness.2}. Hence there are  
obvious isomorphisms $\hat{D}_{\la,\xi}\cong A^{n-1}[[x_1,x_2,\dots,x_n]]$
and ${D}_{\la,\xi}\cong A^{n-1}[[x]]$ of 
$\mC[[x_1,x_2,\dots,x_n]]$- and $\mC[[x]]$-modules, respectively, 
sending $\alpha_i$ to $a_i$ and $\beta_i$ to $b_i$. 

Reducing modulo $\mathfrak{m}$ we get that the algebra 
$\hat{D}_{\la,\xi}^{(1)}\cong {D}_{\la,\xi}^{(1)}$ is generated by
the images of $\alpha_i$ and $\beta_i$ (since $n>2$). Thanks to \eqref{nohom}, 
we have the relations $\alpha_{i+1}\alpha_i=0$ and  $\beta_{i-1}\beta_i=0$. 

The compositions $\beta_{i}\alpha_i$ and  $\alpha_i\beta_{i}$ 
send the top of $\theta_i^{\mathrm{out}}L_i$ to the socle. This implies
$\alpha_i\beta_{i}\alpha_i=0$ and $\beta_{i}\alpha_i\beta_i=0$ for
all $i$. Therefore in case $n=3$ we get ${D}_{\la,\xi}^{(1)}\cong A^{2}$.

Since $\theta_i^{\mathrm{out}}L_i$ has simple socle, for
$n>3$ we obtain $\beta_{i}\alpha_i=c_i\alpha_i\beta_{i}$ for 
some $c_i\in\mathbb{C}$. That $c_i\not=0$ can easily be verified by a direct calculation. Rescaling $\alpha_{i-1}$, if 
necessary, we may assume $c_i=1$ for all $i$. It follows
that ${D}_{\la,\xi}^{(1)}\cong A^{n-1}$ for all $n$, which
completes the proof of Theorem~\ref{thm21}.
\end{proof}

\section{Explicit description of deformations}\label{s4}

\subsection{Deformations and Hochschild cohomology}\label{s4.1}

We use the notation from Subsection~\ref{s3.1}.
Identify $\mathbb{C}_{U}$ with $\mathbb{C}[[u_1,u_2,\dots,u_m]]$,
where $u_1,u_2,\dots,u_m$ is a basis of $U$. Then a deformation
$A_U$ of $A$ over $U$ is nothing else than the vector space
$A[[u_1, u_2,\ldots u_m]]$ together with a  $k[[\mathbf{u}]]$-linear
associative star product
\begin{equation}\label{eq33}
a\star b=\sum\mu_{\bf d}(a,b)\mathbf{u}^{\mathbf{d}},
\end{equation}
where the sum runs over all multi-indices
${\bf d}=(d_1,d_2,\ldots, d_m)\in\mathbb{Z}_{+}^m$, 
$\mathbf{u}=(u_1,u_2,\dots,u_m)$, 
$\mathbf{u}^{\mathbf{d}}=u_1^{d_1}u_2^{d_2}\cdots u_m^{d_m}$, 
$\mu_{\mathbf{d}}:A\otimes A\to A$ is a linear map and
$\mu_{\bf 0}(a,b)=ab$ for all $a,b\in A$.  In particular, if
$m=1$ and $u=u_1$, then
\begin{equation}\label{eq34}
a\star b=ab+\mu_1(a,b)u+\mu_2(a,b)u^2+\cdots.
\end{equation}

A classical result of Gerstenhaber (\cite{Ge}) says that infinitesimal
one-parameter deformations are classified by the second Hochschild
cohomology $\mathbf{HH}^2(A,A)$ of $A$ with values in $A$ in
the sense that the associativity of the star product implies that
$\mu_1:A\otimes A\rightarrow A$ is always a $2$-cocycle and the
isomorphism classes (of deformations) are exactly given by the 
coboundaries. The analogous statement for $m$-parameter deformations 
reads as follows (see \cite[Theorem 1.1.5]{BeGi}):

\begin{proposition}\label{prop41}
Isomorphism classes of infinitesimal $m$-parameter deformations of
$A$ are in bijection with the space
\begin{displaymath}
\mathrm{Hom}\big(
\mathrm{Hom}(\mathfrak{m}/\mathfrak{m}^2,\mathbb{C}),
\mathbf{HH}^2(A,A)\big).
\end{displaymath}
\end{proposition}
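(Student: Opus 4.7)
The plan is the straightforward multi-parameter generalisation of Gerstenhaber's one-parameter argument. I would fix a basis $u_1,\dots,u_m$ of $U$ and identify $\mathbb{C}_U$ with $\mathbb{C}[[u_1,\dots,u_m]]$, so that the classes of $u_1,\dots,u_m$ form a basis of $\mathfrak{m}/\mathfrak{m}^2 \cong U^*$. An infinitesimal $m$-parameter deformation, i.e.\ a deformation over $\mathbb{C}_U/\mathfrak{m}^2$, is then recorded by a $\mathbb{C}$-bilinear star product
\begin{equation*}
a\star b \;=\; ab \;+\; \sum_{i=1}^{m}\mu_i(a,b)\,u_i
\end{equation*}
with linear maps $\mu_i\colon A\otimes A\to A$ (this is \eqref{eq33} truncated modulo $\mathfrak{m}^2$). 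Expanding $(a\star b)\star c = a\star(b\star c)$ and reading off the coefficient of each $u_i$, using that the products $u_iu_j$ vanish in $\mathbb{C}_U/\mathfrak{m}^2$, yields for every $i$ the standard Hochschild $2$-cocycle identity
\begin{equation*}
a\,\mu_i(b,c) - \mu_i(ab,c) + \mu_i(a,bc) - \mu_i(a,b)\,c \;=\; 0,
\end{equation*}
and conversely any tuple of $2$-cocycles $(\mu_1,\dots,\mu_m)$ defines an associative star product on the truncation.

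Next I would analyse equivalence. A $\mathbb{C}_U/\mathfrak{m}^2$-algebra isomorphism between two such deformations that reduces to the identity modulo $\mathfrak{m}$ must have the form $\varphi(a) = a + \sum_i \psi_i(a)\,u_i$ for linear maps $\psi_i\colon A\to A$. Substituting into $\varphi(a\star b) = \varphi(a)\star'\varphi(b)$ and again comparing the coefficient of $u_i$ gives
\begin{equation*}
\mu'_i(a,b) - \mu_i(a,b) \;=\; a\,\psi_i(b) - \psi_i(ab) + \psi_i(a)\,b,
\end{equation*}
so $\mu'_i$ and $\mu_i$ differ by a Hochschild coboundary, and conversely any tuple of coboundaries is realised in this way by appropriate $\psi_i$. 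Together with the previous paragraph this establishes a bijection between isomorphism classes of infinitesimal $m$-parameter deformations and $\mathbf{HH}^2(A,A)^{m}$.

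Finally I would package the bijection coordinate-freely. The assignment $u_i\mapsto [\mu_i]$ extends linearly to a map $U\to \mathbf{HH}^2(A,A)$; under the canonical identifications $\mathfrak{m}/\mathfrak{m}^2\cong U^*$ and $U\cong U^{**} = \mathrm{Hom}(\mathfrak{m}/\mathfrak{m}^2,\mathbb{C})$ (which are legitimate since all spaces in sight are finite-dimensional), this linear map is precisely an element of
\begin{equation*}
\mathrm{Hom}\!\left(\mathrm{Hom}(\mathfrak{m}/\mathfrak{m}^2,\mathbb{C}),\; \mathbf{HH}^2(A,A)\right).
\end{equation*}
The only point requiring genuine care, and the expected main (though still routine) obstacle, is to verify that the resulting assignment is independent of the choice of basis $u_1,\dots,u_m$. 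This follows by tracking how the $\mu_i$ and $\psi_i$ transform under a linear substitution of the coordinates on $U$ and observing that this transformation matches the one on $\mathrm{Hom}(\mathrm{Hom}(\mathfrak{m}/\mathfrak{m}^2,\mathbb{C}),-)$ induced by functoriality.
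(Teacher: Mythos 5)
Your argument is correct, but it takes a genuinely different route from the paper. You give the explicit Gerstenhaber-style computation: truncate the star product modulo $\mathfrak{m}^2$, read off from associativity that each coefficient $\mu_i$ is a Hochschild $2$-cocycle, check that gauge transformations $\varphi=\mathrm{id}+\sum_i\psi_i u_i$ change the $\mu_i$ exactly by the Hochschild coboundaries of the $\psi_i$ (your sign differs from the one I get, but that only reflects which direction the isomorphism is read, so it is immaterial), and then repackage the resulting bijection with $\mathbf{HH}^2(A,A)^m$ coordinate-freely via $\mathfrak{m}/\mathfrak{m}^2\cong U^*$ and $U\cong U^{**}$, together with the basis-independence check you indicate. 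The paper instead argues homologically, following Bezrukavnikov--Ginzburg: from a deformation $B$ it takes the kernel $K$ of the multiplication map $B\otimes B\to B$, reduces modulo $\mathfrak{m}$ on both sides, and uses $\mathrm{Tor}_1(\mathbb{C}_U/\mathfrak{m},A)\cong A\otimes\mathfrak{m}/\mathfrak{m}^2$ to produce a four-term exact sequence of $A$-bimodules, i.e.\ an element of $\mathrm{Ext}^2_{A\text{-}A}(A,A\otimes\mathfrak{m}/\mathfrak{m}^2)$, which is precisely the stated $\mathrm{Hom}$-space. The paper's construction is intrinsic (no basis of $U$ is chosen, so no independence check is needed) and exhibits the functoriality in $U$ directly, but it leaves the verification that the assignment is a bijection to the reader; your computation is more elementary and makes the bijection (cocycles modulo coboundaries in each coordinate) completely explicit, at the cost of the routine coordinate-change bookkeeping at the end.
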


\begin{proof}
Let $B=A_U$ be an $m$-parameter deformation of $A$. Let $K$ be the kernel of
the multiplication map $B\otimes B\rightarrow B$. Then we have a short
exact sequence
\begin{displaymath}
0\rightarrow K\rightarrow B\otimes B\rightarrow B\rightarrow 0.
\end{displaymath}
Note that the kernel $K$ is a free right $B$-module, hence we may
reduce modulo $\mathfrak{m}$ from the right hand side and obtain a
short exact sequence
\begin{displaymath}
0\rightarrow K\otimes_{\mathbb{C}_U}\mathbb{C}_U/\mathfrak{m}\rightarrow
B\otimes B\otimes_{\mathbb{C}_U}\mathbb{C}_U/
\mathfrak{m}\rightarrow
B\otimes_{\mathbb{C}_U}\mathbb{C}_U/\mathfrak{m}\rightarrow 0.
\end{displaymath}
By reducing the latter modulo $\mathfrak{m}$ from the left hand side
we obtain  an exact sequence
\begin{equation}
\label{Ext2}
0\rightarrow A\otimes\mathfrak{m}/\mathfrak{m}^2
\rightarrow \mathbb{C}_U/\mathfrak{m}\otimes_{\mathbb{C}_U}K
\otimes_{\mathbb{C}_U}\mathbb{C}_U/\mathfrak{m}
\rightarrow A\otimes A
\rightarrow A\rightarrow 0,
\end{equation}
using the identification
$\mathbb{C}_U/\mathfrak{m}\otimes_{\mathbb{C}_U}A\otimes
A\otimes_{\mathbb{C}_U}\mathbb{C}_U/\mathfrak{m}=A\otimes A$ and
the identification
$\mathbb{C}_U/\mathfrak{m}\otimes_{\mathbb{C}_U}A\otimes_{\mathbb{C}_U}\mathbb{C
}_U/\mathfrak{m}=A$, the fact that $A\otimes A$ is a free left $A$-module,
and finally the isomorphism
\begin{displaymath}
\mathrm{Tor}_1(\mathbb{C}_U/\mathfrak{m}, A)\cong
A\otimes\mathrm{Tor}_1(\mathbb{C}_U/\mathfrak{m},\mathbb{C}_U/\mathfrak{m})=
A\otimes\mathfrak{m}/\mathfrak{m}^2.
\end{displaymath}
In particular, \eqref{Ext2} defines an element in
\begin{displaymath}
\mathrm{Ext}^2_{A-A}(A,A\otimes\mathfrak{m}/\mathfrak{m}^2)=
\mathrm{Hom}\big(\mathrm{Hom}(\mathfrak{m}/\mathfrak{m}^2,\mathbb{C}),
\mathbf{HH}^2(A,A)\big).
\end{displaymath}
One can check that this defines the required isomorphism.
\end{proof}

\subsection{Flat deformations associated with associative 
Hochschild $2$-cocycles}\label{s4.2}

Consider the natural partial order $\leq$ on $\mathbb{Z}_+^m$, defined
as follows: $(d'_1,d'_2,\dots,d'_m)\leq (d_1,d_2,\dots,d_m)$ provided
that $d'_i\leq d_i$ for all $i$. Note that for 
$\mathbf{d}',\mathbf{d}\in \mathbb{Z}_+^m$ we have
$\mathbf{d}'\leq \mathbf{d}$ if and only if 
$\mathbf{d}-\mathbf{d}'\in \mathbb{Z}_+^m$. Recall that 
$\{\varepsilon_i|i=1,2,\dots,m\}$ is the standard
basis of $\mathbb{Z}_+^m$. Set $\mathbf{0}=(0,0,\dots,0)$.

In the notation of \eqref{eq33}, 
the associativity equation $(a\star b)\star c=a\star (b\star c)$ 
reduces to the following system of equations, where 
$\mathbf{d}\in\mathbb{Z}_+^m$:
\begin{equation}\label{eq35}
\sum_{\mathbf{d}'\leq \mathbf{d}} 
\mu_{\mathbf{d}'}(\mu_{\mathbf{d}-\mathbf{d}'}(a,b),c)
=\sum_{\mathbf{d}'\leq \mathbf{d}}
\mu_{\mathbf{d}'}(a,\mu_{\mathbf{d}-\mathbf{d}'}(b,c)).
\end{equation}
In particular, in the notation of  \eqref{eq34} this reduces to the
following:
\begin{multline}\label{eq35n}
\mu_k(a,b)c+\mu_k(ab,c)-a\mu_k(b,c)-\mu_k(a,bc)=\\=
\sum_{i=1}^{k-1}\big( \mu_i(a,\mu_{k-1}(b,c))-\mu_i(\mu_{k-1}(a,b),c)\big).
\end{multline}
The right hand side of \eqref{eq35n} is a Hochschild $3$-cocycle and
the equation is solvable if and only if it is a coboundary. Hence
obstructions to extend infinitesimal deformations to flat deformations
are given by $\mathbf{HH}^3(A,A)$. We will need the following
easy observation:

\begin{proposition}\label{lem45}
Let $\nu$ be a Hochschild $2$-cocycle of $A$. For any 
$\mathbf{d}\in\mathbb{Z}_+^m\setminus\{\mathbf{0}\}$
choose $c_{\mathbf{d}}\in\mathbb{C}$.
Assume that $\nu$ is associative, that is $\nu(\nu(a,b),c)=\nu(a,\nu(b,c))$.
Then setting $\mu_{\mathbf{d}}=c_{\mathbf{d}}\nu$ for all $\mathbf{d}
\in\mathbb{Z}_+^m\setminus\{\mathbf{0}\}$
defines a flat deformation of $A$ over $\mathbb{C}[[u_1,u_2,\dots,u_m]]$.
\end{proposition}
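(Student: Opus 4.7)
The plan is to directly verify the system of associativity equations \eqref{eq35} for this specific choice of $\mu_{\mathbf{d}}$'s. The key observation is that since every $\mu_{\mathbf{d}}$ for $\mathbf{d}\neq\mathbf{0}$ is a scalar multiple of the \emph{same} bilinear map $\nu$, the infinitely many conditions \eqref{eq35} collapse to essentially just two identities on $\nu$.

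First I would handle the degenerate case $\mathbf{d}=\mathbf{0}$ separately: there the sum on each side has only the single term $\mu_{\mathbf{0}}(\mu_{\mathbf{0}}(a,b),c)=(ab)c$ (respectively $a(bc)$), so equality is exactly the associativity of the original product on $A$. Next, for fixed $\mathbf{d}\in\mathbb{Z}_+^m\setminus\{\mathbf{0}\}$, I would split the range $\{\mathbf{d}':\mathbf{0}\leq\mathbf{d}'\leq\mathbf{d}\}$ into the two boundary contributions ($\mathbf{d}'=\mathbf{0}$ and $\mathbf{d}'=\mathbf{d}$) and the interior contributions ($\mathbf{0}<\mathbf{d}'<\mathbf{d}$). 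Using $\mu_{\mathbf{0}}(x,y)=xy$ and $\mu_{\mathbf{d}'}=c_{\mathbf{d}'}\nu$ elsewhere, the left-hand side of \eqref{eq35} becomes
\begin{equation*}
c_{\mathbf{d}}\bigl(\nu(a,b)c+\nu(ab,c)\bigr)+\Bigl(\sum_{\mathbf{0}<\mathbf{d}'<\mathbf{d}}c_{\mathbf{d}'}c_{\mathbf{d}-\mathbf{d}'}\Bigr)\,\nu(\nu(a,b),c),
\end{equation*}
and analogously the right-hand side equals
\begin{equation*}
c_{\mathbf{d}}\bigl(a\,\nu(b,c)+\nu(a,bc)\bigr)+\Bigl(\sum_{\mathbf{0}<\mathbf{d}'<\mathbf{d}}c_{\mathbf{d}'}c_{\mathbf{d}-\mathbf{d}'}\Bigr)\,\nu(a,\nu(b,c)).
\end{equation*}

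Now I would invoke the two hypotheses on $\nu$. The Hochschild $2$-cocycle condition $(d\nu)(a,b,c)=a\,\nu(b,c)-\nu(ab,c)+\nu(a,bc)-\nu(a,b)c=0$ shows that the two $c_{\mathbf{d}}$-coefficient terms coincide, while the assumed associativity of $\nu$ gives $\nu(\nu(a,b),c)=\nu(a,\nu(b,c))$, so the coefficients of the inner sum match as well. Hence \eqref{eq35} holds for every $\mathbf{d}$, which is precisely the associativity of the star product defined by \eqref{eq33}. Flatness is automatic since by construction $A_{U}\cong A[[u_{1},\dots,u_{m}]]$ as a $\mathbb{C}_{U}$-module.

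There is no real obstacle here; the content of the statement is the dichotomy between the ``boundary'' part of \eqref{eq35} (which is governed by the cocycle identity of degree one in the deformation parameter) and the ``interior'' part (which produces iterated applications of $\nu$ and hence would normally require higher Massey-type obstructions, but is trivialised here by the associativity hypothesis on $\nu$). The only care to be taken is bookkeeping of the endpoint indices $\mathbf{d}'=\mathbf{0},\mathbf{d}$ which must be treated separately because $\mu_{\mathbf{0}}\neq c_{\mathbf{0}}\nu$.
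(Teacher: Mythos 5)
Your proof is correct and follows essentially the same route as the paper: split each associativity equation \eqref{eq35} into the boundary terms $\mathbf{d}'\in\{\mathbf{0},\mathbf{d}\}$, which cancel by the Hochschild $2$-cocycle condition, and the interior terms $\mathbf{0}<\mathbf{d}'<\mathbf{d}$, which match term by term thanks to the associativity of $\nu$, with flatness being automatic from $A_U\cong A[[u_1,\dots,u_m]]$ as a $\mathbb{C}_U$-module. The only cosmetic difference is that you collect the interior terms into a single sum while the paper verifies the cancellation for each pair $(\mathbf{d}',\mathbf{d}-\mathbf{d}')$ separately; the content is identical.
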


\begin{proof}
For any $\mathbf{0}<\mathbf{d}'<\mathbf{d}$ we have
\begin{multline*}
\mu_{\mathbf{d}'}(\mu_{\mathbf{d}-\mathbf{d}'}(a,b),c)=
c_{\mathbf{d}'}c_{\mathbf{d}-\mathbf{d}'}\nu(\nu(a,b),c)=\\
c_{\mathbf{d}'}c_{\mathbf{d}-\mathbf{d}'}\nu(a,\nu(b,c))=
\mu_{\mathbf{d}'}(a,\mu_{\mathbf{d}-\mathbf{d}'}(b,c))
\end{multline*}
because of the associativity of $\nu$. Taking into account that
$\nu$ is a Hochschild $2$-cocycle, we also have
\begin{multline*}
\mu_{\mathbf{d}}(a,b)c+\mu_{\mathbf{d}}(ab,c)-
a\mu_{\mathbf{d}}(b,c)-\mu_{\mathbf{d}}(a,bc)=\\
c_{\mathbf{d}}(\nu_{\mathbf{d}}(a,b)c+\nu_{\mathbf{d}}(ab,c)-
a\nu_{\mathbf{d}}(b,c)-\nu_{\mathbf{d}}(a,bc))=
0.
\end{multline*}
This implies that all equations in the system \eqref{eq35} are, 
in fact, identities. The claim follows.
\end{proof}

\begin{corollary}\label{cordef}
Assume that $\mathbf{HH}^2(A,A)$ is one-dimensional and that there exists
a nontrivial Hochschild $2$-cocycle $\nu$, which is associative
(as in Proposition~\ref{lem45}). Then any two $m$-parameter flat 
deformations $B_i$, $i=1,2$, of $A$, which are nontrivial
deformations when reduced modulo $\mathfrak{m}^2$, are isomorphic 
as associative algebras (but not necessarily as deformations).
\end{corollary}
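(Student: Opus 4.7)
My plan is to bring every such deformation, by a sequence of $\mathbb{C}$-algebra isomorphisms, to the ``standard form'' produced by Proposition~\ref{lem45}, and then to show that any two standard forms whose first-order parts lie in $\mathfrak{m}\setminus\mathfrak{m}^2$ become isomorphic after a suitable change of coordinates on $\mathbb{C}_U$.

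\textbf{Reduction to standard form.} Let $B$ be either $B_1$ or $B_2$, with star product $a\star b = \sum_{\mathbf{d}} \mu_{\mathbf{d}}(a,b)\,\mathbf{u}^{\mathbf{d}}$. I would proceed by induction on the total degree $|\mathbf{d}|$, constructing a $\mathbb{C}_U$-linear automorphism of the underlying free module $A[[\mathbf{u}]]$ which is the identity modulo $\mathfrak{m}$ and which transforms $\star$ into a product with $\mu_{\mathbf{d}} = c_{\mathbf{d}}\,\nu$ for all $\mathbf{d}\neq \mathbf{0}$ and appropriate scalars $c_{\mathbf{d}}$. At each step, assuming that all lower-order coefficients are already scalar multiples of $\nu$, the associativity equation \eqref{eq35} together with the associativity $\nu(\nu(a,b),c)=\nu(a,\nu(b,c))$ forces the cross-terms on both sides of \eqref{eq35n} to cancel, leaving $\mu_{\mathbf{d}}$ as a genuine Hochschild $2$-cocycle. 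Since $\mathbf{HH}^2(A,A) = \mathbb{C}\cdot[\nu]$, one writes $\mu_{\mathbf{d}} = c_{\mathbf{d}}\,\nu + \delta h_{\mathbf{d}}$ and absorbs the coboundary by the gauge transformation $x\mapsto x+h_{\mathbf{d}}(\bar x)\,\mathbf{u}^{\mathbf{d}}$, which only affects orders $\geq |\mathbf{d}|$ and therefore preserves the normalizations already achieved. Iterating produces an isomorphism between $B$ and a deformation of the form supplied by Proposition~\ref{lem45}.

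\textbf{Matching standard forms via a change of coordinates.} After the reduction, $B_i$ corresponds to a product $a\star_i b = ab + f_i(\mathbf{u})\,\nu(a,b)$ with $f_i\in\mathfrak{m}$, and the hypothesis that $B_i$ is a nontrivial deformation modulo $\mathfrak{m}^2$ forces $f_i\notin \mathfrak{m}^2$. Any $\mathbb{C}$-algebra automorphism $\phi$ of $\mathbb{C}_U$ extends by $a\otimes g\mapsto a\otimes\phi(g)$ to a $\mathbb{C}$-algebra isomorphism from the standard deformation with parameter $f$ to the one with parameter $\phi(f)$. Because $\mathbb{C}_U = \mathbb{C}[[u_1,\dots,u_m]]$ is a regular local ring, any element of $\mathfrak{m}\setminus\mathfrak{m}^2$ can be extended to a regular system of parameters, so there is an automorphism $\phi_i$ of $\mathbb{C}_U$ with $\phi_i(f_i)=u_1$. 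Both $B_1$ and $B_2$ are therefore $\mathbb{C}$-algebra isomorphic to the single standard deformation with $f=u_1$, and hence to each other.

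\textbf{Main obstacle.} The technical heart of the argument is the inductive reduction to standard form: I need to check carefully that the normalizations at lower orders combine with the associativity of $\nu$ so that the right-hand side of the multi-parameter version of \eqref{eq35n} vanishes identically, leaving $\mu_{\mathbf{d}}$ as a true $2$-cocycle, and that the sequence of gauge transformations can be composed into a single automorphism in the $\mathbf{u}$-adic topology without disturbing previously normalized orders. This is essentially deformation-theoretic bookkeeping, but it is precisely the place where the associativity hypothesis on $\nu$ does its work; once this bookkeeping is done, the rest of the proof reduces to the elementary change-of-coordinates argument above.
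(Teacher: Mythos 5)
Your proposal is correct and follows essentially the same route as the paper: by induction (using the one-dimensionality of $\mathbf{HH}^2(A,A)$ and the associativity of $\nu$, which makes the cross-terms in \eqref{eq35} cancel) every coefficient $\mu_{\mathbf{d}}$ is normalized, up to gauge/coboundary adjustments, to a scalar multiple of $\nu$, and then the resulting product $ab+f(\mathbf{u})\nu(a,b)$ with $f\in\mathfrak{m}\setminus\mathfrak{m}^2$ is brought to a fixed model by an automorphism of $\mathbb{C}[[u_1,\dots,u_m]]$ sending $f$ to $u_1$. The only cosmetic difference is that you make the coboundary-absorbing gauge transformations and the change of coordinates explicit, whereas the paper phrases these as ``up to isomorphism of deformations'' and writes down the coordinate change $u_1\mapsto\sum_{\mathbf{d}\neq\mathbf{0}}c'_{\mathbf{d}}\mathbf{h}^{\mathbf{d}}$ directly.
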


\begin{proof}
Choose $c_{\mathbf{d}}=0$ for all
$\mathbf{d}\in\mathbb{Z}_+^m\setminus\{\mathbf{0}\}$
except of $c_{\varepsilon_1}=1$. Then we have the corresponding flat 
deformation $B$ of $A$, given by  Proposition~\ref{lem45}.
To prove the corollary we may assume $B=B_2$.

Assume that $B_1$ is a deformation of $A$ over
$\mathbb{C}[[h_1,h_2,\dots,h_m]]$ with the star product given by
\eqref{eq33}. From Proposition~\ref{prop41}
it follows that up to isomorphism of deformations we may assume that 
$\mu_{\varepsilon_i}=c'_{\varepsilon_i}\nu$ for some
$c'_{\varepsilon_i}\in\mathbb{C}$ for all $i$. As
$B_1$ is nontrivial, when reduced modulo $\mathfrak{m}^2$, at least one
of $c'_{\varepsilon_i}$ is nonzero. 

Now we prove, by induction with respect to the order $\leq$, that 
$\mu_{\mathbf{d}}=c'_{\mathbf{d}}\nu$ for some 
$c'_{\mathbf{d}}\in\mathbb{C}$ for all
$\mathbf{d}\in\mathbb{Z}_+^m\setminus\{\mathbf{0}\}$. The basis
of the induction is proved in the previous paragraph. To prove the
induction step we take some 
$\mathbf{d}\in\mathbb{Z}_+^m\setminus\{\mathbf{0}\}$. Using the
inductive assumption, the corresponding equation from 
\eqref{eq35} reduces to 
\begin{displaymath}
\mu_{\mathbf{d}}(a,b)c+\mu_{\mathbf{d}}(ab,c)-  
a\mu_{\mathbf{d}}(b,c)-\mu_{\mathbf{d}}(a,bc)=0.
\end{displaymath}
This means that $\mu_{\mathbf{d}}$ must be a Hochschild $2$-cocycle.
As $\mathbf{HH}^2(A,A)$ is one-dimensional, up to isomorphism of
deformations we thus have $\mu_{\mathbf{d}}=c'_{\mathbf{d}}\nu$ for some 
$c'_{\mathbf{d}}\in\mathbb{C}$.

It follows that the star product for $B_1$ looks as follows:
\begin{displaymath}
a\star b= ab+\sum_{\mathbf{d}\neq\mathbf{0}} 
c'_{\mathbf{d}}\nu_{\mathbf{d}}(a,b)
\mathbf{h}^{\mathbf{d}}=ab+\nu_{\mathbf{d}}(a,b)
\left(\sum_{\mathbf{d}\neq\mathbf{0}} 
c'_{\mathbf{d}}\mathbf{h}^{\mathbf{d}}\right).
\end{displaymath}

Without loss of generality we may assume that $c'_{\varepsilon_1}$ 
is nonzero.  Then there is an isomorphism 
from $\mathbb{C}[[u_1,u_2,\dots,u_m]]$ 
to $\mathbb{C}[[h_1,h_2,\dots,h_m]]$ which maps $u_1$ to 
$\sum_{\mathbf{d}\neq\mathbf{0}}c'_{\mathbf{d}}\mathbf{h}^\mathbf{d}$
and $u_i$ to $h_i$ for all $i>1$. This induces a ring isomorphism
from ${B}_2$ to $B_1$. The statement follows.
\end{proof}

\subsection{Hochschild cohomology of $A^k$}\label{s4.3}

The even Hochschild cohomology of the algebras $A^k$ was first described in \cite{Ho1}. Thanks to \cite{Rickard}, \cite{EH}, \cite{Ho1}, there 
exists moreover an
explicit description of the full Hochschild cohomology ring. We will not need these detailed descriptions and just recall the result about the
dimensions:

\begin{proposition}\label{HolmandCo}
We have
\begin{displaymath}
\dim\mathbf{HH}^i(A^{k})=
\begin{cases}
k+1, & i=0;\\
1, & i>0.
\end{cases}
\end{displaymath}
\end{proposition}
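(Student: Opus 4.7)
I would treat the cases $i=0$ and $i>0$ separately.

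\emph{Case $i=0$.} The zeroth Hochschild cohomology equals the center $Z(A^k)$, which I would compute directly from the explicit basis $\mathbf{B}_k$ and the defining relations. Any central element $z$ lies in the ``diagonal'' subspace $\bigoplus_{j} e_jA^k e_j$, since from $e_s e_t = 0$ for $s\neq t$ and centrality one gets $e_szthinke_t = e_s e_t z = 0$ for $s\neq t$. The diagonal part has basis given by the $k$ idempotents $e_1,\ldots,e_k$ together with the $k$ loop elements $\ell_1 := b_1 a_1$, $\ell_i := b_ia_i = a_{i-1}b_{i-1}$ for $2\leq i\leq k-1$, and $\ell_k := a_{k-1}b_{k-1}$ (one per vertex). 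The relations $a_{i+1}a_i = 0 = b_ib_{i+1}$, together with the small-case relations $a_1b_1a_1=0=b_1a_1b_1$ in the $k=2$ case, imply that $a_j\ell_i = \ell_ia_j = 0$ and $b_j\ell_i = \ell_ib_j = 0$ for all $i,j$, so each $\ell_i$ is central. A combination $\sum c_je_j$ satisfies $(\sum c_je_j)a_j = c_{j+1}a_j$ and $a_j(\sum c_je_j) = c_ja_j$, so centrality forces $c_j = c_{j+1}$ for every $j$, i.e.\ proportionality to $1 = \sum_j e_j$. Thus $Z(A^k) = \mathbb{C}\cdot 1 \oplus \bigoplus_{i=1}^k \mathbb{C}\,\ell_i$, of dimension $k+1$.

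\emph{Case $i>0$.} Here I would appeal to the Brauer tree description of $A^k$ recalled just before the proposition: $A^k$ is the Brauer tree algebra of a straight line with $k-1$ edges and trivial (exceptional) multiplicity. By Rickard's theorem \cite{Rickard}, all Brauer tree algebras with a fixed number of edges and fixed multiplicity lie in a single derived equivalence class, and Hochschild cohomology is a derived invariant. The Hochschild cohomology of Brauer tree algebras was then explicitly computed by Holm in \cite{Ho1} (with refinements in \cite{EH}); specialising his formulae to the straight-line non-exceptional case yields $\dim \mathbf{HH}^i(A^k) = 1$ for every $i\geq 1$.

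\emph{Main obstacle.} The first paragraph is elementary once one unpacks the relations, so the essential difficulty sits in the second paragraph, where we lean on \cite{Rickard, Ho1, EH}. A self-contained proof would have to construct the minimal projective bimodule resolution of $A^k$ and compute $\operatorname{Ext}^i_{A^k\otimes (A^k)^{\operatorname{op}}}(A^k,A^k)$ by hand. For these Brauer tree algebras the resolution exhibits strong periodicity properties (reflecting the fact that $A^k$ is symmetric with periodic-type module category), and its differentials encode Koszul-like combinations of the relations $a_{i+1}a_i$, $b_ib_{i+1}$ and $b_ia_i - a_{i-1}b_{i-1}$. The real work lies in verifying that the induced complex on $\operatorname{Hom}_{A^k\otimes(A^k)^{\operatorname{op}}}(-,A^k)$ has one-dimensional cohomology in every positive degree. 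Holm's paper performs precisely this bookkeeping in full generality, so I would simply quote it here rather than reproduce the calculation.
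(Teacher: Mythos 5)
Your proposal is correct and, on the essential part $i>0$, it is exactly the paper's own proof: the paper likewise invokes Rickard's theorem that a Brauer tree algebra with $k$ simple modules and multiplicity $m$ is derived equivalent to the symmetric Nakayama algebra $B^m_k$ (a cycle of length $k$ modulo paths of length $mk+1$), and then quotes Holm and Erdmann--Holm for the dimensions, using that Hochschild cohomology is a derived invariant. The only genuine difference is at $i=0$: you compute $\mathbf{HH}^0(A^k)=Z(A^k)$ directly, whereas the paper folds this case into the same citations; your computation is correct (the relations force $\mathrm{rad}^3(A^k)=0$, so the $k$ length-two loops $\ell_i$ are central, and comparing coefficients on the arrows $a_j$ forces the idempotent part of a central element to be a multiple of $1=\sum_j e_j$), and it buys a self-contained confirmation of the count $k+1$. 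Two cosmetic remarks: the Brauer tree of $A^k$ has $k$ edges (one edge per simple module), not $k-1$ --- this slip is inherited from the paper's own parenthetical remark and is harmless, since the paper's proof itself works with ``$k$ simple modules'' and the multiplicity-one formulas give dimension one in every positive degree in any case; and the string ``$e_szthinke_t$'' in your first paragraph should of course read $e_s z e_t$.
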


\begin{proof}
Thanks to \cite[Theorem 4.2]{Rickard}, every Brauer tree algebra with
$k$ simple modules and exceptional multiplicity $m$ is derived equivalent
to a path algebra $B^m_k$ of a quiver which is a single oriented cycle of
length $k$ modulo the ideal generated by all pathes of length $mk+1$
(the latter is again a Brauer tree algebra). The algebras $A^k$ are
such Brauer tree algebras with $m=1$. Since Hochschild cohomology is
an invariant of the derived category, it is enough to determine the
Hochschild cohomology of $B_k^1$. Then the result is a special case
of \cite[Theorem 8.1]{Ho1} and \cite[Theorem 5.19]{EH}.
\end{proof}

In particular, we have $\dim\mathbf{HH}^2(A^{k})=1$. The corresponding
nontrivial Hochschild $2$-cocycle is given by the following:

\begin{lemma}\label{hochcoc}
In the basis $\mathbf{B}_k$ of $A^k$, the assignment
\begin{equation}\label{mu}
\begin{array}{ccc}
\mu:\quad A^{k}\otimes A^{k}&\longrightarrow&A^{k}\\
a_s\otimes b_s&\longmapsto&(-1)^{s+1}e_{s+1},\\
b_1\otimes a_1&\longmapsto&e_1,\\
a_sb_s\otimes a_sb_s&\longmapsto&(-1)^s a_sb_s,\\
a_s\otimes a_{s-1}b_{s-1}&\longmapsto&(-1)^{s-1}a_s\\
a_{s-1}b_{s-1}\otimes b_{s}&\longmapsto&(-1)^{s-1}b_s,
\end{array}
\end{equation}
extended by zero to all other basis vectors, is a 
nontrivial associative Hochschild $2$-cocycle.
\end{lemma}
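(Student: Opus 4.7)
My plan is to verify the lemma in three stages: first that $\mu$ is a Hochschild $2$-cocycle, second that $\mu$ is associative in the sense $\mu(\mu(a,b),c) = \mu(a,\mu(b,c))$, and finally that the cohomology class of $\mu$ in $\mathbf{HH}^2(A^k)$ is nonzero.

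Both the cocycle identity $(d\mu)(a,b,c) = a\mu(b,c) - \mu(ab,c) + \mu(a,bc) - \mu(a,b)c = 0$ and the associativity identity reduce to finite combinatorial checks on triples of basis elements from $\mathbf{B}_k$. Since $\mu$ is supported only on the five families of pairs displayed in~\eqref{mu}, each of the four terms $\mu(b,c), \mu(ab,c), \mu(a,bc), \mu(a,b)$ vanishes for all but finitely many triples, and I would enumerate those triples by looping the five supporting pairs through the $(a,b)$ and $(b,c)$ slots and running the remaining entry over all basis elements giving a nonzero product. The defining relations $a_{i+1}a_i = 0 = b_ib_{i+1}$, $b_ia_i = a_{i-1}b_{i-1}$ for $i > 1$, and $a_1b_1a_1 = 0 = b_1a_1b_1$ collapse all products to a short list of cases, and the signs in~\eqref{mu} cause the four cocycle terms to cancel pairwise in each. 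For associativity, note that the image of $\mu$ lies in $\operatorname{span}\{e_j, a_s, b_s, a_sb_s\}$, so iterating $\mu$ yields a nonzero result only when the output lands again in one of the five supporting families; an analogous case-check confirms that $\mu(\mu(a,b),c) = \mu(a,\mu(b,c))$.

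For the nontriviality, by Proposition~\ref{HolmandCo} we have $\dim \mathbf{HH}^2(A^k) = 1$, so it suffices to show $\mu$ is not a Hochschild coboundary. Suppose for contradiction that $\mu = d\phi$ for some linear $\phi\colon A^k \to A^k$. A direct calculation shows $a_s A^k = \mathbb{C}\{a_s, a_sb_s\}$ and $A^k b_s = \mathbb{C}\{b_s, a_sb_s\}$, both contained in $\operatorname{rad}(A^k)$. Hence neither $a_s\phi(b_s)$ nor $\phi(a_s)b_s$ has any $e_{s+1}$-component; reading the $e_{s+1}$-component of the equation $(d\phi)(a_s, b_s) = (-1)^{s+1}e_{s+1}$ then forces the coefficient of $e_{s+1}$ in $\phi(a_sb_s)$ to equal $(-1)^s$. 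Next, since $a_sb_s \in \operatorname{soc}(A^k)$, one has $a_sb_s \cdot x = x \cdot a_sb_s = c_{s+1}(x)\, a_sb_s$ for all $x \in A^k$, where $c_{s+1}(x)$ denotes the coefficient of $e_{s+1}$ in $x$. Combined with $a_sb_s \cdot a_sb_s = 0$, this gives
\[
(d\phi)(a_sb_s, a_sb_s) = a_sb_s\,\phi(a_sb_s) + \phi(a_sb_s)\,a_sb_s = 2c_{s+1}(\phi(a_sb_s))\, a_sb_s = 2(-1)^s a_sb_s,
\]
contradicting $\mu(a_sb_s, a_sb_s) = (-1)^s a_sb_s$. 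Hence $[\mu] \neq 0$ in $\mathbf{HH}^2(A^k)$, and by Proposition~\ref{HolmandCo} it generates the whole cohomology group.

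The main obstacle is the case-by-case bookkeeping in the cocycle and associativity checks, in particular the edge case $s = 1$: the element $b_1a_1$ behaves differently from the ``interior'' socle elements $a_sb_s$ since it is not of the form $a_{s-1}b_{s-1}$, so the corresponding triples require separate attention. Once all cases are listed, the vanishing of the cocycle expression is mechanical from the signs in~\eqref{mu}, and the nontriviality step is a short consequence of the radical--socle structure of $A^k$.
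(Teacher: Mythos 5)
Your nontriviality argument is correct and complete, and it is in fact more careful than the paper's own one-line justification: from the $e_{s+1}$-component of $(d\phi)(a_s,b_s)=(-1)^{s+1}e_{s+1}$ you extract $c_{s+1}(\phi(a_sb_s))=(-1)^s$, and then $(d\phi)(a_sb_s,a_sb_s)=2c_{s+1}(\phi(a_sb_s))\,a_sb_s$ clashes with $\mu(a_sb_s,a_sb_s)=(-1)^sa_sb_s$. The paper only remarks that $e_1\notin\mathrm{rad}(A^k)$ while $a_1,b_1\in\mathrm{rad}(A^k)$, which by itself does not exclude the term $\phi(b_1a_1)$ in the coboundary; your two-step argument closes exactly that loophole.

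The gap is in the first half of the lemma. You never carry out the cocycle and associativity checks; you only describe how to enumerate the relevant triples and assert that ``the signs in \eqref{mu} cause the four cocycle terms to cancel pairwise''. That assertion is not only unproved, it fails for the assignment exactly as displayed, and it fails precisely at the $s=1$ edge case you flag and then wave through. Take the triple $(b_1,a_1,b_1a_1)$: since $a_1\cdot b_1a_1=a_1b_1a_1=0$ and the displayed table vanishes both on $a_1\otimes b_1a_1$ and on $b_1a_1\otimes b_1a_1$ (recall $b_1a_1$ is a basis element of $\mathbf{B}_k$ distinct from every $a_sb_s$), the cocycle expression collapses to $-\mu(b_1,a_1)\,b_1a_1=-e_1\,b_1a_1=-b_1a_1\neq 0$. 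Hence the cocycle identity forces a nonzero value of $\mu$ on $a_1\otimes b_1a_1$ or on $b_1a_1\otimes b_1a_1$; the table \eqref{mu} must be supplemented at the loop $b_1a_1$ (for instance, with the other entries kept as printed, $b_1a_1\otimes b_1a_1\mapsto -b_1a_1$ is forced by this triple), and the whole finite case analysis — for the cocycle identity and for associativity — then has to be redone with the corrected table and written out, since your claimed ``mechanical'' cancellation is exactly what breaks here. Your nontriviality step survives unchanged, as it only uses the values of $\mu$ on $a_s\otimes b_s$ and $a_sb_s\otimes a_sb_s$, but as it stands the central computational claim of your proposal is both unverified and, for the formula as written, false.
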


\begin{proof}
That $\mu$ is an associative Hochschild $2$-cocycle is checked
by a straightforward computation. The element $e_1$ does not belong 
to the radical of $A^{k}$, while both $b_1$ and $a_1$ do. Hence 
$b_1\otimes a_1\mapsto e_1$ is not possible for any coboundary, which 
implies that $\mu$ is nontrivial. The claim follows.
\end{proof}

\subsection{One-parameter deformations of $A^k$}\label{s4.4}

Let us start with an explicit example, illustrating
Proposition~\ref{HolmandCo} and Corollary~\ref{cordef}.

\begin{example}\label{dualnumbers}
{\rm
Consider the algebra $A^1=\mathbb{C}[X]/(X^2)$ and the bimodule
$R:=A^1\otimes A^1$.
Let $f,g:R\rightarrow A^1$ be the $A^1$-bimodule maps given by
$f(1\otimes 1)=X\otimes 1-1\otimes X$ and
$g(1\otimes 1)=X\otimes 1+1\otimes X$. Then
\begin{displaymath}
\cdots\stackrel{g}\rightarrow R\stackrel{f}\rightarrow R\stackrel{g}{\rightarrow} R\stackrel{f}{\rightarrow} R\stackrel{\mathrm{mult}}{\rightarrow} A^1_0\rightarrow 0
\end{displaymath}
is a free $R$-module resolution of $A^1$. A direct computation gives
$\mathbf{HH}^0(A^1,A^1)=A^1$ (as $A^1$ is commutative) and
$\mathbf{HH}^i(A^1,A^1)\cong \mathbb{C}$ for all $i>0$
(as claimed by Proposition~\ref{HolmandCo}).
In particular, $\mathbf{HH}^2(A^1,A^1)=\mathbb{C}$, hence
there is a unique nontrivial infinitesimal one-parameter deformation
for which we can take the corresponding nontrivial associative
Hochschild $2$-cocycle $\mu_1$ as follows: $\mu_1(a,b)=0$ 
for $(a,b)\in\{(1,1),(1,X),(X,1)\}$ and $\mu_1(X,X)=1$. Hence,
by Corollary~\ref{cordef} we have a unique, up to isomorphism,
associative algebra, which is an $m$-parameter deformation of $A^1$, 
nontrivial when reduced modulo $\mathfrak{m}^2$. If $m=1$, this is realized
as follows: consider $A^1[[t]]$ with $X\star X=t$ and therefore isomorphic to
$\mathbb{C}[[u]]$ by sending $X$ to $u$. If $m>1$, we similarly
get the polynomial algebra $\mathbb{C}[[u_1,u_2,\dots,u_m]]$.
}
\end{example}

This example can be generalized to all $A^k$ as follows: For $k>1$
consider the quotient $\hat{B}^k$ of the path algebra of the following 
quiver (in case $k$ is even or odd, respectively):
\begin{displaymath}
\xymatrix{
\ar@(dl,ul)^{y_1}{1}\ar@/^/[r]^{x_1}&\ar@/^/[l]^{x_2}
{2}\ar@/^/[r]^{y_2}&\ar@/^/[l]^{y_3}{3}\ar@/^/[r]^{x_3}
&\cdots\ar@/^/[l]^{x_4}\ar@/^/[r]^{x_{k-1}}&\ar@/^/[l]^{x_{k}}
{k}\ar@(ur,dr)^{y_{k}}\\ \\
\ar@(dl,ul)^{y_1}{1}\ar@/^/[r]^{x_1}&\ar@/^/[l]^{x_2}
{2}\ar@/^/[r]^{y_2}&\ar@/^/[l]^{y_3}{3}\ar@/^/[r]^{x_3}
&\ar@/^/[l]^{x_4} \cdots
\ar@/^/[r]^{y_{k-1}}
&\ar@/^/[l]^{y_{k}}{k}\ar@(ur,dr)^{x_{k}}\\
}
\end{displaymath}
modulo the relations $x_iy_j=0=y_jx_i$ whenever the expression makes
sense. Let $B^k$ denote the completion of $\hat{B}^k$ with respect to the
ideal generated by all arrows.

Consider also the deformation $A^k[[t]]$ of $A^k$ in which the product 
$\star$ is given by $x\star y=xy+\mu(x,y)t$, where $\mu$ is as
in Lemma~\ref{hochcoc}. Associativity of this product follows 
from Proposition~\ref{lem45}. Obviously, $A^k[[t]]$ is nontrivial when 
reduced modulo $\mathfrak{m}^2$.

\begin{theorem}\label{thm29}
\begin{enumerate}[(i)]
\item\label{thm29.1} Putting the $x$'s and $y$'s in degree one, turns 
the algebra $\hat{B}^{k}$ into a Koszul algebra.
\item\label{thm29.2} The algebra $B^k$ has the structure of a 
one-parameter flat deformation of $A^k$, isomorphic, as a deformation,
to the deformation $A^k[[t]]$ described above. 
\end{enumerate}
\end{theorem}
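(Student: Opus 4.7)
For part \eqref{thm29.1}, the key observation is that every defining relation $x_iy_j=0=y_jx_i$ of $\hat{B}^k$ is a quadratic monomial in the arrows. Placing the arrows in degree one makes $\hat{B}^k$ a quadratic monomial quotient of a path algebra, so Koszulity is immediate from Fröberg's classical theorem that every quadratic monomial algebra is Koszul. Equivalently, the set of paths avoiding the forbidden sub-words $x_iy_j$ and $y_jx_i$ provides a PBW basis for $\hat{B}^k$, from which Koszulity can also be read off directly.

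For part \eqref{thm29.2}, the plan is to exhibit an explicit central element $t\in B^k$, show that $B^k$ thereby becomes a flat $\mathbb{C}[[t]]$-algebra with $B^k/(t)\cong A^k$, and finally identify the resulting deformation with $A^k[[t]]$ via Corollary~\ref{cordef}. At each vertex $i\in\{1,\ldots,k\}$ the relations leave exactly two quadratic closed paths: an ``$x$-type'' loop $p_i$ and a ``$y$-type'' loop $q_i$ (at interior vertices these are the two round-trips to the neighbouring vertices; at the endpoints one round-trip together with the self-loop $y_1$, $y_k$ or $x_k$). The monomial relations immediately force $p_iq_i=q_ip_i=0$, so $e_iB^ke_i\cong\mathbb{C}[[p_i,q_i]]/(p_iq_i)$. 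Choose scalars $c_i\in\mathbb{C}^{\times}$, set $t_i:=p_i-c_iq_i\in e_iB^ke_i$ and $t:=\sum_{i=1}^k t_i$. A short computation shows that multiplying $t_j$ with an arrow leaving vertex $j$ produces the same cubic path as multiplying the arrow with $t_{j'}$ at the vertex on the other end provided the $c_i$'s are matched across each edge; this system of compatibilities has a one-parameter family of solutions and makes $t$ central.

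With $t$ central, eliminating $q_i$ via $t_i=p_i-c_iq_i$ rewrites the relation $p_iq_i=0$ as $p_i^2=tp_i$, so $e_iB^ke_i\cong\mathbb{C}[[t]][p_i]/(p_i^2-tp_i)$ is free of rank two over $\mathbb{C}[[t]]$ with basis $\{e_i,p_i\}$. An analogous argument shows each nearest-neighbour space $e_{i\pm1}B^ke_i$ is $\mathbb{C}[[t]]$-free of rank one (generated by the connecting arrow), and that non-neighbour spaces vanish. Hence $B^k$ is topologically free of rank $4k-2=\dim A^k$ over $\mathbb{C}[[t]]$. Reducing modulo $t$ identifies $p_i$ with $c_iq_i$ at each vertex, which collapses the presentation of $B^k$ to a presentation of $A^k$ (matching $x_i,y_i$ with the $a_i,b_i$ and sending both loops at each vertex to the unique loop of $A^k$); hence $B^k/(t)\cong A^k$, and the equation $p_i^2=tp_i$ witnesses that this flat deformation is non-trivial modulo $\mathfrak{m}^2$. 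Combining this with Proposition~\ref{HolmandCo} (so $\dim\mathbf{HH}^2(A^k)=1$) and with the associative cocycle provided by Lemma~\ref{hochcoc}, Corollary~\ref{cordef} yields an isomorphism $B^k\cong A^k[[t]]$ of associative algebras.

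\textbf{Main obstacle.} Corollary~\ref{cordef} produces only an isomorphism of associative algebras, whereas the theorem claims an isomorphism of deformations. The upgrade is a rescaling of the deformation parameter: by Proposition~\ref{prop41} the first-order term of the star product on $B^k$ is a non-zero scalar multiple of $\mu$, and absorbing this scalar into $t$ gives the deformation isomorphism. The delicate step is the simultaneous bookkeeping of signs --- the scalars $c_i$ must be chosen so that $t$ is central, and the resulting first-order correction vertex by vertex must reproduce the cocycle $\mu$ with its $(-1)^s$ pattern from Lemma~\ref{hochcoc}. Checking that both systems of sign conditions are solvable by the same choice (up to a global rescaling of $t$) is the technical heart of the argument, but it is forced essentially by the uniqueness of the non-trivial class in $\mathbf{HH}^2(A^k)$.
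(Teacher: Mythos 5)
Your route for Theorem~\ref{thm29} is essentially the paper's: part \eqref{thm29.1} by the quadratic--monomial criterion for Koszulity, and part \eqref{thm29.2} by exhibiting the central element (sum over vertices of the shortest $x$-loop minus the shortest $y$-loop), proving topological freeness over $\mathbb{C}[[t]]$, identifying the quotient modulo $t$ with $A^k$, and then invoking Corollary~\ref{cordef} together with $\dim\mathbf{HH}^2(A^k)=1$. Your vertex-by-vertex freeness computation ($e_iB^ke_i\cong\mathbb{C}[[p_i,q_i]]/(p_iq_i)$, rank-one nearest-neighbour pieces, vanishing otherwise, total rank $4k-2$) is a sound reorganization of the paper's global $\mathbb{C}[t(k)]$-basis of tautological preimages; the paper simply takes all your constants $c_i$ equal to $1$ (incidentally, centrality only couples the $c_i$ across $y$-edges, so the solution set is larger than a one-parameter family, but any admissible nonzero choice works, so this is harmless). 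The step ``reducing modulo $t$ collapses the presentation to $A^k$'' should be backed, as in the paper, by an explicit homomorphism in one direction plus the dimension count your freeness already provides.

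There is, however, one step that fails as stated, and it is one you genuinely need in order to apply Corollary~\ref{cordef}: the nontriviality of the deformation modulo $\mathfrak{m}^2$. The relation $p_i^2=tp_i$ is not a witness for this, because the trivial deformation $A^k[t]/(t^2)$ also contains a lift of the loop $\ell_i$ of $A^k$ at vertex $i$ satisfying the same equation, namely $\ell_i+\tfrac{t}{2}e_i$; so the existence of such an element is compatible with triviality. One needs an identity that no choice of lifts in the trivial deformation can reproduce. The paper's choice does the job after a one-line elaboration: in $B^k/J^2$ one has $x_1x_2x_1=t\,x_1$, while for arbitrary lifts $a_1+tu$, $b_1+tv$, $a_1+tu'$ in $A^k[t]/(t^2)$ the triple product equals $t(ub_1a_1+a_1b_1u')\in t\cdot\mathrm{span}(b_1a_1,a_1b_1)$, which can never be congruent to $t\,a_1$; hence no deformation isomorphism onto the trivial deformation exists. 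Finally, on your ``main obstacle'': Corollary~\ref{cordef} and a reparametrization $t\mapsto g(t)$ only give an isomorphism of associative algebras (or of deformations up to rescaling the parameter, as in Corollary~\ref{cor48}); since a deformation isomorphism in the paper's sense is $\mathbb{C}[[t]]$-linear and preserves the first-order class, the literal claim ``isomorphic as a deformation to $A^k[[t]]$'' requires knowing that the first-order term of $B^k$ is cohomologous to $\mu$ itself, not merely to a nonzero multiple. The paper settles this by writing down the explicit isomorphism of deformations (Remark~\ref{remnn123}), sending the generators of $A^k[[t]]$ to arrows of $B^k$ and $t$ to $t(k)$; some such explicit check (which also absorbs the sign pattern of Lemma~\ref{hochcoc}) is the cleanest way to close your argument.
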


\begin{proof}
The algebra $\hat{B}^k$ is by definition quadratic with monomial relations,
hence Koszul, see \cite[Corollary II.4.3]{PP} (it is in fact straightforward 
to write down projective resolutions of simple modules and see that they 
are linear). This proves \eqref{thm29.1}. To prove the second statement
we will have to check several things.

For every point $i$ of the quiver set $t_i=x(i)-y(i)$, where $x(i)$
is the shortest $x$-loop starting in $i$ and $y(i)$ is the shortest
$y$-loop starting in $i$ (for example, $y(1)=y_1$, $x(1)=x_2x_1$,
$x(2)=x_1x_2$, $y(2)=y_2$ if $k=2$ and $y(2)=y_3y_2$ if $k>2$ and so on).
Define $t(k)$ to be the sum of all $t_i$'s. The element $t(k)$
is a sum of loops and hence commutes with all idempotents of the path
algebra. As all $x$-monomials of $t(k)$ have coefficient $1$ and the
product of any $x$ and any $y$ is zero, it follows that
$t(k)$ commutes with all $x_i$. As all $y$-monomials of $t(k)$ have
coefficient $-1$ and the product of any $x$ and any $y$ is zero, it
follows that $t(k)$ commutes with all $y_j$. Hence $t(k)$ is a
central element of $\hat{B}^k$.

Let $I$ denote the ideal of $\hat{B}^k$, generated by all arrows
and $J$ denote the ideal of $\hat{B}^k$, generated by $t(k)$.
Then $J\subset I$. At the same time,
$I^4$ is generated by elements of the form
$x_{i}x_{i\pm 1}x_{i}x_{i\pm 1}$ and
$y_{i}y_{i\pm 1}y_{i}y_{i\pm 1}$, which both belong to
$J^2$. Hence $I^4\subset J^2$, which implies that the completions of
$\hat{B}^k$ with respect to $I$ and $J$ coincide, and hence both are
isomorphic to $B^k$.

Next we claim that $X:=\hat{B}^k/J\cong A^k$. Indeed, using the
relation $x_iy_j=0=y_jx_i$ it is easy to see that there is a
unique algebra homomorphism $\varphi:\hat{B}^k\to A^k$, which maps
$y_1$ to $b_1a_1$, $x_1$ to $a_1$, $x_2$ to $b_1$,
$y_2$ to $a_2$, $y_3$ to $b_3$ and so on (i.e. all non-loop arrows are
mapped to the corresponding non-loop arrows, and loops at the
ends are mapped to length two loops at the ends). From the definition
we have that $\varphi$ is surjective and maps $t(k)$ to $0$. Therefore
it induces a surjective homomorphism
$\overline{\varphi}:\hat{B}^k/J\tto A^k$. At the same time
it is easy to see that the images of non-loop arrows in $\hat{B}^k/J$
satisfy the defining relations of $A^k$. Therefore
$\overline{\varphi}$ is injective and hence an isomorphism.

It is easy to see that the algebra $\hat{B}^k$ has a $\mathbb{C}$-basis
$\hat{\mathbf{B}}_k$, which consists of all trivial paths, all monomials
in $x_i$'s and all monomials in $y_j$'s which make sense.
We claim that $\hat{B}^k$ is free over $\mathbb{C}[t(k)]$.
The map $\varphi$ maps elements from $\hat{\mathbf{B}}_k$ to elements from
${\mathbf{B}}_k$ or zero. For every element from $\mathbf{B}_k$ fix
its tautological preimage under $\varphi$ (for example $e_i$ is the
preimage of $e_i$, $x_1$ is the preimage of $a_1$, $x_2x_1$ is the
preimage of $b_1a_2$ and so on). Call the latter set
$\underline{\hat{\mathbf{B}}}_k$. There is an obvious bijection between
the elements in $\hat{\mathbf{B}}_k$ and elements $zt(k)^i$,
where $z\in \underline{\hat{\mathbf{B}}}_k$ and $i\in\mathbb{Z}_+$. It follows
that $\underline{\hat{\mathbf{B}}}_k$ is a free $\mathbb{C}[t(k)]$-basis
of $\hat{B}^k$. The latter yields that $B^k$ is free over
$\mathbb{C}[[t(k)]]$  and hence is a flat deformation of $A^k$.

It is easy to check that $x_1x_2x_1\neq 0$ in $B^k/J^2$, which yields
that this deformation is nontrivial when reduced modulo $J^2$.
Therefore \eqref{thm29.2} follows from Corollary~\ref{cordef},
completing the proof of the theorem.
\end{proof}

\begin{remark}\label{remnn123}
{\rm 
One easily checks that the map
\begin{displaymath}
\begin{array}{ccc}
\Psi: A^{m}[[t]]&\longrightarrow&B^{m},\\
e_i&\mapsto&e_i,\\
a_s&\mapsto&
\begin{cases}
x_s &\text{if $s$ odd}\\
y_s &\text{if $s$ even}
\end{cases}\\
b_s&\mapsto&
\begin{cases}
x_{s+1} &\text{if $s$ odd,}\\
y_{s+1} &\text{if $s$ even,}
\end{cases}\\
b_1a_1&\mapsto& y_1,\\
a_{k-1}b_{k-1}&\mapsto&
\begin{cases}
y_k, &\text{if $k$ even,}\\
x_k, &\text{if $k$ odd,}
\end{cases},\\
a_sb_s&\mapsto&
\begin{cases}
x_{s}x_{s+1}, &\text{if $s$ even,}\\
y_{s}y_{s+1}, &\text{if $t$ odd,}
\end{cases}\\
t&\mapsto&t(k)
\end{array}
\end{displaymath}
for $i$ and $s$ for which the expression makes sense, is an isomorphism
of deformations. 
}
\end{remark}

The algebras $\hat{B}^k$ from Theorem~\ref{thm29} are studied in
\cite{GP}, \cite{Kh}. They belong to the class of special biserial algebras
(\cite{WW}), in particular, they are tame.  A complete description
of indecomposable modules over these algebras can be found in
\cite{Kh}, \cite{WW}, \cite{GS}. The algebra $\mathbb{C}[[x,y]]$ is 
a classical wild algebra. Therefore from 
Theorem~\ref{thmmain}\eqref{thmmain.2} and
Theorem~\ref{thm21} it follows that 
all nonzero categories $\hat{\mathcal{C}}_{\lambda,\xi}$ are wild.

\begin{remark}\label{rem29}
{\rm
By Theorem~\ref{thm29}, the algebra $\hat{B}^k$ is Koszul,  as well as 
$A^1$. This is however not the case for $A^k$ if $k>1$ (as can easily be 
seen by writing down explicit projective resolutions, for $k=2$ the 
algebra is not even quadratic).
}
\end{remark}

\subsection{One-parameter graded deformations of $A^k$}\label{s4.5}

The algebra $A^{k}$ can be equipped with two different natural
nonnegative $\mathbb{Z}$-gradings $A^{k}=\oplus_{j\geq 0}A^{k}(j)$
by either putting the $a_i$'s and the $b_i$'s all in degree one or by
putting the $a_i$'s in degree one and the $b_i$'s in degree zero,
respectively. In the first case we get a positive grading in the sense
of \cite{MOS}, which is natural in the study of Koszul algebras.

\begin{remark}\label{rem30}
{\rm
In \cite{BLPPW} universal Koszul deformations were studied.
This construction leads to different deformations than the one
obtained in this paper as universal Koszul deformations behave badly
with respect to taking the centralizer subalgebras. For example, as 
mentioned earlier,  the algebra $\mathbb{C}[X]/(X^2)\cong A^1$ is the 
centralizer subalgebra of the Koszul quasi-hereditary algebra $\tilde{A}^1$ 
(the latter one is graded in the usual way by path length). 
Let $\tilde{A}^1[t]$ be the universal Koszul deformation in the sense 
of \cite{BLPPW}. Then one might hope that the corresponding 
centralizer subalgebra in $\tilde{A}^1[t]$ is isomorphic to the 
deformation of $A^1$ described in Example~\ref{dualnumbers}. 
This is however not the case since the first one is isomorphic
to $\mathbb{C}[X]\otimes_{\mathbb{C}[X^2]}\mathbb{C}[X]$,
whereas the second one is isomorphic to $\mathbb{C}[X]$.
Analogous phenomenon appears for all $A^k$.
}
\end{remark}

\begin{remark}\label{rem30.1}
{\rm
The $0$-th Hochschild cohomology of $A^k$, i.e. the center of the algebra $A^k$, or, more generally, of generalized Khovanov algebras has a geometric interpretation via the cohomology ring of Springer fibres (\cite{Br}, \cite{St2}). In \cite{St2}, the Koszul deformation (in the sense 
of Remark~\ref{rem30}) was used as an essential tool to obtain the result. 
The center of this deformed algebra is in fact the $T$-equivariant 
cohomology ring (\cite{St2}, \cite{GM}), for a one-dimensional torus $T$.
}
\end{remark}

Despite of Remark~\ref{rem29}, the algebra $B^k$ can be understood using
the graded picture. The algebra $\hat{B}^k$ is graded in the natural way
by putting all non-loop arrows in degree one and loops in degree two
(in case $m=1$ the only loop is put in degree one). The following was
established during the proof of Theorem~\ref{thm29}: The algebra 
$\hat{B}^k$ is free over the graded central subalgebra $\mathbb{C}[t(k)]$ 
and the quotient of $\hat{B}^k$ over the homogeneous ideal generated by 
$t(k)$ is isomorphic to $A^k$ as a graded algebra. This means that
$\hat{B}^k$ is a graded flat one-parameter deformation of $A^k$.
The algebra $B^k$ is the completion of the positively graded algebra 
$\hat{B}^k=\oplus_{j\geq 0}\hat{B}^k(j)$ with respect to the 
graded radical, that is the ideal $\oplus_{j>0}\hat{B}^k(j)$. 
The graded version of the deformation theory is described in \cite{BrGa}.
Using these results we obtain:

\begin{corollary}\label{cor48}
Let $k\in\mathbb{N}$. Consider $A^k$ as a graded algebra by assuming
that all arrows have degree one (except $k=1$ when the only loop is
assumed to have degree two). Then $\hat{B}^k$ is the 
unique (up to rescaling of the deformation parameter)
nontrivial graded flat one-parameter deformation of $A^k$.
\end{corollary}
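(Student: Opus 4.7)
The plan is to combine Proposition~\ref{HolmandCo} and Lemma~\ref{hochcoc} with the graded deformation theory of \cite{BrGa}, adapting the argument of Corollary~\ref{cordef} to keep track of the internal degree. First I would verify that the nontrivial Hochschild $2$-cocycle $\mu$ exhibited in Lemma~\ref{hochcoc} is homogeneous of internal degree $-2$ for $k\geq 2$; this is immediate from inspecting \eqref{mu}, since every assignment drops the path-length grading by exactly $2$. For $k=1$ the analogous cocycle from Example~\ref{dualnumbers} has internal degree $-4$. Combined with the one-dimensionality $\dim\mathbf{HH}^2(A^k,A^k)=1$, this shows that the graded Hochschild cohomology $\mathbf{HH}^2(A^k,A^k)$ is concentrated in a single internal degree, namely $-2$ (respectively $-4$).

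Next I would invoke the graded analogue of Proposition~\ref{prop41} proved in \cite{BrGa}: graded infinitesimal one-parameter deformations of $A^k$ with parameter $t$ of internal degree $d$ are classified by the degree-$d$ piece of $\mathbf{HH}^2(A^k,A^k)$. The preceding step therefore forces $t$ to live in degree $2$ (respectively $4$), and identifies the infinitesimal class of the deformation, up to a scalar, with the class of $\mu$.

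I would then upgrade infinitesimal uniqueness to all-orders uniqueness by a graded version of the induction in the proof of Corollary~\ref{cordef}. Let $B$ be any graded flat one-parameter deformation of $A^k$, nontrivial modulo $(t)^2$, with star product $a\star b=\sum_{i\geq 0}\mu_i(a,b)t^i$; homogeneity forces each $\mu_i$ to have internal degree $-2i$. After a graded change of variables we may assume $\mu_1=c_1\mu$ with $c_1\neq 0$. Inductively, if $\mu_1=c_1\mu$ and $\mu_j=0$ for $1<j<i$, then the $t^i$-coefficient of the associativity constraint \eqref{eq35n}, combined with the associativity of $\mu$ (Lemma~\ref{hochcoc}), reduces to the condition that $\mu_i$ is a Hochschild $2$-cocycle of internal degree $-2i$. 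For $i>1$ this degree lies outside the support of $\mathbf{HH}^2(A^k,A^k)$, hence $\mu_i$ is a graded coboundary and can be absorbed into a further graded change of variables of the form $\mathrm{id}+t^i\cdot f$, where $f\colon A^k\to A^k$ is a homogeneous linear map of degree $-2i$. Iterating, we obtain a graded isomorphism of deformations between $B$ and the star product $a\star b=ab+c_1\mu(a,b)t$, which by Proposition~\ref{lem45}, Theorem~\ref{thm29}\eqref{thm29.2} and Remark~\ref{remnn123} is isomorphic to $\hat{B}^k$ after the rescaling $t\mapsto c_1 t$ (the only graded automorphism of $\mathbb{C}[[t]]$ available, since $t$ sits in a single nonzero degree).

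The main obstacle, I expect, is the bookkeeping required to show that the inductive sequence of graded changes of variables converges in the $(t)$-adic topology to a well-defined isomorphism of graded deformations. This is standard graded deformation-theoretic hygiene; its genuine content is the vanishing of $\mathbf{HH}^2(A^k,A^k)$ in all internal degrees $-2i$ for $i>1$, which is already ensured by Proposition~\ref{HolmandCo} together with the homogeneity of $\mu$ established in the first step.
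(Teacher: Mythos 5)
Your proposal is correct and follows essentially the same route as the paper: both arguments rest on the existence of the graded deformation $\hat{B}^k$ from Theorem~\ref{thm29}, the one-dimensionality of $\mathbf{HH}^2(A^k,A^k)$ from Proposition~\ref{HolmandCo}, and the observation that this cohomology is concentrated in the single internal degree carried by the cocycle $\mu$, so that no freedom remains in extending the (unique up to rescaling) nontrivial graded infinitesimal deformation. The only difference is presentational: you establish the higher-order uniqueness by hand through a graded rerun of the induction from Corollary~\ref{cordef}, where the paper simply invokes \cite[Proposition~1.5(c)]{BrGa}.
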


\begin{proof}
The algebra $\hat{B}^k$ has the structure of a nontrivial 
graded flat one-parameter deformation of $A^k$ given by Theorem~\ref{thm29}. 
In particular, $A^k$ has a nontrivial
graded infinitesimal deformation, which is unique up to rescaling of the deformation parameter as $\mathbf{HH}^2(A^k,A^k)$ is one-dimensional.
The existence of $\hat{B}^k$ says that this deformation extends to a 
nontrivial flat graded deformation of $A^k$ by Theorem~\ref{thm29}. By
\cite[Proposition~1.5(c)]{BrGa}, the freedom of extending our infinitesimal
deformation to a flat deformation is given by the graded pieces of
$\mathbf{HH}^2(A^k,A^k)$ in higher degrees. These spaces are however
zero as $\mathbf{HH}^2(A^k,A^k)$ is one-dimensional and hence
is concentrated in the degree which produces the non-trivial
infinitesimal deformation.
\end{proof}

Consider now the second grading on $A^k$ (obtained by putting the
$a_i$'s in degree $1$ and the $b_i$'s in degree zero, respectively).
This grading is no longer positive in the sense of \cite{MOS}.
To distinguish the obtained graded algebra from the previous
graded algebra, we denote it by $\tilde{A}^{k}$. Similarly we define
a grading on $\hat{B}^k$ by putting all right arrows and end
loops in degree one and all left arrows in degree zero. Let $\tilde{B}^k$ be the resulting algebra.

\begin{proposition}\label{prop51}
Let $k\in\{2,3,\dots\}$. Then the algebra $\tilde{B}^k$ is the unique
(up to rescaling of the deformation parameter) non-trivial graded
flat one-parameter deformation of $\tilde{A}^{k}$.
\end{proposition}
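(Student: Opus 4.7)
The plan is to mimic the argument of Corollary~\ref{cor48}, adapting it from the positive grading to the mixed grading on $\tilde{A}^k$. The two ingredients I need are: (a) $\tilde{B}^k$ is genuinely a non-trivial graded flat one-parameter deformation of $\tilde{A}^k$; and (b) in the new grading, $\mathbf{HH}^2(A^k,A^k)$ is one-dimensional and concentrated in the single internal degree that matches the deformation parameter of $\tilde{B}^k$. Once these are in place, Braverman-Gaitsgory (\cite{BrGa}, Proposition~1.5(c)), exactly as used in Corollary~\ref{cor48}, forces uniqueness.

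For (a) I would first check that the central element $t(k)=\sum_{i=1}^k\bigl(x(i)-y(i)\bigr)$ from the proof of Theorem~\ref{thm29} is homogeneous of internal degree $1$ in the grading of $\tilde{B}^k$. For each internal vertex $i$, both shortest loops $x(i)$ and $y(i)$ use one right arrow and one left arrow, hence have degree $1+0=1$; at the two endpoints the role of one of the non-loop arrows is played by an end loop, which by definition lies in degree $1$, so again $x(i)$ and $y(i)$ have degree $1$. Consequently the quotient map $\tilde{B}^k\to\tilde{B}^k/(t(k))\cong\tilde{A}^k$ becomes an isomorphism of graded algebras, and the flatness and non-triviality of the deformation are inherited from Theorem~\ref{thm29}.

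For (b) I would compute the internal degree of the associative cocycle $\mu$ of Lemma~\ref{hochcoc} in the new grading (where $a_i$ is in degree $1$ and $b_i$ in degree $0$). A term-by-term inspection of \eqref{mu} shows that in every non-zero entry the output has degree exactly one less than the input: for instance $\mu(a_s,b_s)=\pm e_{s+1}$ sends degree $1$ to degree $0$, $\mu(b_1,a_1)=e_1$ sends $1$ to $0$, $\mu(a_sb_s,a_sb_s)=\pm a_sb_s$ sends $2$ to $1$, and so on. Hence $\mu$ is homogeneous of internal degree $-1$. Combined with $\dim\mathbf{HH}^2(A^k,A^k)=1$ from Proposition~\ref{HolmandCo}, this shows that the whole of $\mathbf{HH}^2(\tilde{A}^k,\tilde{A}^k)$ is concentrated in internal degree $-1$, precisely the piece classifying graded infinitesimal one-parameter deformations whose parameter has degree $1$.

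With (a) and (b) in place, the Corollary~\ref{cor48} argument applies verbatim: any non-trivial graded infinitesimal one-parameter deformation of $\tilde{A}^k$ is, up to rescaling of the parameter, the one given by the cocycle $\mu$ (equivalently, by $\tilde{B}^k$ reduced mod $t(k)^2$), and all higher graded obstructions live in further graded pieces of $\mathbf{HH}^2$, which vanish. The main obstacle I anticipate is the bookkeeping around the fact that the new grading is only non-negative and not strictly positive in the sense of \cite{MOS}, so one has to verify that the graded Hochschild cohomology and the Braverman-Gaitsgory machinery still describe graded deformations correctly in this setting; however, because the deformation $\tilde{B}^k$ is already in hand explicitly, the one-dimensionality of $\mathbf{HH}^2$ in the correct internal degree is enough to pin it down up to rescaling, completing the proof.
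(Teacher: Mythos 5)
Your proposal is correct and follows exactly the route the paper intends: the paper's own proof of Proposition~\ref{prop51} is literally ``mutatis mutandis that of Corollary~\ref{cor48}'', and your checks that $t(k)$ is homogeneous of internal degree $1$ in the grading of $\tilde{B}^k$ and that the cocycle $\mu$ of Lemma~\ref{hochcoc} is homogeneous of degree $-1$ for the mixed grading are precisely the details that the phrase ``mutatis mutandis'' leaves implicit. So this is the same argument, just written out in full.
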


\begin{proof}
Mutatis mutandis that of Corollary~\ref{cor48}.
\end{proof}

\begin{remark}\label{rem49}
{\rm
The graded algebra $\tilde{A}^{k}$ is a subalgebra of the algebra studied in
\cite{KhSe} in the context of Floer cohomology of Lagrangian
intersections in the symplectic manifolds which are Milnor fibres of
simple singularities of type $A_{k}$.
}
\end{remark}

\subsection{Proof of Theorem~\ref{thmmain}\eqref{thmmain.3}
and \eqref{thmmain.4}}\label{s4.6}

Let $\lambda$ be as in case~\eqref{case3} and $\xi$ be such that
$\hat{\mathcal{C}}_{\lambda,\xi}$ is nontrivial.  We will use the 
notation from Subsection~\ref{s3.3}. Theorem~\ref{thm21} gives the 
algebra $\hat{D}_{\lambda,\xi}$ the structure of a deformation of 
$A^{n-1}$ over $\mathbb{C}[[x_1,x_2,\dots,x_n]]$ and the algebra 
${D}_{\lambda,\xi}$ the structure of a deformation of $A^{n-1}$ over 
$\mathbb{C}[[x]]$.

\begin{lemma}\label{lem092}
There is a uniserial module $M\in {D}_{\lambda,\xi}$ of Loewy length
four such that the layers of the radical filtration of $M$ are 
$\hat{L}_1$, $\hat{L}_2$, $\hat{L}_1$ and $\hat{L}_2$.
\end{lemma}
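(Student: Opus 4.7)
I would construct $M$ as a quotient of the projective $\hat P_1(2):=\theta_1^{\mathrm{out}}\tilde R$ in $\mathcal{C}^{(2)}_{\lambda,\xi}$, where $\tilde R\in\mathcal{C}_{\lambda_1,\xi_1}$ is the unique indecomposable module of Loewy length $2$ (corresponding to $\mathbb{C}[[x]]/(x^2)$ by Theorem~\ref{thm6}). Using the biadjunctions of Subsection~\ref{s3.3} together with \eqref{eq19}, one computes $\theta_1^{\mathrm{on}}\hat P_1(2)\cong \tilde R\oplus\tilde R$; via the equivalence $\theta_2^{\mathrm{on}}\theta_1^{\mathrm{out}}$, $\theta_2^{\mathrm{on}}\hat P_1(2)$ is a length-$2$ module in $\mathcal{C}_{\lambda_2,\xi_2}$; and $\theta_i^{\mathrm{on}}\hat P_1(2)=0$ for $i\geq 3$. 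This yields composition factors $4\,\hat L_1+2\,\hat L_2$, so $\hat P_1(2)$ has total length $6$. Moreover, applying $\theta_1^{\mathrm{out}}$ to the non-split sequence $0\to L_1'\to\tilde R\to L_1'\to 0$ gives
\[
0\to\theta_1^{\mathrm{out}}L_1'\to\hat P_1(2)\to\theta_1^{\mathrm{out}}L_1'\to 0,
\]
with both outer terms the uniserial length-$3$ module $[\hat L_1,\hat L_2,\hat L_1]$ identified in the proof of Theorem~\ref{thm21}.

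Let $v$ generate the top $\hat L_1$ of $\hat P_1(2)$. I would set $M:=\hat P_1(2)/S$, where $S:=D_{\lambda,\xi}\cdot(\beta_1\alpha_1\cdot v)$ is the cyclic submodule generated by the image of $v$ under the basis element $\beta_1\alpha_1\in D_{\lambda,\xi}$ of Proposition~\ref{freeness}. The first task would be to verify that $S$ has composition length exactly $2$, with both composition factors of type $\hat L_1$; the second task is then a direct computation of the radical filtration of the quotient. Using the free $\hat R$-module basis of $D_{\lambda,\xi}$ from Proposition~\ref{freeness} and the explicit action of the generators $\alpha_i,\beta_i$ and the central deformation parameter $x\in\hat R$ on $\hat P_1(2)$, one reads off four consecutive simple layers $\hat L_1,\hat L_2,\hat L_1,\hat L_2$ (the top $\hat L_1$ from $v$, the next $\hat L_2$ from $\alpha_1\cdot v$, the next $\hat L_1$ from the combined action of $x$ and $\beta_1\alpha_1$ after $S$ has collapsed the ``extra'' copy of $\hat L_1$, and the socle $\hat L_2$ from a further application of $\alpha_1$). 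Since each Loewy layer is simple, $M$ is uniserial of Loewy length $4$, as claimed.

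The principal technical hurdle will be the length-$2$ property of $S$. Concretely this reduces to an identity of the form $(\beta_1\alpha_1)^2=c\cdot x\,\beta_1\alpha_1$ (for some nonzero $c\in\mathbb{C}$) in the action of $D_{\lambda,\xi}$ on $\hat P_1(2)$, equivalently that the submodule generated by $\beta_1\alpha_1\cdot v$ is spanned only by $\beta_1\alpha_1\cdot v$ and $x\cdot\beta_1\alpha_1\cdot v$ rather than being larger. Proving such an identity requires a careful analysis of the composition of adjunction morphisms together with the $\hat R$-centrality established in Lemma~\ref{Rcentral}; it is precisely the non-vanishing of $c$ that records the non-trivial infinitesimal deformation data and will be reused in Subsection~\ref{s4.6} as the decisive input to the proof of Theorem~\ref{thmmain}\eqref{thmmain.3}.
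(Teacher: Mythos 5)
Your proposal has a genuine gap, and it is essentially a circularity. The ``principal technical hurdle'' you defer -- an identity of the form $(\beta_1\alpha_1)^2=c\,x\,\beta_1\alpha_1$ with $c\neq 0$ in the action on $\hat P_1(2)$ -- is precisely the statement that the deformation ${D}_{\lambda,\xi}$ of $A^{n-1}$ is nontrivial modulo $\mathfrak{m}^2$. At this point in the paper that is not known: Theorem~\ref{thm21} and Proposition~\ref{freeness} only give the flat deformation structure, the $\hat R$-basis and the centrality of $\hat R$ (Lemma~\ref{Rcentral}), and all of these are equally satisfied by the \emph{trivial} deformation $A^{n-1}[[x]]$, over which no uniserial module with layers $\hat L_1,\hat L_2,\hat L_1,\hat L_2$ exists (the unique length-two submodule of $P_1(2)$ with two factors $\hat L_1$ then has a non-simple second Loewy layer in the quotient). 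So no amount of formal manipulation of adjunction morphisms and centrality can produce $c\neq 0$; some evaluation on an explicitly constructed module is needed. Indeed the paper runs the logic in the opposite direction: Lemma~\ref{lem092} is proved by a construction \emph{external} to the deformed algebra (either by citing \cite[Lemma~6.4]{GS}, or by a Gelfand--Zetlin tableaux realization in which one replaces the entries $m_1$ and $y_i$ by $m_1\mathrm{Id}_V+X$, $y_i\mathrm{Id}_V+X$ for a $2$-dimensional $V$ and nilpotent $X$, obtaining a uniserial module with layers $\hat L_2,\hat L_1,\hat L_2,\hat L_1$ and then dualizing), and only afterwards is this module used, in Lemma~\ref{lem091}, to prove the non-vanishing you are assuming.

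There is also a concrete error in the construction itself, independent of the circularity: your submodule $S=D_{\lambda,\xi}\cdot(\beta_1\alpha_1\cdot v)$ does not have length two with both factors $\hat L_1$. Whenever the deformation is nontrivial (which is exactly the regime in which your plan could work), one has $\alpha_1\beta_1\alpha_1\cdot v\neq 0$, and this is a vector of type $\hat L_2$ lying in $S$; generically $S$ has length three with factors $\hat L_1,\hat L_2,\hat L_1$, and the quotient $\hat P_1(2)/S$ is the uniserial length-three module, not the desired length-four one. The correct submodule to factor out is the unique length-two submodule all of whose composition factors are $\hat L_1$ (this is the module $N$ appearing in the paper's proof of Lemma~\ref{lem091}), which is not cyclic on $\beta_1\alpha_1\cdot v$. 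Even with that repair, the uniseriality of the quotient still hinges on the nontriviality input discussed above, so the approach cannot stand on its own as a proof of this lemma.
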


\begin{proof}
This follows from \cite[Lemma~6.4]{GS}. Alternatively one can construct 
this module using the Gelfand-Zetlin approach, described in \cite{Maz}.
Let us briefly outline the latter. Let $V$ be a two dimensional vector
space and $X$ be a nonzero nilpotent operator on $V$. 

As explained in Subsection~\ref{s1.6}, the element $\lambda$ corresponds
to some  $(m_1,m_2,m_3,\dots,m_{n})\in\mathbb{C}^n$ such that 
$m_2-m_1$, $m_1-m_3$, $m_3-m_4$,\dots, $m_{n-1}-m_n\in \mathbb{N}$.
By Proposition~\ref{prop3}, we may arbitrarily change $\xi$, so we
choose $x_1,x_2,\dots,x_{n-1}\in \mathbb{C}$ such 
that  $m_1-x_i\not\in\mathbb{Z}$ and $x_i-x_j\not\in\mathbb{Z}$ for all 
$i,j$. Consider the set $T(\mathbf{m},\mathbf{x})$ and the
corresponding module $M(\mathbf{m},\mathbf{x})$ as 
defined in Subsection~\ref{s1.6}. Note that we are in case \eqref{case3}
and $m_2-m_1,m_1-m_3\in \mathbb{N}$. Hence, by construction, the 
module $M(\mathbf{m},\mathbf{x})$ is a uniserial module of length 
two with subquotients of the radical filtration being 
$\hat{L}_2$, $\hat{L}_1$. Now we would like to construct a
nontrivial selfextension of this module module
using $X$.

For every $t\in T(\mathbf{m},\mathbf{x})$ let $V_t$ denote a copy 
of $V$. Consider the vector space 
$M=\oplus_{t\in T(\mathbf{m},\mathbf{x})}V_t$. For $i\in\{1,2,\dots,n-1\}$
define on $M$ the action of $e_{i,i+1}$ and $e_{i+1,i}$ by the
classical Gelfand-Zetlin formulae (\cite{DFO}, \cite{Maz}) in which we 
treat all entries of our tableaux as the corresponding scalar operators 
on $V$ with the exception of $m_1$, which should be understood as 
$m_1\mathrm{Id}_V+X$, and $y_i$, $i=1,2,\dots,n-1$, which should 
be understood as $y_i\mathrm{Id}_V+X$. Similarly to the proof 
of Lemma~\ref{lem7} one 
shows that this defines on $M$ the structure of a $\mathfrak{g}$-module 
for any $X$. It is easy to check that this module is uniserial 
of length four with subquotients of the radical filtration being 
$\hat{L}_2$, $\hat{L}_1$, $\hat{L}_2$, $\hat{L}_1$. Taking the 
restricted dual, we obtain a module which satisfies all the 
requirements of the lemma.
\end{proof}

\begin{lemma}\label{lem091}
Both $\hat{D}_{\lambda,\xi}$ and ${D}_{\lambda,\xi}$ are nontrivial
deformations when reduced modulo $\mathfrak{m}^2$.
\end{lemma}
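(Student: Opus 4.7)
The plan is to use the uniserial module $M$ from Lemma~\ref{lem092}, whose radical layers alternate between $\hat{L}_1$ and $\hat{L}_2$, as a distinguishing feature that rules out the trivial deformation.

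First I would check that $M$ actually lives in the second-order subcategory $\hat{\mathcal{C}}_{\lambda,\xi}^{(2)}$, hence is a module over the finite-dimensional algebra $\hat{D}_{\lambda,\xi}/\mathfrak{m}^2\hat{D}_{\lambda,\xi}$, and similarly for $\mathcal{C}_{\lambda,\xi}^{(2)}$. The construction in the proof of Lemma~\ref{lem092} rests on a two-dimensional space $V$ carrying a nilpotent operator $X$ with $X^2=0$, and under each translation $\theta_i^{\mathrm{on}}$ to a singular wall the image becomes a $\mathbb{C}[[x]]$-module on which the deformation parameter acts as $X$ via the identification of Theorem~\ref{thm6}, hence of Loewy length at most two. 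This places $M$ in $\hat{\mathcal{C}}_{\lambda,\xi}^{(2)}$ and analogously in $\mathcal{C}_{\lambda,\xi}^{(2)}$.

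Now suppose for contradiction that $\hat{D}_{\lambda,\xi}/\mathfrak{m}^2\hat{D}_{\lambda,\xi}$ is, as a deformation of $A^{n-1}$, isomorphic to the trivial deformation $E:=A^{n-1}\otimes R$ with $R:=\mathbb{C}[x_1,\ldots,x_n]/\mathfrak{m}^2$. A deformation isomorphism reduces to the identity modulo $\mathfrak{m}$, so it preserves the labelling of simples. In $E$ the projective cover of $L_1$ is $P_1\otimes R$, where $P_1$ is the uniserial $A^{n-1}$-projective with layers $L_1,L_2,L_1$. Letting $J$ denote the Jacobson radical of $E$, a direct computation using $\mathfrak{m}^2=0$ and $\operatorname{rad}^3(A^{n-1})=0$ gives $J^3(P_1\otimes R)=\operatorname{soc}(P_1)\otimes\mathfrak{m}\cong L_1\otimes\mathbb{C}^{n}\cong L_1^{\,n}$, which is purely $L_1$-isotypic. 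Hence the fourth (and bottom) Loewy layer of $P_1\otimes R$ consists only of copies of $L_1$, and any uniserial quotient of Loewy length four must have its bottom layer isomorphic to $L_1$. But $M$ is exactly such a quotient, with bottom layer $\hat{L}_2\not\cong\hat{L}_1$, which is the required contradiction. For the one-parameter case we apply the same argument with $R$ replaced by $\mathbb{C}[x]/(x^2)$: the fourth Loewy layer of the projective cover shrinks to a single copy of $L_1$, but remains $L_1$-isotypic.

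The most delicate step is the first one, namely making rigorous the identification of the nilpotent $X$ appearing in the tableau construction of $M$ with the action of an element of $\mathfrak{m}$ under the isomorphism $\hat{D}_{\lambda_1,\xi_1}\cong\mathbb{C}[[x_1,\ldots,x_n]]$ from Theorem~\ref{thm6}, and thereby verifying $M\in\hat{\mathcal{C}}_{\lambda,\xi}^{(2)}$. Once this is in place, the remainder reduces to routine Loewy-layer computations for the trivial deformation.
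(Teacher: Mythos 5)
Your argument is essentially correct, but it reaches the contradiction by a different route than the paper. Both proofs hinge on the uniserial module $M$ of Lemma~\ref{lem092}; the paper, however, uses $M$ to carry out an element computation: it observes that in any deformation that is trivial modulo $\mathfrak{m}^2$ the product $(\beta_1\alpha_1)^2$ of lifts of radical elements must vanish modulo $\mathfrak{m}^2$ (since $\operatorname{rad}^3A^{n-1}=0$), and then shows $(\beta_1\alpha_1)^2\neq 0$ by chasing images through the projectives $P_1(2)$, $P_2(2)$ of $\mathcal{C}^{(2)}_{\lambda,\xi}$, with $M$ used to identify $\alpha_1P_1(2)$. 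You instead argue structurally: in the trivial deformation reduced modulo $\mathfrak{m}^2$ the fourth Loewy layer of the projective cover of $L_1$ is $\operatorname{soc}(P_1)\otimes\mathfrak{m}$, hence $L_1$-isotypic, so no uniserial quotient of Loewy length four with socle $\hat{L}_2$ can exist, while $M$ is exactly such a quotient; your computation of $J^3(P_1\otimes R)$ is correct because $\operatorname{rad}^3(A^{n-1})=0$ and $\mathfrak{m}^2=0$ in $R$, and a deformation isomorphism is the identity modulo $\mathfrak{m}$, so it does preserve the labelling of simples. Your version is shorter and avoids any explicit morphism chase; the paper's version additionally pins down a concrete element ($(\beta_1\alpha_1)^2$, i.e.\ the direction $b_1\otimes a_1\mapsto e_1$ of the cocycle of Lemma~\ref{hochcoc}) witnessing the nontriviality, which fits with the explicit description of the deformation later used via Corollary~\ref{cordef}. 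Two remarks: the step you flag as delicate is in fact easier than your sketch suggests --- since $\theta_i^{\mathrm{on}}$ kills all $\hat{L}_j$ with $j\neq i$, the translate $\theta_i^{\mathrm{on}}M$ has length, hence Loewy length, at most two, so $M\in\hat{\mathcal{C}}^{(2)}_{\lambda,\xi}$ (and in $\mathcal{C}^{(2)}_{\lambda,\xi}$) directly from the definition, without unwinding the identification of Theorem~\ref{thm6}; what remains is only the identification of $\mathcal{C}^{(2)}_{\lambda,\xi}$ with modules over $D_{\lambda,\xi}/\mathfrak{m}^2D_{\lambda,\xi}$, which the paper's own proof uses implicitly as well. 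Finally, your argument needs $n>2$ (there is no $\hat{L}_2$, and no Lemma~\ref{lem092}, when $n=2$); the paper disposes of $n=2$ separately by quoting the known description of $D_{\lambda,\xi}$, and you should do likewise or note that the surrounding results only require $n>2$.
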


\begin{proof}
As $\mathbb{C}[[x]]$ is a quotient of $\mathbb{C}[[x_1,x_2,\dots,x_n]]$, 
it is enough to prove the statement for the
smaller algebra ${D}_{\lambda,\xi}$. In case $n=2$ the claim 
follows directly from the well-known description of ${D}_{\lambda,\xi}$ 
(see \cite[Chapter~3]{Maz2}). Hence we may assume $n>2$. One could
observe that the claim follows from the main result of \cite{GS}
and our Theorem~\ref{thm29}. However, we would like to give an
independent argument based on our description
of ${D}_{\lambda,\xi}$ as a deformation, given by Theorem~\ref{thm21}.

Consider the element $x=\beta_1\alpha_1\in {D}_{\lambda_1,\xi_1}$.
We know that the image of $x$ in $A^{n-1}$ (that is, when we further
reduce modulo $\mathfrak{m}$) is a nonzero element $\overline{x}$
satisfying $\overline{x}^2=0$. Hence, to prove the statement of the 
lemma it is enough to show that $x^2$ is nonzero when reduced modulo
$\mathfrak{m}^2$.  For this it is enough to show that
$x^2$ does not annihilate some  module from 
${\mathcal{C}}^{(2)}_{\lambda,\xi}$.
This is a straightforward computation, which we describe below.

Set $R:=\mathcal{C}_{\lambda_1,\xi_1}\cong\mathbb{C}[[x]]$.
The projective module $R/\mathfrak{m}^2$ 
has length two with simple top and simple socle isomorphic to $L_1$. 
In particular, it is also injective in $\mathcal{C}^{(2)}_{\lambda_1,\xi_1}$. 
Applying the exact selfadjoint functor $\mathrm{F}_1$ to the short 
exact sequence
\begin{displaymath}
0\to L_1\to R/\mathfrak{m}^2\to L_1\to 0 
\end{displaymath}
we get the exact sequence
\begin{displaymath}
0\to M_1\to P_1(2)\to M_2\to 0, 
\end{displaymath}
where $P_1(2)$ is the projective cover of $\hat{L}_1$ and
$M_1\cong M_2\cong \mathrm{F}_1 L_1$. The module $\mathrm{F}_1 L_1$
is uniserial of length three, with simple top and simple socle isomorphic 
to $\hat{L}_1$ and the intermediate layer isomorphic to $\hat{L}_2$.
We assert that $x^2$ does not annihilate $P_1(2)$.

Similarly to the above one shows that for the projective-injective 
module $P_2(2)$ there is an exact sequence
\begin{displaymath}
0\to N_1\to P_2(2)\to N_2\to 0, 
\end{displaymath}
where $N_1\cong N_2$ has Loewy length three, with simple top and simple
socle isomorphic to $\hat{L}_2$, and the middle layer of the radical
(=socle) filtration isomorphic to $\hat{L}_1\oplus \hat{L}_3$
(the summand $\hat{L}_3$ should be omitted in case $n=3$).

We want to show that the composition
\begin{equation}\label{eq095}
P_1(2)\overset{\alpha_1}{\to}P_2(2)\overset{\beta_1}{\to}
P_1(2)\overset{\alpha_1}{\to}P_2(2)\overset{\beta_1}{\to}P_1(2)
\end{equation}
is nonzero. Note that the kernels of the morphisms $\alpha_1$ and 
$\beta_1$ contain only simple subquotients of the form $\hat{L}_1$ 
and $\hat{L}_2$, respectively (since we know that this is the case when 
we reduce modulo $\mathfrak{m}$).

Since $P_1(2)$ has a filtration with subquotients $M_1$ and $M_2$,
it has at most one submodule $N$ of length two,
both composition subquotients of which are isomorphic to $\hat{L}_1$.
At the same time the module $M$ from Lemma~\ref{lem092} must be
a quotient of $P_1(2)$. Therefore $N$ as above exists, is unique,
and $\alpha_1P_1(2)\cong P_1(2)/N\cong X$ is uniserial with 
layers of the radical filtration as described in Lemma~\ref{lem092}.

As the kernel of $\beta_1$ contains only 
simple subquotients of the form $\hat{L}_2$, it follows that 
$Y=\beta_1 X$ is uniserial of length three with the layers of the
radical filtration being $\hat{L}_1$, $\hat{L}_2$, $\hat{L}_1$.
As the kernel of $\alpha_1$ contains only 
simple subquotients of the form $\hat{L}_1$, it follows that 
$Z=\alpha_1 Y$ is uniserial of length two with the layers of the
radical filtration being $\hat{L}_1$, $\hat{L}_2$. 
As the kernel of $\beta_1$ contains only 
simple subquotients of the form $\hat{L}_2$, it follows that 
$\beta_1 Z\cong \hat{L}_1$. Therefore
$\alpha_1\beta_1\alpha_1\beta_1\neq 0$ and the claim of the lemma follows.
\end{proof}

\begin{proof}[Proof of Theorem~\ref{thmmain}\eqref{thmmain.3}
and Theorem~\ref{thmmain}\eqref{thmmain.4}.]
By  Theorem~\ref{thm21}, the algebra $D_{\lambda,\xi}$ is a deformation
of $A^{n-1}$ over $\mathbb{C}[[x]]$. By Lemma~\ref{lem091},
this deformation is nontrivial when reduced modulo $\mathfrak{m}^2$.
By Theorem~\ref{thm29}, the algebra described in 
Theorem~\ref{thmmain}\eqref{thmmain.3} (resp. \eqref{thmmain.4}) 
is also a deformation of $A^{n-1}$ over $\mathbb{C}[[x]]$
(resp. $\mathbb{C}[[x_1,x_2,\dots,x_n]]$), nontrivial 
when reduced modulo $\mathfrak{m}^2$. Hence the claim of 
Theorem~\ref{thmmain}\eqref{thmmain.3} (resp. \eqref{thmmain.4}) 
follows from Corollary~\ref{cordef}.
\end{proof}

\vspace{0.5cm}

\noindent
V.M.: Department of Mathematics, Uppsala University, SE 471 06,
Uppsala, SWEDEN, e-mail: {\tt mazor\symbol{64}math.uu.se}
\vspace{0.2cm}

\noindent
C.S.: Mathematik Zentrum, Universit{\"a}t Bonn,
Endenicher Allee 60, D-53115, Bonn, GERMANY,
e-mail: {\tt stroppel\symbol{64}uni-bonn.de}
\vspace{0.2cm}

\end{document}